\newcommand{\R}{\mathbb{R}}
\newcommand{\N}{\mathbb{N}}
\definecolor{lightblue}{rgb}{.90,.95,1}
\newcommand{\tran}{^{\mbox{\scriptsize  \bf{T}}}}  
\newcommand{\bec}[1]{\bar{\vec{#1}}}
\newcommand{\dist}{\texttt{dist}}
\newcommand{\sign}{\texttt{sign}}
\newcommand{\Span}{\texttt{Span}}
\newcommand{\ang}{\texttt{ang}}
\newtheorem{defin}{Definition}
\newtheorem{theorem}{Theorem}
\newtheorem{lemma}{Lemma}
\newtheorem{corollary}{Corollary}
\newtheorem{remark}{Remark}
\newtheorem{example}{Example}
\newcommand{\be}{\begin{equation}}
\newcommand{\ee}{\end{equation}}
\newcommand{\ba}{\begin{array}}
\newcommand{\ea}{\end{array}}
\newcommand{\bea}{\begin{eqnarray}}
\newcommand{\eea}{\end{eqnarray}}
\newcommand{\vbar}{\raisebox{.17ex}{\rule{.04em}{1.35ex}}}
\newcommand{\vbarind}{\raisebox{.01ex}{\rule{.04em}{1.1ex}}}
\newcommand{\D}{\ifmmode {\rm I}\hspace{-.2em}{\rm D} \else ${\rm I}\hspace{-.2em}{\rm D}$ \fi}
\newcommand{\T}{\ifmmode {\rm I}\hspace{-.2em}{\rm T} \else ${\rm I}\hspace{-.2em}{\rm T}$ \fi}
\newcommand{\B}{\ifmmode {\rm I}\hspace{-.2em}{\rm B} \else \mbox{${\rm I}\hspace{-.2em}{\rm B}$} \fi}
\newcommand{\Hil}{\ifmmode {\rm I}\hspace{-.2em}{\rm H} \else \mbox{${\rm I}\hspace{-.2em}{\rm H}$} \fi}
\newcommand{\C}{\ifmmode \hspace{.2em}\vbar\hspace{-.31em}{\rm C} \else \mbox{$\hspace{.2em}\vbar\hspace{-.31em}{\rm C}$} \fi}
\newcommand{\Cind}{\ifmmode \hspace{.2em}\vbarind\hspace{-.25em}{\rm C} \else \mbox{$\hspace{.2em}\vbarind\hspace{-.25em}{\rm C}$} \fi}
\newcommand{\Q}{\ifmmode \hspace{.2em}\vbar\hspace{-.31em}{\rm Q} \else \mbox{$\hspace{.2em}\vbar\hspace{-.31em}{\rm Q}$} \fi}
\newcommand{\Z}{\ifmmode {\rm Z}\hspace{-.28em}{\rm Z} \else ${\rm Z}\hspace{-.38em}{\rm Z}$ \fi}
\renewcommand{\vec}[1]{{\bf{#1}}}     
\title{\LARGE \bf
Convergence of Limited Communications Gradient Methods 
}
\author{Sindri Magn\'{u}sson,   Chinwendu Enyioha, Na Li, Carlo Fischione, and Vahid Tarokh
 \thanks{
This work was supported by the VR Chromos Project, NSF 1548204, 1608509, and Career 1553407, and ARPA-E under the NODES program.
}
\thanks{S. Magn\'{u}sson and C.~Fischione are with the Electrical Engineering School, KTH Royal Institute of Technology, Stockholm, Sweden. (e-mail: sindrim@kth.se; carlofi@kth.se) }%
\thanks{C. Enyioha, N. Li, and V. Tarokh are with the School of Engineering and Applied Sciences, Harvard University, Cambridge, MA~USA. E-mail:{ sindrim@kth.se}. (email: cenyioha@seas.harvard.edu; nali@seas.harvard.edu; vahid@seas.harvard.edu)
}%
}
\begin{document}

\maketitle

\begin{abstract}
Distributed optimization increasingly plays a central role in economical and sustainable operation of cyber-physical systems.  Nevertheless, the complete potential of the technology has not yet been fully exploited in practice due to communication limitations posed by the real-world infrastructures.  This work investigates fundamental properties of distributed optimization based on  gradient methods, where gradient information is communicated using limited number of bits.  In particular, a general class of quantized gradient methods are studied where the gradient direction is approximated by a finite quantization set.  Sufficient and necessary conditions are provided on such a  quantization set to guarantee that the methods  minimize any  convex objective function with Lipschitz continuous gradient and a nonempty and bounded set of optimizers.  A lower bound on the cardinality of the quantization set is provided, along with specific examples of minimal quantizations.  Convergence rate results are established that connect the fineness of the quantization and the number of iterations needed to reach a predefined solution accuracy.  Generalizations of the results to a relevant class of  constrained problems using projections are considered.  Finally, the results are illustrated by simulations of practical systems.
\end{abstract}

 \section{Introduction}

  Recent advances in  distributed optimization have enabled more economical and sustainable control and operation  of cyber-physical systems. 
   However, these systems usually assume the availability of high performing communication infrastructures, which is often not practically possible.    
   For example, although large scale cyber-physical systems such as power networks are equipped with a natural communication infrastructure given by  the power lines~\cite{Galli2011grid}, such a communication network has a limited bandwidth. 
    Instead,  research efforts in distributed operation of power networks usually assume high data rates and low latency 
      communication technologies that are, unfortunately, not affordable or available today. 
            Another example is given by wireless sensor networks~\cite{Fischione_2011}, where efficient usage of communication plays a central role. 
 In fact, these networks  are powered by battery sources for communication over wireless links; hence, they are constrained in how much transmission they engage.  
These communication limitations are especially harsh in underwater networks, where acoustic channels are generally used, which have strong bandwidth limits~ \cite{partan2007survey}.
    Light communications  are also essential  in coordinating data networks~\cite{bertsekas1992data},
  where the control channels that support the data channels are obviously allocated limited bandwidth.  
 Another relevant example is within the emerging technology of extremely low latency networking or tactile internet~\cite{Duris_2016}, where information, especially for real time control applications, will be transmitted with very low latencies over wireless and wired networks.  
 However, this comes at a cost of using short packets containing limited information.

In all the cyber-physical systems mentioned above, distributed optimization plays a central role. These systems are networks of nodes whose operations have to be optimized by local decisions yet the coordination information can only go through constrained communication channels. In this paper, we restrict ourselves to one of the most prominent distributed optimization methods, decomposition based on the gradient method, and we investigate  the fundamental properties of such a method in terms of coordination limitations and optimality.

\subsection{Related Literature}

  Decomposition methods in optimization have been widely investigated  in wired/wireless communication~\cite{Kelly_1998,low_1999,Palomar_2006,Palomar_2007},   power networks~\cite{li2011optimal,chen2010two}, and wireless sensor networks~\cite{Madan_2006}, among others.
   These methods are typically based on communicating gradient information from a set of source nodes to {users,} which then solve a simple local subproblem. 
    The procedure can be performed using i) one-way communication where the source nodes estimate the gradient using available information~\cite{low_1999,Zhao_2014,DallAnese_2016} or ii) two-way communication where users and sources coordinate to evaluate the gradient.
   We investigate the performance of such methods using one-way communication where the number of bits per coordination step are limited.

 Bandwidth constrained optimization has  already received attention in the literature~\cite{Lue_1987,Rabbat_2005,Nedic_2008_cdc,pu2015quantization,yi2014quantized,Herdtner_2000}. 
 Initial studies are found in~\cite{Lue_1987}, where Tsitsiklis and Luo provide  lower bounds on the number of bits that two  processors need to communicate to (approximately) minimize the sum of two convex functions each of which is only accessed by one processor. 
 More recently, the authors of~\cite{Rabbat_2005} consider a variant of incremental gradient methods~\cite{Nedic_2001} over networks where each node projects its iterate to a grid before sending the iterate to the next node. 
 Similar quantization ideas are considered~\cite{Nedic_2008_cdc,pu2015quantization,yi2014quantized} in the context of {consensus-type} subgradient methods~\cite{Nedic_2009}.
 The work in~\cite{Herdtner_2000} studies the convergence of standard interference function methods for power control in cellular wireless systems where base stations send binary signals to the users optimizing the transmit radio power. 
 Those papers consider only the original primal optimization problem,  without introducing its dual problem, where  quantized primal variables are communicated.   
 However,  many network and resource sharing/allocation optimization problems are naturally decomposed using duality theory, where it is  the dual \emph{gradients} that are communicated.  
 This motivates our studies of limited communication gradient methods.

      \subsection{Statement of Contributions}

      The main contribution of this paper is to investigate the convergence of gradient methods where gradients are communicated using limited bandwidth.  
   We first   consider gradient methods where each coordinate of the gradient is communicated using only one-bit per iteration.     
   This setup is motivated by dual decomposition applications where a single entity maintains each dual variable, e.g., in the TCP control in~\cite{low_1999} each dual variable is maintained by one flow line.  
 Since dual problems are either unconstrained or constrained to the positive orthant $\R_+^N$,\footnote{Depending on whether the primal problem has inequality or equality constraints.} we consider both unconstrained problems and problems constrained to $\R_+^N$.   
 We prove that when the step size $\gamma >0 $ of the gradient method is fixed, then the iterates converge  approximately to the set of optimal solutions within some $\epsilon>0$ accuracy in a finite number of iterations, where $\epsilon$ tends to 0 as $\gamma$ converges to 0. 
 Moreover, we provide an upper complexity bound on the number of iterations needed to reach any $\epsilon>0$ accuracy. 
 This upper bound grows proportionally to $1/\epsilon^2$ as $\epsilon$ goes to zero for unconstrained problems,  and proportionally to $1/\epsilon^4$ for problems constrained by $\R_+^N$.  
 We also prove that if the step-sizes $\gamma(t)$ are non-summable and converge to 0, then the iterates 
  converge to the set of optimal solutions.

 The second contribution of the paper is to investigate the convergence of more general class of quantized gradient methods (QGM), where the gradient direction is quantized at every iteration.  
 We start by identifying  necessary  and sufficient conditions on the quantization 
 so that  the QGMs can minimize any convex objective function with Lipschitz continuous gradients and a nonempty and bounded set of optimizers. 
 We show that the minimal quantizations that satisfy these conditions have the cardinality $N+1$, where $N$ is the problem dimension. 
  We prove that for fixed  step-sizes $\gamma>0$ the iterates converge  approximately to the set of optimal solutions within some $\epsilon$-accuracy, where $\epsilon$ converges to 0 as $\gamma$ converges to 0.
   We provide an upper complexity bound on the number of iterations $T$ needed to reach any solution accuracy $\epsilon>0$. This upper bound depends on the fineness of the quantization. 
    Moreover, we show that the solution accuracy $\epsilon>0$ converges to zero at a rate proportional to $1/\sqrt{T}$ or $1/\sqrt{b}$ where $T$ and $b$ are the numbers of iterations and communicated bits, respectively. 
  We show that when the step-sizes $\gamma(t)$ are non-summable and converge to zero then the iterates asymptotically converge to the set of optimal solutions.

  A conference version of this work including parts of Section~\ref{sec:QGDM} and Section~\ref{sec:Convergence} appeared in~\cite{Magnusson_2016}, but without most of the proofs. 
   The rest of the work is presented here for the first time.
   Our previous papers~\cite{Enyioha_2016,Magnusson_2016_2} consider similar resource allocation problems as in this paper without communication constraints.

    \subsection{Notation}
     Vectors and matrices are represented by boldface lower and upper case letters, respectively.
   The set of real, positive real, and natural numbers, are denoted by $\R$, $\R_+$, and $\N$, respectively. 
 The set of real and positive $n$ vectors and $n{\times} m$ matrices are denoted by $\R^n$, $\R_+^n$, and $\R^{n\times m}$, respectively.
  Other sets are represented by calligraphy letters.
  We denote by $\mathcal{S}^{N-1}$, $\mathcal{S}^{N-1}(\vec{x},R)$, and $\mathcal{B}^N(\vec{x},R)$, respectively, the unit sphere in $\R^N$ and the sphere and open ball centred at $\vec{x}$ with radius $R$ in $\R^N$. 
 The superscript $(\cdot)\tran$ stands for transpose. 
 We let $[\vec{x}]_{\mathcal{X}}$ and $\lceil\vec{x}\rceil_+$ denote the projection of $\vec{x}$ to the sets $\mathcal{X}$ and $\R_+$.  
  $||\cdot||$ denotes the $2$-norm. $\nabla f$ is the gradient of $f$. 
 The distance between a vector $\vec{x}\in\R^N$ and a  set $\mathcal{X}\subseteq \R^N$ is denoted by $\dist(\vec{x},\mathcal{X})=\inf_{\vec{v}\in \mathcal{X}} ||\vec{v}-\vec{x}||$.

 \section{Preliminaries and Motivating Examples } \label{sec:Related Background}

  In this paper we consider optimization problems of the form
 \begin{equation}  \label{main_problem}
\begin{aligned} 
    & \underset{\vec{x}\in \R^N}{\text{minimize}}  
    & & f(\vec{x}), \\
    & \text{subject to} 
    & & \vec{x}\in \mathcal{X},
\end{aligned}
\end{equation}
 where $f{:}\R^N\rightarrow \R$. 
 We denote by $f^{\star}$ and $\mathcal{X}^{\star}\subseteq \mathcal{X}$ the optimal value and the set of optimizers to  Problem~\eqref{main_problem}, respectively. 
 We consider the following class of optimization problems: 
 \begin{defin} \label{assump:problem_feasible}
 Let $\mathcal{F}_L(\mathcal{X})$ denote the set of optimization problems of the form of Equation~\eqref{main_problem} where the function $f$ is convex and differentiable  with $L$-Lipschitz continuous gradient, $\mathcal{X}$ is closed and convex set, and the optimal  solution set $\mathcal{X}^{\star}$ is nonempty and bounded. 
  We write $f\in \mathcal{F}_L(\mathcal{X})$ to indicate that the optimization Problem~\eqref{main_problem} with the objective function $f$ in the class $\mathcal{F}_L(\mathcal{X})$.
 \end{defin}

\noindent For $f\in\mathcal{F}_L(\mathcal{X})$, it is well known that the gradient method
  \begin{align} \label{gradient_alg}
     \vec{x}(t{+}1) = \left[ \vec{x}(t)- \gamma(t) \nabla f(\vec{x}(t))\right]_{\mathcal{X}},
  \end{align}
  converges to $\mathcal{X}^{\star}$ under appropriate step-size rules~\cite{nonlinear_bertsekas}.
 When only the gradient \emph{direction} is known, the above iterates become 
  \begin{align} \label{gradient_dir_alg}
     \vec{x}(t{+}1) = \left[\vec{x}(t)- \gamma(t)\frac{ \nabla f(\vec{x}(t))}{||\nabla f(\vec{x}(t))||}\right]_{\mathcal{X}},
  \end{align}
  where we set $ \vec{x}(t{+}1) =\vec{x}(t)$ if $||\nabla f(\vec{x}(t))||=0$. 
For appropriate diminishing step-size rules, the iterates   converge to $\mathcal{X}^{\star}$ and for fixed step-size,  the stopping condition $f(\vec{x}(t))-f^{\star}<\epsilon$ can be achieved for any $\epsilon>0$~\cite{Book_Shor_1985}.

 Problems on the form of Equation~\eqref{main_problem} often appear as dual problems used to decompose optimization problems with coupling constraints~\cite{Palomar_2006,Palomar_2007}.  
 There, a distributed solution approach is achieved by solving the dual problem using gradient methods as given in Equations~\eqref{gradient_alg} and~\eqref{gradient_dir_alg}.  
 The dual gradient $\nabla(f(\vec{x}(t))$ can often be measured over the course of the algorithm, as it is the constraint violation in the primal problem, given dual variable $\vec{x}(t)$ (see, e.g.,~\cite{low_1999,Zhao_2014,DallAnese_2016}).  
 To perform the gradient update in Equations~\eqref{gradient_alg} and~\eqref{gradient_dir_alg} the gradient or gradient direction must be communicated, as illustrated in the following examples. 
 However, since communication resources are scarce  in many networks,  we consider another variant of the gradient method in~\eqref{gradient_alg}.
 That is
  \begin{align*} 
     \vec{x}(t{+}1) = \left[ \vec{x}(t)- \gamma(t)\vec{d}(t)\right]_{\mathcal{X}},
  \end{align*}  
  where $\vec{d}(t)$ is a quantized gradient direction coded using limited number of bits. 
 Before introducing the details of our quantized methods we  provide some application examples.

 \subsection{TCP Flow Control}
  \label{subsec:Application_RA}

 Consider a communication network with $N$ undirected links and $S$ data sources.  
 Let  $\mathcal{L}$ and $\mathcal{S}$ denote the ordered sets $\{1,\ldots, N\}$ and $\{1,\ldots,S\}$. 
 Denote the capacity of  link $l\in \mathcal{L}$ by $c_l>0$ and the transmission rate of source $s\in \mathcal{S}$ by $q_s \in [m_s,M_s]$, where $m_s$ and $M_s$ are upper and lower bound on the source.  
  Source $s\in \mathcal{S}$ has utility function $U_s:[m_s,M_s]\longrightarrow \R$.
  The data from source $s\in\mathcal{S}$ flows through a path consisting of links $\mathcal{L}_s \subseteq \mathcal{L}$ to its destination. We denote by $\mathcal{S}_l\subseteq \mathcal{S}$ the sources that use link $l\in \mathcal{L}$, i.e.,  $\mathcal{S}_l:= \{s\in \mathcal{S} | l \in \mathcal{L}_s\}$.
 Then the TCP flow control is to find data rates $q_s$, $s\in \mathcal{S}$, that solve the following optimization problem~\cite{low_1999,Wei_2013,Wei_2013_2,Beck_2014}
\begin{equation}  \label{RA}  
\begin{aligned} 
    & \underset{ q_1,\ldots, q_S}{\text{maximize}} 
    & & \sum_{s=1}^S U_s( q_s)   \\ 
    & \text{subject to}       
   & & \sum_{s\in \mathcal{S}_l} q_s \leq c_l, & \text{ for } l=1,\ldots, N   \\
   & & &    q_s \in [m_s,M_s], &\text{ for } s=1,\ldots, S. 
\end{aligned} 
\end{equation}
  For notational ease, we write $\vec{q}=(q_s)_{s\in \mathcal{S}}$, $\mathcal{Q}= \prod_{s\in \mathcal{S}} [m_s,M_s]$, 
  $\vec{c}=(c_l)_{l\in \mathcal{L}}$, and $\vec{A}\in \R^{N \times S}$ where 
  \begin{align}  \label{eq:TCPcontrol_A_matrix}
      \vec{A}_{ls}=
   \begin{cases}
    1 & \text{ if } l\in \mathcal{S}_s, \\
    0 & \text{ otherwise. }
   \end{cases}      
  \end{align}
  The dual problem of~\eqref{RA} is of the form~\eqref{main_problem} where $\mathcal{X}=\R_+^N$
 and the dual function $f:\R^N\rightarrow \R$ is  given by
  \begin{align*}
      f(\vec{x})=& \underset{\vec{q}\in \mathcal{Q}}{\text{maximize}}~  L(\vec{q},\vec{x}) =  L(\vec{q}(\vec{x}),\vec{x}),
  \end{align*}
  where 
 \begin{align}
             L(\vec{q}, \vec{x}) &=   \sum_{i=1}^M U_i( q_i) -\vec{x}\tran ( \vec{A}\vec{q} -\vec{c}),  \notag \\
         q_i(\vec{x}) &= \underset{ q_i\in [m_i,M_i]}{\text{argmax}}~ U_i(q_i)-  q_i\sum_{l\in \mathcal{L}_s} x_l. \label{eq:userSubproblem}
 \end{align}
\noindent 
 The dual gradient is given by $\nabla f(\vec{x})=\vec{c}-\vec{A} \vec{q}(\vec{x})$. 
 The dual gradient is bounded since the set $\mathcal{Q}$ is compact.
 Moreover, the set of optimal dual variables is bounded from Lemma 1 in~\cite{nedic2009approximate} 
 and the dual gradient $\nabla f(\cdot)$ is $L$-Lipschitz continuous from Lemma 3 in~\cite{low_1999},  provided that $U_i(\cdot)$ are strongly concave.  
 Therefore, the dual iterates $\vec{x}(t)$ in Equation~\eqref{gradient_alg} or~\eqref{gradient_dir_alg} and the associated primal iterates $\vec{q}(t)=\vec{q}(\vec{x}(t))$ converge to the optimal primal/dual solution of the optimization Problem~\eqref{RA}, provided that $\gamma(t)$ are chosen properly.

Dual gradient methods are desirable because they entail distributed solution to Problem~\eqref{RA} since Subproblems~\eqref{eq:userSubproblem} can be solved without any   coordination between the sources $\mathcal{S}$.  
  Moreover, the gradient component $\nabla_l f(\vec{x})=c_l- \sum_{s\in \mathcal{L}(l)} q_s(x_l)$ can often be measured at the data link $l$ since it is simply the difference between the link capacity, $c_l$, and the data transferred through the link~\cite{low_1999}. 
Therefore, the algorithm can be accomplished using one-way communication where each iteration $t$ consists of the following steps: 
 (i) the links broadcast $\vec{x}(t)$ to the sources 
 (ii) the sources solve the local Subproblem~\eqref{eq:userSubproblem} and then transfer the source at the data rate $q_i(\vec{x}(t))$, and (iii) the links measure the dual gradient $\nabla_l f(\vec{x}(t))$, the data flow through the line, to make the update~\eqref{gradient_alg} or~\eqref{gradient_dir_alg}.

   The control channels used to coordinate communication networks are often bandwidth limited.
   Hence, it is not practically feasible to broadcast the real-valued vector $\nabla f(\vec{x}(t))$ to the users.
 The questions we address in this paper are these: can we still solve the optimization problem by communicating quantized versions of the gradient? And what are the trade-offs between optimality and quantization? 
   This motivates our investigation of limited communication gradient methods.

 \subsection{Optimal Network Flow} 
 \label{subsec:Application_RA-ONF} 

 Consider a directed network $(\mathcal{N},\mathcal{E})$ where $\mathcal{N}=\{1,\ldots,N\}$ and $\mathcal{E}=\{1,\ldots, E\}$ denote  the sets of nodes and edges, respectively. 
 Let $v_e$ denote the flow through the edge $e\in \mathcal{E}$.
 The flow through the network can then be expressed by the matrix $\vec{A}\in\R^{N\times E}$ defined as
 \begin{align*}
   \vec{A}_{ne} := \begin{cases}  
                        1 & \text{ if edge $e$ leaves node $n$,} \\
                       - 1 & \text{ if edge $e$ enters node $n$, } \\ 
                        0 & \text{ otherwise.}
                   \end{cases} 
 \end{align*}  
\noindent Component $n\in \mathcal{N}$ of $\vec{A}\vec{v}$ indicates the flow injection/consumption at node $n$, where $\vec{v}=(v_1,\ldots,v_E)$.
 We assume that the flow injection ($c_n>0$) or consumption ($c_n\leq 0$) of node $n\in \mathcal{N}$ is $c_n\in \R$ and set $\vec{c}=(c_1,\ldots,c_N)\in \R^N$.
 Then the Optimal Network Flow problem is~\cite{convex_boyd,Ghadimi_2013,Zargham_2014} 
\begin{equation*} 
\begin{aligned} 
    & \underset{ v_1,\ldots, v_E}{\text{maximize}} 
    & & \sum_{e\in \mathcal{E}} - C_e( v_e)   \\ 
    & \text{subject to}       
   & & \vec{A} \vec{v}= \vec{c}, 
\end{aligned} 
\end{equation*}
 where $C_e:\R\rightarrow \R$ are cost functions of the flow through edge $e\in \mathcal{E}$.
 If $C_e$ are $\mu$-strongly convex 
 then the dual gradient is $L$-Lipschitz continuous with $L=\lambda_{\max}(\vec{A}\vec{A}\tran)/\mu$, see Lemma~1 in~\cite{Ghadimi_2013}.
 Then similar one-way communication dual decomposition algorithm can be performed as in Section~\ref{subsec:Application_RA}.
 In contrast to Section~\ref{subsec:Application_RA}, the dual problem is unconstrained, i.e., $\mathcal{X}=\R^N$. 
 In addition, the dual variables are maintained by the nodes so it is the nodes that broadcast the dual gradients while the edges solve the local subproblems. 
 Nevertheless, the dual gradient $\nabla f(\vec{x})$ can be measured at the nodes as the component $\nabla_n f(\vec{x})$ is simply the flow injection/consumption of node $n$ for a given $\vec{x}$.

 \subsection{Task Allocation} \label{Sec:TaskAllocation}

  Consider the problem of continuous allocation of $N$ tasks between $K$ machines.
  The sets of tasks and machines are denoted by $\mathcal{N}=\{1,\ldots, N\}$ and $\mathcal{K}=\{1\ldots,K\}$, respectively. 
   Let $\vec{c}\in \R^N$ denote the total amount of each task that needs to be completed.  
  The amount of each task done by machine $k\in\mathcal{K}$  is represented by the vector $\vec{w}_k\in \mathcal{W}_k \subseteq \R^N$, where $\mathcal{W}_k$ is a local  constraint of machine $k$.
  Then the goal is to find the task allocation that minimizes the cost: 
   \begin{equation} \label{RA-TASKs} 
\begin{aligned} 
    & \underset{ \vec{w}_1,\ldots,\vec{w}_K }{\text{maximize}} 
    & & \sum_{k\in \mathcal{K}} - C_k(\vec{w}_k)  \\ 
    & \text{subject to}       
   & & \sum_{k\in\mathcal{K}} \vec{w}_k = \vec{c},  \\
   &  & &    \vec{w}_k \in \mathcal{W}_k, ~~\text{for } k\in \mathcal{K},
\end{aligned} 
\end{equation}
where $C_k:\R^N\longrightarrow \R$ is the cost of performing the different tasks on machine $k\in \mathcal{K}$. 
  If $C_k$ are $\mu$-strongly convex then the dual gradient is $L$-Lipschitz continuous with $L=(K+1)/\mu$, see Lemma~1 in~\cite{Beck_2014}.
 Therefore, dual gradient methods~\eqref{gradient_alg} and~\eqref{gradient_dir_alg}  can solve the problem. 
 If the task manager can measure the total amount done of each task, i.e., the dual gradient, then a similar one-way communication coordination scheme as in Sections~\ref{subsec:Application_RA} and~\ref{subsec:Application_RA-ONF} can solve Problem~\eqref{RA-TASKs}.

 As shown later, the Lipschitz constant $L$ will be used to characterize several complexity bounds. Since $L$ on the dual gradient of the examples above is a function of  primal problem parameters such as the number of users and the concavity parameter $\mu$, those parameters affect the complexity bounds as well.

 \section{Quantized Gradient Descent Methods} \label{sec:QGDM}

 We consider general quantized gradient methods of the form
  \begin{equation}  \label{gradient_dir_quant}
\begin{split}  
     \vec{x}(t{+}1) =&~ \left[ \vec{x}(t)- \gamma(t)\vec{d}(t)\right]_{\mathcal{X}}, 
   \end{split}  
  \end{equation}
   where $\vec{d}(t)\in \mathcal{D}\subseteq \mathcal{S}^{N-1}$ is a finite set of quantized gradient directions. 
  The following relation is between the cardinality of $\mathcal{D}$ and communicated bits of each Iteration~\eqref{gradient_dir_quant}.
  \begin{remark} \label{remark:nr_of_bits}
      The set $\mathcal{D}$ can be coded using $\log_2(|\mathcal{D}|)$ bits.
  \end{remark}
 \noindent We now introduce the quantization sets $\mathcal{D}$  used in this paper. 
  
  \subsection{Binary Quantization} \label{sec:BQ}
   In the application examples in Sections~\ref{subsec:Application_RA},~\ref{subsec:Application_RA-ONF}, and~\ref{Sec:TaskAllocation}, each dual variable is associated to a single problem component, i.e., a  link, user, or task, respectively. 
   For example, in the TCP control example in Section~\ref{subsec:Application_RA}, the dual variable  $\vec{x}_n$ is associated with link $n$. 
 Therefore, to achieve the dual gradient Algorithm~\eqref{gradient_alg} each link $l\in \mathcal{L}$ can measure its flow, i.e.,
the dual gradient component $\nabla_l f(\vec{x}(t))$, and then broadcast $\nabla_l f(\vec{x}(t))$ to the sources that use link $l$. 
 However, it might be infeasible to broadcast the full dual gradient when bandwidth is limited. 
 An alternative approach is to have the links broadcast  a binary signal indicating whether the associated dual variable is to be increased or decreased, i.e., link $l$ broadcasts $\sign(\nabla_l f(\vec{x}(t)))$.
 %
  Similarly, in the network flow problem in Section~\ref{subsec:Application_RA-ONF}, each node can measure the flow through the node and then broadcast a binary signal indicating the direction of the associated dual gradient component.    
 This quantization can be formally expressed as follows.
%
 \begin{example}[Signs of the gradients] \label{Example:grad_signs}  
 Consider the quantized gradient method in Equation~\eqref{gradient_dir_quant}. Set  
  \begin{align*} \mathcal{D}=\left\{ (1/\sqrt{N})(e_1,e_2,\ldots,e_N)  \big| e_i\in \{-1,1\} \right\}, 
  \end{align*}
  and take $\vec{d}(t)=\sign(\nabla f(\vec{x}(t)))$. 
   \end{example}

 By using this binary quantization, we  prove the convergence of the Iterates~\eqref{gradient_dir_quant} when $\mathcal{X}=\R^N$ and $\mathcal{X}=\R_+^N$.
 Therefore, our results cover both the case when the optimization Problem~\eqref{main_problem} is a dual problem associated with equality and inequality constrained primal problems.\footnote{When the primal problem have equality constraint then the dual problem is unconstraint, so $\mathcal{X}=\R^N$. Otherwise, if the primal problem has inequality constraints then $\mathcal{X}=\R_+^N$.}  
 Our results show that the Iterates~\eqref{gradient_dir_quant} using the quantization in Example~\ref{Example:grad_signs} converge (i) approximately to the set of optimal values when the step-sizes are fixed and (ii) asymptotically when the step-size are diminishing and non-summable.
 In section~\ref{sec:Conv_const}, we prove the convergence in the constrained case when $\mathcal{X}=\R_+^N$.
 The convergence in the unconstrained case $\mathcal{X}=\R^N$ is a special case of  the more general convergence results in Section~\ref{sec:Convergence}.

  \subsection{Fundamental Limit: Proper Quantization} 
    \label{subsec:FLPQ}
  When the quantization in Example~\ref{Example:grad_signs} is used in the TCP problem then there is no collaboration between the network links (or  the nodes in the Network flow problem).
 As a result $|\mathcal{D}|=2^N$ and $\log_2(2^N)=N$ bits are used to broadcast the quantized gradient direction per iteration.
 However, in many applications~\cite{Palomar_2006,Palomar_2007}, the dual problem is maintained by a single coordinator. 
 Therefore, an interesting question is whether even fewer than $N$ bits can be used per iteration when a single coordinator maintains the dual problem?
 In that case, what is the minimal quantization $|\mathcal{D}|$ so the Iterates~\eqref{gradient_dir_quant} can solve the optimization Problem~\eqref{main_problem}?
 More generally, for what quantization sets $\mathcal{D}$ do the Iterates~\eqref{gradient_dir_quant} converge to optimal solution to the Problem~\eqref{main_problem}? 
 To answer such questions we now formalize how  a quantization set $\mathcal{D}$ enables the Iterates~\eqref{gradient_dir_quant} to solve the optimization Problem~\eqref{main_problem}.   
 \begin{defin} \label{def:proper quantization} 
    Consider Iterations~\eqref{gradient_dir_quant}. 
   A finite set  $\mathcal{D}$ is a \emph{proper quantization} for the problem class $ \mathcal{F}_L(\mathcal{X})$ if for every    $f\in \mathcal{F}_L(\mathcal{X})$ 
   and every initialization $\vec{x}(0)\in \mathcal{X}$ we can choose $\vec{d}(t)\in \mathcal{D}$ and $\gamma(t)\in \R_+$, for all $t\in \N$,  such that 
      $\lim_{t\rightarrow \infty} \dist (\vec{x}(t),\mathcal{X}^{\star})= 0$.
 \end{defin}
\noindent Using  Definition~\ref{def:proper quantization} we investigate the following questions:
 \begin{enumerate}[A)]
  \item Are there equivalent  constructive conditions  that can be used to determine whether $\mathcal{D}$ is a \emph{proper quantization} or to construct such quantization sets? 
    \item What is the minimal quantization, i.e., size $|\mathcal{D}|$, for which $\mathcal{D}$ is a \emph{proper quantization}?
    \item What are the connections between the  fineness of the quantization, i.e., the size of $|\mathcal{D}|$,  and the possible convergence rate of the algorithm?
 \end{enumerate} 
 For the class $\mathcal{F}_L(\R^N)$ of unconstrained problems, the next few paragraphs answer all these questions. However, as shown in Section~\ref{subset:Sols-CC}, a proper quantization set $\mathcal{D}$ for the class $\mathcal{F}_L(\R^N)$ might not be a proper quantization for $\mathcal{F}_L(\mathcal{X})$ when $\mathcal{X}$ is a proper subset of $\R^N$.

\vspace{0.2cm}
 
\noindent \textbf{Question A):} Consider the following definition.
 \begin{defin} \label{defin:min_angle}
    The finite set $\mathcal{D}$ is a $\theta$-cover  
     if $\theta \in [0,\pi/2)$ and for all $\vec{g}\in \mathcal{S}^{N-1}$ there is $\vec{d}\in \mathcal{D}$ such that
      $\ang(\vec{g},\vec{d}) = \cos^{-1} (\langle \vec{g},\vec{d} \rangle)  \leq \theta$. 
   Equivalently, for all $\vec{g}\in \mathcal{S}^{N-1}$ there is $\vec{d}\in \mathcal{D}$ such that 
      $\cos(\ang(\vec{g},\vec{d})) \geq \cos(\theta)>0$. 
 \end{defin}
 \noindent 
Informally,  $\mathcal{D}$  is a $\theta$-cover if for any non-zero vector in $\R^N$, there exists some element in $\mathcal{D}$ such that the angle between them is smaller than or equal to $\theta$.
The following theorem shows that Definition~\ref{defin:min_angle} of $\theta$-cover is actually equivalent to Definition~\ref{def:proper quantization} of proper quantization for the problem class $\mathcal{F}_L(\R^N)$. 
 \begin{theorem} \label{prop:equivalenceBetweenPQandTC}   
     Consider a finite set $\mathcal{D}\subseteq \mathcal{S}^{N-1}$.  $\mathcal{D}$ is a proper quantization for the class $\mathcal{F}_L(\R^N)$  (Definition~\ref{def:proper quantization}) if and only if there exists $\theta\in [0,2\pi)$ such that $\mathcal{D}$ is a $\theta$-cover (Definition~\ref{defin:min_angle}).     
  \end{theorem}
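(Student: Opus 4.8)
The plan is to prove the two implications separately, under the convention in force in this part of the paper that $\mathcal{X}=\R^N$, so that the projection in~\eqref{gradient_dir_quant} is the identity and the recursion reads $\vec{x}(t{+}1)=\vec{x}(t)-\gamma(t)\vec{d}(t)$. The guiding idea for both directions is that ``$\mathcal{D}$ is a $\theta$-cover'' says precisely that at every point $\mathcal{D}$ contains a direction whose angle to the true gradient is at most $\theta<\pi/2$, i.e.\ a descent direction with uniformly controlled angular defect.

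For ``$\theta$-cover $\Rightarrow$ proper quantization'' I would fix $f\in\mathcal{F}$ and $\vec{x}(0)$; if $\nabla f(\vec{x}(t_0))=\vec{0}$ for some $t_0$ then $\vec{x}(t_0)\in\mathcal{X}^{\star}$ and I take $\gamma(t)=0$ for $t\ge t_0$, so otherwise assume $\nabla f(\vec{x}(t))\ne\vec{0}$ for all $t$. At step $t$, applying the $\theta$-cover property to the unit vector $\nabla f(\vec{x}(t))/\|\nabla f(\vec{x}(t))\|$ produces $\vec{d}(t)\in\mathcal{D}$ with $\langle\vec{d}(t),\nabla f(\vec{x}(t))\rangle\ge\cos\theta\,\|\nabla f(\vec{x}(t))\|>0$. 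The key choice is the step size: I would take $\gamma(t)=\langle\nabla f(\vec{x}(t)),\vec{d}(t)\rangle/L$ (equivalently, a line search of $f$ along $-\vec{d}(t)$); then the descent lemma for $L$-Lipschitz gradients together with $\|\vec{d}(t)\|=1$ gives the monotone decrease $f(\vec{x}(t{+}1))\le f(\vec{x}(t))-\tfrac{\cos^2\theta}{2L}\,\|\nabla f(\vec{x}(t))\|^2$. From here the argument is routine: monotonicity confines all iterates to the sublevel set $\{f\le f(\vec{x}(0))\}$, which is bounded since $f\in\mathcal{F}$ has a nonempty bounded minimizer set; telescoping gives $\sum_t\|\nabla f(\vec{x}(t))\|^2<\infty$, hence $\nabla f(\vec{x}(t))\to\vec{0}$, so every cluster point of the (bounded) sequence is a minimizer and $f(\vec{x}(t))\downarrow f^{\star}$; and because $\mathcal{X}^{\star}$ is compact and every sublevel set of $f$ is bounded, $\sup\{\dist(\vec{x},\mathcal{X}^{\star}):f(\vec{x})\le f^{\star}+\delta\}\to0$ as $\delta\downarrow0$, which applied with $\delta=f(\vec{x}(t))-f^{\star}$ yields $\dist(\vec{x}(t),\mathcal{X}^{\star})\to0$. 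Hence $\mathcal{D}$ is a proper quantization.

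For ``proper quantization $\Rightarrow$ $\theta$-cover'' I would argue the contrapositive with an explicit bad instance. The map $\vec{g}\mapsto\max_{\vec{d}\in\mathcal{D}}\langle\vec{g},\vec{d}\rangle$ is continuous on the compact set $\mathcal{S}^{N-1}$ (a finite max of linear functions), so it attains its minimum $h^{\star}$; if $h^{\star}>0$ then $\mathcal{D}$ is a $\cos^{-1}(h^{\star})$-cover, so if $\mathcal{D}$ is a $\theta$-cover for no $\theta\in[0,\pi/2)$ there exists a unit vector $\vec{g}^{\star}$ with $\langle\vec{g}^{\star},\vec{d}\rangle\le0$ for all $\vec{d}\in\mathcal{D}$. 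Now take $f(\vec{x})=\tfrac{L}{2}\|\vec{x}\|^2\in\mathcal{F}$, for which $\mathcal{X}^{\star}=\{\vec{0}\}$, and initialize at $\vec{x}(0)=\vec{g}^{\star}$. For \emph{any} admissible choices $\vec{d}(t)\in\mathcal{D}$, $\gamma(t)\in\R_+$ we have $\langle\vec{g}^{\star},\vec{x}(t{+}1)\rangle=\langle\vec{g}^{\star},\vec{x}(t)\rangle-\gamma(t)\langle\vec{g}^{\star},\vec{d}(t)\rangle\ge\langle\vec{g}^{\star},\vec{x}(t)\rangle$, so by induction $\langle\vec{g}^{\star},\vec{x}(t)\rangle\ge\langle\vec{g}^{\star},\vec{x}(0)\rangle=1$ and hence $\|\vec{x}(t)\|\ge1$ by Cauchy--Schwarz. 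Thus $\dist(\vec{x}(t),\mathcal{X}^{\star})\ge1$ for every choice of quantized directions and step sizes, so $\mathcal{D}$ is not a proper quantization, completing the contrapositive.

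The main obstacle lies in the first implication, and it is conceptual rather than computational. Although $\vec{d}(t)$ is a descent direction for $f$ (it has positive inner product with $\nabla f(\vec{x}(t))$), it is \emph{not} guaranteed to point toward $\mathcal{X}^{\star}$, so the standard subgradient-method Lyapunov argument tracking $\|\vec{x}(t)-\vec{x}^{\star}\|^2$ breaks down --- decomposing $\vec{d}(t)$ into its component along the gradient plus an orthogonal part, the cross term acquires a $\sin\theta\,\|\vec{x}(t)-\vec{x}^{\star}\|$ contribution of indefinite sign that cannot be absorbed. The fix is to monitor the objective value instead and tie the step size to the smoothness constant $L$ (or use a line search), so that the per-iteration decrease is monotone; one then needs the (standard, but worth stating) fact that members of $\mathcal{F}$ have bounded sublevel sets, which is what lets ``$f(\vec{x}(t))\to f^{\star}$'' be upgraded to ``$\dist(\vec{x}(t),\mathcal{X}^{\star})\to0$''.
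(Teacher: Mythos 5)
Your proof is correct, and it splits naturally against the paper's. The necessity direction (proper quantization $\Rightarrow$ $\theta$-cover) is essentially the paper's own argument: both of you use compactness of $\mathcal{S}^{N-1}$ and continuity of $\vec{g}\mapsto\max_{\vec{d}\in\mathcal{D}}\langle \vec{g},\vec{d}\rangle$ to extract a unit vector $\vec{g}^{\star}$ with $\langle \vec{g}^{\star},\vec{d}\rangle\leq 0$ for all $\vec{d}\in\mathcal{D}$, initialize at $\vec{g}^{\star}$, and note that $\langle \vec{g}^{\star},\vec{x}(t)\rangle$ can never decrease, so $\dist(\vec{x}(t),\mathcal{X}^{\star})\geq 1$ for every admissible choice of directions and step-sizes; if anything your witness $f(\vec{x})=\tfrac{L}{2}\|\vec{x}\|^2$ is cleaner than the paper's $f(\vec{x})=\|\vec{x}\|$, which is not differentiable at its minimizer and so does not literally lie in $\mathcal{F}$. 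The sufficiency direction is where you genuinely differ: the paper does not prove it inside this theorem at all, but defers to Theorem~\ref{prop:dim_step_size}, i.e., asymptotic convergence under diminishing, non-summable step-sizes, whose proof goes through the multi-step argument of Theorem~\ref{prop:dim_step_size-Project} built on Lemmas~\ref{lemma:binary_feedback_descent_lemma} and~\ref{prop:fixed-stepsize-type-2}. You instead give a short self-contained argument with the adaptive step $\gamma(t)=\langle\nabla f(\vec{x}(t)),\vec{d}(t)\rangle/L$ (perfectly admissible, since Definition~\ref{def:proper quantization} only asks for the existence of some choice of $\vec{d}(t)$ and $\gamma(t)$), obtaining the monotone decrease $\tfrac{\cos^2\theta}{2L}\|\nabla f(\vec{x}(t))\|^2$ per step from the descent lemma, telescoping to get $\nabla f(\vec{x}(t))\to\vec{0}$, and using boundedness of sublevel sets (which indeed follows for convex $f$ from $\mathcal{X}^{\star}$ being nonempty and bounded) to pass from $f(\vec{x}(t))\downarrow f^{\star}$ to $\dist(\vec{x}(t),\mathcal{X}^{\star})\to 0$. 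Your route buys brevity and independence from Section~\ref{sec:Convergence}; the paper's route buys a step-size rule fixed in advance (vanishing, non-summable) rather than tied to $L$ and the measured gradient, which is the form in which the method would actually be deployed. Two cosmetic caveats only: the zero-gradient corner case is fine since the paper itself allows $\gamma(t)\geq 0$ (Theorem~\ref{prop:dim_step_size}), or can be patched with a vanishing restart; and the range ``$\theta\in[0,2\pi)$'' in the statement should be read as $[0,\pi/2)$, consistent with Definition~\ref{defin:min_angle}, which is what both you and the paper actually use.
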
 
  \begin{IEEEproof}
   The proof is found in Appendix~\ref{App:prop:equivalenceBetweenPQandTC}.
  \end{IEEEproof}
 Unlike Definition~\ref{def:proper quantization}  of proper quantization, Definition~\ref{defin:min_angle} is \emph{constructive} in the sense that it can be used to determine if a set $\mathcal{D}$ is a proper quantization and to construct such sets. 
 For example, we can use Definition~\ref{defin:min_angle} and Theorem~\ref{prop:equivalenceBetweenPQandTC} to show that the quantization scheme in Example~\ref{Example:grad_signs} is a proper quantization for the problem class $\mathcal{F}_L(\R^N)$.
  \begin{lemma} \label{Lemma:signThetaCov}
     The quantization in Example~\ref{Example:grad_signs} is a $\theta$-cover with $\cos(\theta)=1/\sqrt{N}$.
  \end{lemma}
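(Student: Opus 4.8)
The plan is to exhibit, for an arbitrary $\vec{g}\in\mathcal{S}^{N-1}$, an explicit element of $\mathcal{D}$ whose angle with $\vec{g}$ is at most the claimed $\theta$. The natural candidate is the normalized sign vector of $\vec{g}$ itself: set $\vec{d}=(1/\sqrt{N})(\sign(g_1),\ldots,\sign(g_N))$, adopting the convention $\sign(0)=1$ (any fixed choice of $\pm1$ on the zero coordinates works equally well). By construction $\vec{d}\in\mathcal{D}$, since each entry is $\pm 1/\sqrt{N}$ and $\|\vec{d}\|=1$.

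Next I would compute the inner product directly:
\[
\langle \vec{g},\vec{d}\rangle = \frac{1}{\sqrt{N}}\sum_{i=1}^N g_i\,\sign(g_i) = \frac{1}{\sqrt{N}}\sum_{i=1}^N |g_i| = \frac{\|\vec{g}\|_1}{\sqrt{N}},
\]
where the middle equality uses $g_i\sign(g_i)=|g_i|$ (which holds even on the zero coordinates, regardless of the convention chosen). Then I would invoke the standard norm inequality $\|\vec{g}\|_1\ge\|\vec{g}\|_2$, valid for every vector in $\R^N$, together with $\|\vec{g}\|_2=1$, to conclude $\langle \vec{g},\vec{d}\rangle \ge 1/\sqrt{N}$. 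Hence $\cos(\ang(\vec{g},\vec{d}))\ge 1/\sqrt{N}>0$, which is exactly the defining inequality of a $\theta$-cover with $\cos(\theta)=1/\sqrt{N}$; since $1/\sqrt{N}\in(0,1]$ we indeed have $\theta\in[0,\pi/2)$ as required by Definition~\ref{defin:min_angle}.

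Finally, to confirm that $1/\sqrt{N}$ is the correct (largest attainable) constant rather than a loose bound, I would observe that taking $\vec{g}=\vec{e}_1$ gives $\|\vec{g}\|_1=1$, so $\langle \vec{g},\vec{d}\rangle=1/\sqrt{N}$ for the best $\vec{d}\in\mathcal{D}$; thus the covering angle $\theta$ cannot be made smaller for this $\mathcal{D}$. There is really no substantial obstacle here: the only point requiring a little care is the treatment of coordinates where $g_i=0$, handled by fixing an arbitrary sign convention, and the single nontrivial ingredient is the elementary inequality $\|\cdot\|_1\ge\|\cdot\|_2$ on $\R^N$.
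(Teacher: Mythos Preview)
Your proof is correct and follows essentially the same approach as the paper: pick $\vec{d}=(1/\sqrt{N})\sign(\vec{g})$ and bound the inner product from below by $1/\sqrt{N}$. The only cosmetic difference is the inequality used in the key step---the paper bounds $\sum_i |g_i|\ge\sum_i g_i^2=1$ componentwise (using $|g_i|\le 1$), whereas you invoke the equivalent norm comparison $\|\vec{g}\|_1\ge\|\vec{g}\|_2$; your added tightness remark (via $\vec{g}=\vec{e}_1$) is not in the paper but is a nice touch.
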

  The lemma follows from  the fact that for any $\vec{x}\in \mathcal{S}^{N-1}$, if we choose $\vec{d}=(1/\sqrt{N}) \sign(\vec{x})$ then
     \begin{align*}
        \cos(\ang(\vec{x},\vec{d})) &= \langle \vec{x},\vec{d} \rangle 
         = \frac{1}{\sqrt{N}} \sum_{i=1}^N \vec{x}_i \cdot \sign(\vec{x}_i)  \\
         &\geq \frac{1}{\sqrt{N}} \sum_{i=1}^N \vec{x}_i^2 
         = \frac{1}{\sqrt{N}}  ||\vec{x}||^2 =  \frac{1}{\sqrt{N}}. 
    \end{align*}
  Lemma~\ref{Lemma:signThetaCov} proves that the quantization in Example~\ref{Example:grad_signs} is a proper quantization. 
  We give some other interesting $\theta$-covers  now.
  \begin{example}[Minimal Proper Quantization: $|\mathcal{D}|=N+1$] \label{Examples:theta-cover-1}   
    Set 
  \begin{align}  \label{eq:D:Examples:theta-cover-1}
       \mathcal{D}=\{ \vec{e}_1,\ldots,\vec{e}_N, -\vec{1}/\sqrt{N}\},
   \end{align}
  where $\vec{e}_i$ is the $i$-th element of the normal basis and $\vec{1}=(1,\ldots,1)\in \R^N$. 
  Clearly, $|\mathcal{D}|=N+1$ so $\mathcal{D}$ can be coded using only $\log_2(N+1)$ bits.
 We show below in Theorem~\ref{Theorem:MinimalProperQuant} that this is a minimal proper quantization as there does not exist proper quantization $\mathcal{D}$ with $|\mathcal{D}|\leq N$. 
  We show in Lemma~\ref{Lemma:Example1-longproofinappendix} in the  Appendix that $\mathcal{D}$ is a $\theta$-cover with 
  \begin{align} \label{eq:inExample1_theta} 
         \cos(\theta)= \frac{1}{\sqrt{N^2+2\sqrt{N}(N-1)}}.
  \end{align}
 \end{example}
 \begin{example}[Example in $\R^2$: $|\mathcal{D}|=n$]  \label{Examples:theta-cover-2}
         For every $n\in\N$ set 
     \begin{align*}
        \mathcal{D}_n=\left\{ \left[ \begin{array}{c} \cos(2\pi k/ n) \\ \sin(2\pi k/ n)\end{array}   \right] \in \R^2 \bigg| k=0,1,\ldots,n-1 \right\}.
     \end{align*} 
     Clearly,  if $n\geq 3$, $\mathcal{D}_n$ is a $\theta$-cover with $\theta = \pi/n$. 
 \end{example}
 \begin{example}[Normal Basis: $|\mathcal{D}|=2N$]
   Let 
   $\mathcal{D}=\{ \vec{e}_1,-\vec{e}_1, \vec{e}_2,-\vec{e}_2, \ldots,\vec{e}_N,-\vec{e}_N\}$. 
  $|\mathcal{D}|=2N$ and $\mathcal{D}$ is a $\theta$-cover with $\cos(\theta)=1/\sqrt{N}$
  since for all $\vec{x}\in \mathcal{S}^{N-1}$, 
  if we  choose $\vec{d}=  \sign(\vec{x}_i) \vec{e}_i$ where $i=\text{argmax}_{j=1,\ldots,N} |\vec{x}_j|$
  then 
   $ \cos(\ang(\vec{x},\vec{d}))=\langle \vec{x},\vec{d} \rangle=  \vec{x}_i \cdot \sign(\vec{x}_i)  = |\vec{x}_i| \geq  1/\sqrt{N}. $
 \end{example}

 For constant $\theta{\in} (0,\pi/2)$, it can be of interest to find the $\theta$-cover $\mathcal{D}$ which has minimal cardinality $|\mathcal{D}|$.  
 This problem has been investigated in the coding theory literature,~ \cite{Wyner_1967,sole1991covering}.

     \vspace{0.2cm}
 
\noindent \textbf{Question B):} The minimal proper quantization for the problem class $\mathcal{F}_L(\R^N)$ is $|\mathcal{D}|=N+1$. We already have a proper quantization with $|\mathcal{D}|=N+1$, see  Example~\ref{Examples:theta-cover-1}. 
     The following result shows that 
      there does not exist a  quantization set $\mathcal{D}$ with fewer elements than $N+1$. 
\begin{theorem} \label{Theorem:MinimalProperQuant}
  Suppose that $\mathcal{D}\subseteq \mathcal{S}^{N-1}$ and $|\mathcal{D}|\leq N$.  Then $\mathcal{D}$ is not a proper quantization.
 \end{theorem}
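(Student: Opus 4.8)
The plan is to route through the equivalence in Theorem~\ref{prop:equivalenceBetweenPQandTC}: it suffices to show that $\mathcal{D}$ is \emph{not} a $\theta$-cover for any $\theta\in[0,\pi/2)$. Since every such $\theta$ has $\cos(\theta)>0$, this follows as soon as we exhibit a single direction $\vec{g}\in\mathcal{S}^{N-1}$ for which \emph{no} element of $\mathcal{D}$ satisfies the covering inequality, i.e. $\langle\vec{g},\vec{d}\rangle\le 0$ for all $\vec{d}\in\mathcal{D}$; indeed $\langle\vec{g},\vec{d}\rangle\le 0<\cos(\theta)$ then violates Definition~\ref{defin:min_angle} at $\vec{g}$, so by Theorem~\ref{prop:equivalenceBetweenPQandTC} the set $\mathcal{D}$ cannot be a proper quantization (Definition~\ref{def:proper quantization}).

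The existence of such a $\vec{g}$ is the heart of the matter, and it is really a statement about positive spanning. The admissible directions, together with the origin, form the polar cone $\mathcal{D}^{\circ}=\{\vec{v}\in\R^N:\langle\vec{v},\vec{d}\rangle\le 0\ \text{for all}\ \vec{d}\in\mathcal{D}\}$, and $\mathcal{D}^{\circ}=\{\vec{0}\}$ would force $\mathrm{cone}(\mathcal{D})=\R^N$. I would rule this out by a dimension/uniqueness argument: if $\mathrm{span}(\mathcal{D})\subsetneq\R^N$, any nonzero vector orthogonal to $\mathrm{span}(\mathcal{D})$ already lies in $\mathcal{D}^{\circ}$; otherwise $|\mathcal{D}|\le N$ together with $\mathrm{span}(\mathcal{D})=\R^N$ forces $\mathcal{D}=\{\vec{d}_1,\ldots,\vec{d}_N\}$ to be a basis of $\R^N$, and then $-\vec{d}_1$ cannot be written as a nonnegative combination of the $\vec{d}_i$ — by uniqueness of coordinates in a basis the coefficient of $\vec{d}_1$ would have to be $-1$ — so $\mathrm{cone}(\mathcal{D})\ne\R^N$ and hence $\mathcal{D}^{\circ}\ne\{\vec{0}\}$. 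Picking any $\vec{v}\in\mathcal{D}^{\circ}\setminus\{\vec{0}\}$ and normalising yields the desired $\vec{g}\in\mathcal{S}^{N-1}$. The bookkeeping around Theorem~\ref{prop:equivalenceBetweenPQandTC} is trivial, and I expect this positive-spanning step — in particular the sub-case where $\mathcal{D}$ happens to be a basis — to be the only place that needs care.

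As a self-contained alternative that avoids invoking Theorem~\ref{prop:equivalenceBetweenPQandTC}, one can use the same $\vec{g}$ to build an explicit hard instance in the unconstrained setting $\mathcal{X}=\R^N$ of this section: for an arbitrary initialization $\vec{x}(0)$ put $\vec{x}^{\star}=\vec{x}(0)-\vec{g}$ and $f(\vec{x})=\tfrac12\|\vec{x}-\vec{x}^{\star}\|^2$, which lies in $\mathcal{F}$ (convex, differentiable, $1$-Lipschitz gradient, unique minimiser so $\mathcal{X}^{\star}=\{\vec{x}^{\star}\}$ is nonempty and bounded). For any choices $\vec{d}(t)\in\mathcal{D}$ and $\gamma(t)\ge 0$, the recursion~\eqref{gradient_dir_quant} gives $\vec{x}(t)=\vec{x}(0)-\sum_{s<t}\gamma(s)\vec{d}(s)$, hence $\langle\vec{g},\vec{x}(t)-\vec{x}^{\star}\rangle=\|\vec{g}\|^2-\sum_{s<t}\gamma(s)\langle\vec{g},\vec{d}(s)\rangle\ge 1$, and by Cauchy--Schwarz $\dist(\vec{x}(t),\mathcal{X}^{\star})=\|\vec{x}(t)-\vec{x}^{\star}\|\ge 1$ for every $t$. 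Thus the iterates stay bounded away from $\mathcal{X}^{\star}$ for all admissible $\vec{d}(t),\gamma(t)$, so $\mathcal{D}$ is not a proper quantization.
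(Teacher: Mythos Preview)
Your proof is correct and follows essentially the same architecture as the paper's: both route through Theorem~\ref{prop:equivalenceBetweenPQandTC} and split into the two cases $\Span(\mathcal{D})\subsetneq\R^N$ versus $\mathcal{D}$ a basis of $\R^N$, exhibiting in each a direction $\vec{g}\in\mathcal{S}^{N-1}$ with $\langle\vec{g},\vec{d}\rangle\le 0$ for all $\vec{d}\in\mathcal{D}$. The only substantive difference is the execution of the basis case: the paper constructs the blocking direction explicitly as $\vec{a}=-\vec{D}^{-1}\vec{1}$ (where $\vec{D}$ has the $\vec{d}_i$ as rows), so that $\langle\vec{d}_i,\vec{a}\rangle=-1$ for every $i$; you instead argue nonconstructively via polar duality, observing that $-\vec{d}_1\notin\mathrm{cone}(\mathcal{D})$ by uniqueness of basis coordinates, hence $\mathrm{cone}(\mathcal{D})\ne\R^N$ and $\mathcal{D}^{\circ}\ne\{\vec{0}\}$. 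The paper's construction is shorter and self-contained, while yours situates the result in the positive-spanning framework and implicitly invokes the bipolar theorem (plus closedness of finitely generated cones). Your self-contained alternative with $f(\vec{x})=\tfrac12\|\vec{x}-\vec{x}^\star\|^2$ is a nice addition and in fact slightly cleaner than the analogous step inside the paper's proof of Theorem~\ref{prop:equivalenceBetweenPQandTC}.
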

 \begin{IEEEproof}
   The proof is in Appendix~\ref{APP:Theorem:MinimalProperQuant}.
 \end{IEEEproof}
 This result shows that the minimum data-rate needed for the algorithm to  converge is $\log_2(N+1)$ bits/iteration. 
 To the best of our knowledge, there are no similar results on minimal quantizations for distributed optimization methods in the existing literature.

\vspace{0.2cm}
 
\noindent \textbf{Question C):}
  In Section~\ref{sec:Convergence}, we study the convergence of Iterates~\eqref{gradient_dir_quant} for the problem class $\mathcal{F}_L(\R^N)$ when $\mathcal{D}$ is a $\theta$-cover. 
  When the step-size is constant, i.e., $\gamma(t)=\gamma$, then we show that any solution accuracy $||\nabla f(\vec{x})||\leq \epsilon$ and $f(\vec{x})-f^{\star}\leq \epsilon$ can be achieved, for $\epsilon>0$. 
  We also give an upper bound on the number of iterations/bits needed to achieve that accuracy. 
  An implication of the results is that after $T\in \N$ iterations the accuracy 
   $$ ||\nabla f(\vec{x})||\leq \frac{M}{\cos(\theta) \sqrt{T}},$$ 
   can be reached using appropriate constant step-size choice, where $M>0$ is some constant.
   Finally, we show how to choose the step-sizes so that every limit point of the algorithm is an optimizer of Problem~\eqref{main_problem}.

\subsection{Fundamental Limit: What If There Are Constraints?} 
\label{subset:Sols-CC} 

 We now show by simple examples why a $\theta$-cover $\mathcal{D}$ might not be a proper quantization for the problem class $\mathcal{F}_L(\mathcal{X})$ when the feasible set $\mathcal{X}$ is a proper subset of $\R^N$. 
 These examples are illustrated in Fig.~\ref{fig:QGM-ConCha-A} and Fig.~\ref{fig:QGM-ConCha-B}.  
  Both figures demonstrate  scenarios where a single step of Iteration~\eqref{gradient_dir_quant} is taken from $\vec{x}$. 
 In both figures $\mathcal{D}{=}\{\vec{d}_1,\vec{d}_2,\vec{d}_3\}$ is a $\theta$-cover with $\theta{=}\pi/3$. 
 The feasible region is depicted by gray color. 
  The curves depict the contours of the objective function $f$. 
  The dotted lines depict the angle $\theta$. 

Fig.~\ref{fig:QGM-ConCha-A} depicts a scenario where Iteration~\eqref{gradient_dir_quant} may have a non-optimal stationary point, even though $\mathcal{D}$ is a $\theta$-cover. 
The point $\vec{x}$ is a stationary point since $-\vec{d}_1$ is orthogonal to the constraint.
 However,  $\vec{x}$ is not an optimal solution of Problem~\eqref{main_problem}, since the gradient $\nabla f(\vec{x})$ is not orthogonal to the constraint. 
 This example shows that the equivalence established in Theorem~\ref{prop:equivalenceBetweenPQandTC} does not generalize to the constrained case. 
Fig.~\ref{fig:QGM-ConCha-B} shows that the Iterates~\eqref{gradient_dir_quant} can go in the opposite direction of the optimal solution. 
 The Iterate~\eqref{gradient_dir_quant} is a not a descent direction; hence, the objective function value is increasing. 
 The optimal solution of Problem~\eqref{main_problem} and the Iterate~\eqref{gradient_dir_quant} are denoted by $\vec{x}^{\star}$  and $[\vec{x}-\gamma \vec{d}_1]_{\mathcal{X}}$, respectively.

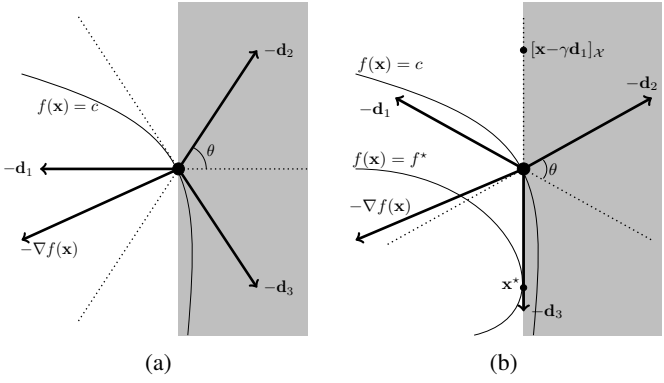
\begin{figure}[t]

\centering
    \begin{subfigure}[t]{0.5\columnwidth}
        \centering
        \resizebox{1\columnwidth}{\columnwidth}{
            \begin{tikzpicture}
                 \begin{scope}[thick,font=\scriptsize]
                 \end{scope}
                 \def\ringa{ circle (3)}

                 \shade[top color=lightgray, bottom color=lightgray] (0,-3.5) rectangle (2.8,3.5);
                 \begin{scope}[]
                      \draw [->,ultra thick] (0,0) -- (-3,0) node [ left] {$-\vec{d}_1$};
                      \draw [->,ultra thick] (0,0) -- (1.7,2.5) node [ right] {$-\vec{d}_2$};
                      \draw [->,ultra thick] (0,0) -- (1.7,-2.5) node [ right] {$-\vec{d}_3$};    
                      \draw [dotted,thick,color=black] (0,0) -- (-2.2,3.25) node [ right] {};         
                      \draw [dotted,thick,color=black] (0,0) -- (-2.2,-3.25) node [ right] {};    
                      \draw [dotted,thick,color=black] (0,0) -- (2.8,0) node [ right] {};    
                      \draw (0.6,0) arc (0:66:0.5);             
        
                      \draw [->,ultra thick,color=black] (0,0) -- (-3.4,-1.5);
                      \node [] at (-2.8,-1.7) {${-}\nabla f(\vec{x})$};                                                                                   
                 \end{scope}
                  \draw (-3.4,2) .. controls (0.3,0.9) and (0.5,-0.1) .. (0.2,-3.5);
                 \node[] at (0.7,0.4)  {$\theta$};
                 \node[] at (-2.4,1.3)  {$f(\vec{x})=c$};
                  \draw[black,fill=black] (0,0) circle (0.8ex);                

            \end{tikzpicture}
            }
           \caption{} \label{fig:QGM-ConCha-A}
           
    \end{subfigure}%
    ~
        \begin{subfigure}[t]{0.5\columnwidth}
        \centering
                \resizebox{1\columnwidth}{\columnwidth}{
            \begin{tikzpicture}
                 \begin{scope}[thick,font=\scriptsize]
                 \end{scope}
                 \def\ringa{ circle (3)}
                
                 \shade[top color=lightgray, bottom color=lightgray] (0,-3.5) rectangle (2.8,3.5);
                 \begin{scope}[]
                      \draw [->,ultra thick] (0,0) -- (150:3); 
                       \node [] at (-3,1.25) {${-}\vec{d}_1$};         
                      \draw [->,ultra thick] (0,0) -- (30:3); 
                       \node[] at (2.4,1.8)  {${-}\vec{d}_2$};
                      \draw [->,ultra thick] (0,0) -- (270:3)  node [ right] {${-}\vec{d}_3$};    
                      \draw [dotted,thick,color=black] (0,0) -- (90:3.2) node [ right] {};         
                      \draw [dotted,thick,color=black] (0,0) -- (210:3.2) node [ right] {};    
                      \draw [dotted,thick,color=black] (0,0) -- (330:3) node [ right] {};    
                      \draw (0.4,-0.2) arc (-40:40:0.3);          
        
                      \draw [->,ultra thick,color=black] (0,0) -- (-3.4,-1.5);          
                      \node [] at (-2.9,-0.8) {${-}\nabla f(\vec{x})$};                                                                         
                 \end{scope}
                  \draw (-3.4,2) .. controls (0.3,0.9) and (0.5,-0.1) .. (0.2,-3.5);
                  \draw (-3.4,0) .. controls (-0.3,0) and (1.05,-3) .. (-1,-3.5);
                 \node[] at (0.6,0)  {$\theta$};
                 \node[] at (-2.7,2.2)  {$f(\vec{x})=c$};
                 \node[] at (-2.7,0.2)  {$f(\vec{x})=f^{\star}$};
                 \node[] at (-0.25,-2.4)  {$\vec{x}^{\star}$};
                \draw[black,fill=black] (0,0) circle (0.8ex);
                  \draw[black,fill=black] (0,2.5) circle (0.4ex);
                  \draw[black,fill=black] (0,-2.5) circle (0.4ex);
                  \node[] at (0.9,2.5)  {$[ \vec{x}{-}\gamma \vec{d}_1]_{\mathcal{X}}$};
           
            \end{tikzpicture}
            }
        \caption{ } \label{fig:QGM-ConCha-B}
    \end{subfigure}
    \caption{
 An iteration of  Equation~\eqref{gradient_dir_quant} starting at $\vec{x}$. $\vec{x}$ is in the middle of the figure marked by a big filled circle. The feasible set $\mathcal{X}$ is marked by the shaded region.    $\vec{x}^{\star}$ and $f^{\star}$ are the optimizers and optimal value.  $\mathcal{D}=\{\vec{d}_1,\vec{d}_2,\vec{d}_3\}$ is a $\theta$-cover with $\theta=\pi/3$. In (a) $\vec{x}$ is a fixed point of the algorithm.  In (b) the algorithm takes the step $[ \vec{x}{-}\gamma \vec{d}_1]_{\mathcal{X}}$.} 
           \label{fig:QGM-ConCha}
\end{figure}

\section{Convergence - Binary Quantization} \label{sec:Conv_const}

 In this section, we  investigate the convergence of the quantized gradient method 
  \begin{align}
      \vec{x}(t+1) &= \left\lceil  \vec{x}(t)-  \frac{\gamma(t)}{\sqrt{N}} \sign(\nabla f(\vec{x}(t))) \right\rceil^+, \label{eq:prop:dim_step_size-Project-rec-1} 
  \end{align}  
  for solving the optimization Problem~\eqref{main_problem} when $\mathcal{X}=\R_+^N$. 
 We make the additional assumption
 that the gradients $\nabla f$ are $B$-bounded, i.e., $||\nabla f(\vec{x})||\leq B$ for all $\vec{x}\in \R_+^N$.\footnote{This assumption is only needed in this section. The dual gradient is generally bounded if the primal problem is strongly convex and has bounded feasible set, see Proposition~6.1.1 in~\cite{nonlinear_bertsekas}. 
 For example, the dual gradient is bounded in the TCP problem in Section~\ref{subsec:Application_RA}.} 
 In the next section, which considers unconstrained problems,  we allow the gradients to be unbounded. 
 
 In the analysis we take advantage of the following property of optimal solution $\vec{x}^{\star}$.
  \begin{lemma} \label{Lemma:OptCon}
     Consider optimization Problem~\eqref{main_problem} with $\mathcal{X}=\R_+^N$.
     For $\alpha>0$ define the function 
     \begin{align} \label{eq:closedFun} 
        L_{\alpha}(\vec{x})=|| \vec{x}-\lceil \vec{x}-\alpha \nabla f (\vec{x})\rceil^+||.
     \end{align}
     Then $\vec{x}\in \mathcal{X}^{\star}$ if and only if $L_{\alpha}(\vec{x})=0$.
  \end{lemma}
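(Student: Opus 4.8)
The plan is to recognize that $L_{\alpha}(\vec{x})=0$ says exactly that $\vec{x}$ is a fixed point of the projected‑gradient map $\vec{x}\mapsto \lceil \vec{x}-\alpha\nabla f(\vec{x})\rceil^+$, since $\lceil\cdot\rceil^+$ is precisely the Euclidean projection onto the closed convex cone $\R_+^N$ (acting componentwise as $\max\{\cdot,0\}$). The classical tool I would invoke is the obtuse‑angle characterization of the Euclidean projection onto a closed convex set: $\vec{p}=\lceil \vec{z}\rceil^+$ if and only if $\vec{p}\in\R_+^N$ and $\langle \vec{z}-\vec{p},\vec{y}-\vec{p}\rangle\le 0$ for all $\vec{y}\in\R_+^N$. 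With this in hand both implications reduce to a one‑line manipulation.

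For the direction $\vec{x}\in\mathcal{X}^{\star}\Rightarrow L_{\alpha}(\vec{x})=0$: because $f$ is convex and differentiable and $\R_+^N$ is convex, $\vec{x}$ being a minimizer is equivalent to the variational inequality $\langle \nabla f(\vec{x}),\vec{y}-\vec{x}\rangle\ge 0$ for all $\vec{y}\in\R_+^N$. Multiplying by $-\alpha<0$ gives $\langle (\vec{x}-\alpha\nabla f(\vec{x}))-\vec{x},\,\vec{y}-\vec{x}\rangle\le 0$ for all $\vec{y}\in\R_+^N$, and since $\vec{x}\in\R_+^N$ the projection characterization yields $\vec{x}=\lceil \vec{x}-\alpha\nabla f(\vec{x})\rceil^+$, i.e.\ $L_{\alpha}(\vec{x})=0$.

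For the converse, $L_{\alpha}(\vec{x})=0$ means $\vec{x}=\lceil \vec{x}-\alpha\nabla f(\vec{x})\rceil^+$, so applying the projection characterization with $\vec{z}=\vec{x}-\alpha\nabla f(\vec{x})$ and $\vec{p}=\vec{x}$ gives $\langle \nabla f(\vec{x}),\vec{y}-\vec{x}\rangle\ge 0$ for all $\vec{y}\in\R_+^N$; convexity of $f$ then upgrades this to $f(\vec{y})\ge f(\vec{x})+\langle\nabla f(\vec{x}),\vec{y}-\vec{x}\rangle\ge f(\vec{x})$ for every $\vec{y}\in\R_+^N$, so $\vec{x}\in\mathcal{X}^{\star}$. (One could equally argue componentwise: $\lceil x_i-\alpha\partial_i f(\vec{x})\rceil^+=x_i\ge 0$ forces $\partial_i f(\vec{x})=0$ whenever $x_i>0$ and $\partial_i f(\vec{x})\ge 0$ whenever $x_i=0$, which is exactly the KKT system for the nonnegativity constraints.)

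I do not expect a genuine obstacle here; the proof is routine. The only points needing care are verifying that $\lceil\cdot\rceil^+$ really is the projection onto $\R_+^N$ so the obtuse‑angle lemma applies, and noting that convexity of $f$ is essential in the converse to pass from a stationarity/variational‑inequality condition to global optimality. It is also worth remarking that the characterization is independent of the particular $\alpha>0$, which is why the statement can be made for an arbitrary fixed $\alpha$.
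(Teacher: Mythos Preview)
Your proof is correct. The paper takes the componentwise KKT route you mention parenthetically at the end: it writes out the KKT conditions for the constraint $\vec{x}\in\R_+^N$ as (i) $\nabla_i f(\vec{x})\,\vec{x}_i=0$, (ii) $\nabla_i f(\vec{x})\ge 0$, (iii) $\vec{x}_i\ge 0$, and then checks both implications coordinate by coordinate against the fixed-point condition $\vec{x}_i=\lceil \vec{x}_i-\alpha\nabla_i f(\vec{x})\rceil^+$. Your primary argument via the obtuse-angle characterization of the projection and the variational-inequality form of optimality is cleaner and has the advantage of working verbatim for any closed convex $\mathcal{X}$, not just $\R_+^N$; the paper's approach is more hands-on but requires no appeal to the projection lemma and makes the role of each KKT condition explicit.
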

  \begin{IEEEproof}
     The  proof is found in Appendix~\ref{App:Lemma:OptCon}.
  \end{IEEEproof}
  We investigate the convergence of the iterates in Equation~\eqref{eq:prop:dim_step_size-Project-rec-1} when the step-sizes are fixed in Section~\ref{subseq:Pro-CSS} and when the step-sizes are diminishing in Section~\ref{subseq:Pro-DSS}.
 
 \subsection{Constant Step-Size} \label{subseq:Pro-CSS}

 In this section, we study the convergence of Iterates~\eqref{eq:prop:dim_step_size-Project-rec-1}  when the step-size $\gamma(t)$ is constant, i.e., when $\gamma(t)=\gamma$ for all $t$. 
 We show that the Iterates~\eqref{eq:prop:dim_step_size-Project-rec-1}  can approximately solve optimization Problem~\eqref{main_problem} up to any $\epsilon$-accuracy, provided that the step-size $\gamma>0$ is small enough. 
  By  approximately solving~\eqref{main_problem}, we mean that we can  find $\vec{x}\in \R_+^N$ that approximately satisfies certain optimality conditions. 
  In particular, we consider the following two types of optimality conditions for Problem~\eqref{main_problem}: 
 \begin{align}
   &  \mbox{Type-1:}   && L_{\alpha} (\vec{x}) \leq \epsilon,  \label{Type-1b} \\ 
   &  \mbox{Type-2:}  &&f(\vec{x}) -f^{\star} \leq \epsilon.   \label{Type-2b}  
\end{align} 
 A point $\vec{x}$ is an optimal solution to optimization Problem~\eqref{main_problem} if and only if~\eqref{Type-1b} [or~\eqref{Type-2b}] hold with $\epsilon=0$. 
 The \mbox{Type-1} optimality condition is a generalization of the optimality condition that $||\nabla f(\vec{x})||=0$ for unconstrained problems.
  The Type-2 optimality condition simply state that the distance from the obtained objective value to the optimal value is less than $\epsilon$.
We now show that both Type-1 and Type-2 approximate optimality conditions can be reached in finite number of iterations.

  \subsubsection{Stopping Condition of \emph{Type-1}}

  We start by showing the Type-1 optimality condition can be reached for all $\epsilon>0$ in finite number of iterations. 
  The following lemma is essential in  proving the result. 
\begin{lemma}  \label{lemma:binary_feedback_descent_lemma-Pojected}
 Suppose $f\in \mathcal{F}_L(\R_+^N)$ and $\nabla f$ is $B$-bounded.
 Suppose $\epsilon>0$ and $\alpha\geq 0$ are such that
 \begin{align}
    L_{\alpha}(\vec{x})= ||\vec{x}- \lceil \vec{x}- \alpha \nabla f(\vec{x}) \rceil^+ || \geq \epsilon. \label{eq:LemmaProDec-OC}
 \end{align}
Then the following holds
 \begin{align*}
   f\left( \left\lceil \vec{x}{-} \frac{\gamma}{\sqrt{N}} \sign(\nabla f(\vec{x}))  \right\rceil^+ \right) \leq f(\vec{x})-\bar{\delta}(\epsilon,\alpha,\gamma)
 \end{align*}
 where
  $$ \bar{\delta}(\epsilon,\alpha,\gamma) 
    = \left( \frac{2 \epsilon^2 }{L \alpha^2 B N^{3/2}}  - \gamma  \right) \frac{L}{2} \gamma.$$
 \end{lemma}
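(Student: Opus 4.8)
The plan is to combine the standard quadratic upper bound for $L$-smooth functions with a coordinate-wise analysis of the projected sign step. Write $\vec{g}=\nabla f(\vec{x})$ and $\vec{x}^+=\lceil \vec{x}-\tfrac{\gamma}{\sqrt{N}}\sign(\vec{g})\rceil^+$. Since $\vec{x}\in\R_+^N$ we have $\vec{x}=\lceil\vec{x}\rceil^+$, so nonexpansiveness of the projection onto $\R_+^N$ gives $\|\vec{x}^+-\vec{x}\|\le \|\tfrac{\gamma}{\sqrt{N}}\sign(\vec{g})\|\le\gamma$. The quadratic upper bound $f(\vec{y})\le f(\vec{x})+\langle\nabla f(\vec{x}),\vec{y}-\vec{x}\rangle+\tfrac{L}{2}\|\vec{y}-\vec{x}\|^2$ (only $L$-Lipschitz continuity of $\nabla f$ is used here) then yields
\[
   f(\vec{x}^+)\le f(\vec{x})+\langle\vec{g},\vec{x}^+-\vec{x}\rangle+\tfrac{L}{2}\gamma^2 .
\]
Since $\bar{\delta}(\epsilon,\alpha,\gamma)=\tfrac{\epsilon^2\gamma}{\alpha^2 B N^{3/2}}-\tfrac{L}{2}\gamma^2$, the claim reduces to establishing
\[
   -\langle\vec{g},\vec{x}^+-\vec{x}\rangle\ \ge\ \frac{\gamma}{\alpha^2 B N^{3/2}}\,L_{\alpha}(\vec{x})^2 ,
\]
because the hypothesis $L_{\alpha}(\vec{x})\ge\epsilon$ then raises the right-hand side to $\tfrac{\epsilon^2\gamma}{\alpha^2 B N^{3/2}}$ and substitution closes the argument.

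To prove the reduced inequality I would compute both sides componentwise. A case split on $\sign(g_i)$ gives $x_i^+-x_i=-\min(x_i,\tfrac{\gamma}{\sqrt{N}})$ when $g_i>0$, $x_i^+-x_i=\tfrac{\gamma}{\sqrt{N}}$ when $g_i<0$, and $x_i^+=x_i$ when $g_i=0$, so that
\[
   -\langle\vec{g},\vec{x}^+-\vec{x}\rangle=\sum_{i:g_i>0}g_i\min\!\big(x_i,\tfrac{\gamma}{\sqrt{N}}\big)+\sum_{i:g_i<0}|g_i|\tfrac{\gamma}{\sqrt{N}}\ \ge\ 0 .
\]
In parallel, $\big(\vec{x}-\lceil\vec{x}-\alpha\vec{g}\rceil^+\big)_i=\min(x_i,\alpha g_i)$ for $g_i\ge 0$ and $=\alpha g_i$ for $g_i<0$, so $L_{\alpha}(\vec{x})^2=\sum_{i:g_i>0}\min(x_i,\alpha g_i)^2+\sum_{i:g_i<0}\alpha^2 g_i^2$. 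I would then match the two expressions term by term: for $g_i<0$ the ratio of the summands equals $\tfrac{\alpha^2|g_i|\sqrt{N}}{\gamma}\le\tfrac{\alpha^2 B\sqrt{N}}{\gamma}$ using $|g_i|\le\|\vec{g}\|\le B$; for $g_i>0$ one bounds $\min(x_i,\alpha g_i)^2\le\alpha g_i\min(x_i,\alpha g_i)$ and then, splitting according to which of $x_i$, $\alpha g_i$, $\tfrac{\gamma}{\sqrt{N}}$ is smallest, by either $\alpha\cdot g_i\min(x_i,\tfrac{\gamma}{\sqrt{N}})$ or $\tfrac{\alpha^2 B\sqrt{N}}{\gamma}\cdot g_i\min(x_i,\tfrac{\gamma}{\sqrt{N}})$, using $\min(x_i,\alpha g_i)\le\alpha g_i\le\alpha B$ and $\min(x_i,\alpha g_i)\le x_i$. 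All the resulting constants are absorbed into $\tfrac{\alpha^2 B N^{3/2}}{\gamma}$ — this is what the extra factor of $N$ in the denominator of $\bar{\delta}$ pays for — and summing over $i$ gives the reduced inequality.

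The main obstacle is precisely this last, deceptively routine, term-by-term estimate. The projection onto $\R_+^N$ truncates the coordinates with $g_i>0$ but leaves the coordinates with $g_i<0$ untouched, so the two sums appearing in $L_{\alpha}(\vec{x})^2$ and in $-\langle\vec{g},\vec{x}^+-\vec{x}\rangle$ are structurally different and cannot be compared in a single stroke; moreover the fixed probe length $\alpha$ inside $\min(x_i,\alpha g_i)$ and the (typically small) step length $\gamma$ inside $\min(x_i,\tfrac{\gamma}{\sqrt{N}})$ must be traded off against one another, which is why the $B$-boundedness of the gradient enters. Arranging the constants so that exactly $\tfrac{\alpha^2 B N^{3/2}}{\gamma}$ comes out, with the correct powers of $N$, $B$ and $\gamma$, is where the bookkeeping has to be done carefully; everything else is the standard smooth-descent computation.
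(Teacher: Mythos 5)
Your proposal is correct in substance but takes a genuinely different route from the paper's own proof. The paper never compares the two sums coordinate by coordinate: it passes from the $2$-norm to the $\infty$-norm, isolates the single worst coordinate $i$ with $|\vec{x}_i-\lceil \vec{x}_i-\alpha\nabla_i f(\vec{x})\rceil^+|\geq \epsilon/\sqrt{N}$, rescales that one scalar projection using the monotonicity/scaling inequalities of its auxiliary Lemma~\ref{Lemma:projections} with $\beta=\gamma/(\alpha B\sqrt{N})$ to obtain $\nabla_i f(\vec{x})\,(\vec{x}_i-\lceil\bar{\vec{x}}_i\rceil^+)\geq \epsilon^2\gamma/(\alpha^2BN^{3/2})$, discards all other coordinates using only the sign information $z(x-\lceil x-\alpha z\rceil^+)\geq 0$, and then finishes with the descent lemma and nonexpansiveness exactly as in your first display. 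Your argument instead writes both $-\langle\nabla f(\vec{x}),\vec{x}^+-\vec{x}\rangle$ and $L_\alpha(\vec{x})^2$ in closed form over all coordinates and matches them term by term, in effect re-deriving the scalar content of Lemma~\ref{Lemma:projections} inside the proof; your componentwise formulas and the reduction $\bar{\delta}=\epsilon^2\gamma/(\alpha^2BN^{3/2})-\tfrac{L}{2}\gamma^2$ are correct. What your route buys is a marginally stronger intermediate bound, $\langle\nabla f(\vec{x}),\vec{x}-\vec{x}^+\rangle\geq \gamma L_\alpha(\vec{x})^2/(\alpha^2BN^{3/2})$, which keeps every coordinate rather than only the worst one; what the paper's route buys is brevity, since only one scalar term has to be estimated once Lemma~\ref{Lemma:projections} is available.

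One caveat you should make explicit: the final ``constants are absorbed into $\alpha^2BN^{3/2}/\gamma$'' step is not unconditional. In the sub-cases where $x_i$ or $\alpha g_i$ is the smallest of the three quantities, the per-term ratio you need is $\alpha$, and $\alpha\leq \alpha^2BN^{3/2}/\gamma$ holds only when $\gamma\leq \alpha BN^{3/2}$. For larger $\gamma$ the reduced inequality, and indeed the lemma as stated, fails (take $N=1$, $f(x)=x$ on $\R_+$, $B=1$, $\alpha=0.1$, $\vec{x}=0.05$, $\gamma=1$, and $L$ small). This is not a defect peculiar to your argument: the paper's proof tacitly assumes the even stronger restriction $\gamma\leq\alpha B\sqrt{N}$ when it sets $\beta=\gamma/(\alpha B\sqrt{N})\in[0,1]$, and the step-sizes used downstream in Theorem~\ref{Prop:FSS-SC-Type-1-Cons} are meant to live in this regime. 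Still, you should state the implicit step-size restriction rather than hide it in the bookkeeping.
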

 \begin{IEEEproof}
  The proof is provided in Appendix~\ref{App:lemma:binary_feedback_descent_lemma-Pojected}.
 \end{IEEEproof} 
 The lemma shows that if $L_{\alpha}(\vec{x})>0$ for some  $\vec{x}\in \R_+^N$, then  objective function value $f(\vec{x})$ will decrease with   Iterates~\eqref{eq:prop:dim_step_size-Project-rec-1}, provided that the step-size $\gamma(t)>0$ is small enough. 
 We use this intuition to provide an upper bound on the number of iterations needed to achieve the Type-1 approximate optimality.  
  \begin{theorem} \label{Prop:FSS-SC-Type-1-Cons} 
   Suppose $f\in \mathcal{F}_L(\R_+^N)$, $\nabla f$ is $B$-bounded, $\vec{x}(t)$ are generated by Equation~\eqref{eq:prop:dim_step_size-Project-rec-1}, and define the set 
\begin{align}    \label{eq:inProP-FSS-SC-Type-1-MCXep-Proj}
      \bar{\mathcal{X}}_{\alpha}(\epsilon) := \{ \vec{x}\in  \R_+^N \big|  L_{\alpha}(\vec{x}) \leq \epsilon \}.
\end{align}   
 Then for any $\epsilon>0$ and $\gamma\in (0,2\epsilon^2/(\alpha^2 B N^{3/2}))$, there exists $T\in \N\cup \{0\}$ such that $\vec{x}(T)\in \bar{\mathcal{X}}_{\alpha}(\epsilon)$, with $T$ bounded by
                \begin{align}  \label{eq:inProp:-conStep-a-Drec-const}
                             T\leq \left\lceil \frac{2(f(\vec{x}(0))-f^{\star})\alpha^2 B N^{3/2}}{\gamma(2 \epsilon^2-L\gamma \alpha^2 B N^{3/2} )} \right\rceil.
                \end{align}
              The upper bound in Equation~\eqref{eq:inProp:-conStep-a-Drec-const} is minimized when the step-size 
                $\gamma^{\star}=\epsilon^2/(L \alpha^2 B N^{3/2})$ is used.         
  \end{theorem}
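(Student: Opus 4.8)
The plan is to iterate the descent inequality of Lemma~\ref{lemma:binary_feedback_descent_lemma-Pojected} and telescope, using that $f$ is bounded below by $f^{\star}$. First I would record two elementary facts about $\bar{\delta}(\epsilon,\alpha,\gamma)=\bigl(\tfrac{2\epsilon^2}{L\alpha^2 B N^{3/2}}-\gamma\bigr)\tfrac{L}{2}\gamma$: it is strictly positive exactly when $\gamma$ lies in the admissible interval, and it can be rewritten as $\bar{\delta}(\epsilon,\alpha,\gamma)=\frac{\gamma(2\epsilon^2-L\gamma\alpha^2 B N^{3/2})}{2\alpha^2 B N^{3/2}}$, so that $(f(\vec{x}(0))-f^{\star})/\bar{\delta}(\epsilon,\alpha,\gamma)$ is precisely the quantity inside the ceiling in~\eqref{eq:inProp:-conStep-a-Drec-const}.

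The core argument is by contradiction. Set $T^{\star}:=\bigl\lceil (f(\vec{x}(0))-f^{\star})/\bar{\delta}(\epsilon,\alpha,\gamma)\bigr\rceil$ and suppose $\vec{x}(t)\notin\bar{\mathcal{X}}_{\alpha}(\epsilon)$ for every $t\in\{0,1,\ldots,T^{\star}\}$. Then $L_{\alpha}(\vec{x}(t))>\epsilon\ (\ge\epsilon)$ for $t=0,\ldots,T^{\star}-1$, so Lemma~\ref{lemma:binary_feedback_descent_lemma-Pojected} applies with $\vec{x}=\vec{x}(t)$ and yields $f(\vec{x}(t{+}1))\le f(\vec{x}(t))-\bar{\delta}(\epsilon,\alpha,\gamma)$ for each such $t$ (here $\vec{x}(t{+}1)$ is exactly the sign-gradient update~\eqref{eq:prop:dim_step_size-Project-rec-1}). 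Summing over $t=0,\ldots,T^{\star}-1$ gives $f(\vec{x}(T^{\star}))\le f(\vec{x}(0))-T^{\star}\,\bar{\delta}(\epsilon,\alpha,\gamma)\le f^{\star}$. Since $f(\vec{x}(T^{\star}))\ge f^{\star}$, this forces $f(\vec{x}(T^{\star}))=f^{\star}$, i.e.\ $\vec{x}(T^{\star})\in\mathcal{X}^{\star}$, whence $L_{\alpha}(\vec{x}(T^{\star}))=0\le\epsilon$ by Lemma~\ref{Lemma:OptCon} --- contradicting $\vec{x}(T^{\star})\notin\bar{\mathcal{X}}_{\alpha}(\epsilon)$. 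Hence there is some $T\le T^{\star}$ with $\vec{x}(T)\in\bar{\mathcal{X}}_{\alpha}(\epsilon)$, which is the claimed bound~\eqref{eq:inProp:-conStep-a-Drec-const}.

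For the optimal step-size assertion, I would treat the bound~\eqref{eq:inProp:-conStep-a-Drec-const} (dropping the ceiling) as a decreasing function of the denominator $g(\gamma):=\gamma(2\epsilon^2-L\gamma\alpha^2 B N^{3/2})=2\epsilon^2\gamma-L\alpha^2 B N^{3/2}\gamma^2$, a downward parabola in $\gamma$. Solving $g'(\gamma)=2\epsilon^2-2L\alpha^2 B N^{3/2}\gamma=0$ gives $\gamma^{\star}=\epsilon^2/(L\alpha^2 B N^{3/2})$, which lies in the admissible interval, and $g''<0$ confirms it maximizes $g$ and hence minimizes the iteration bound.

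I do not anticipate a real obstacle: the analytic work is concentrated in Lemma~\ref{lemma:binary_feedback_descent_lemma-Pojected}, which is assumed. The only delicate point is the boundary case where $(f(\vec{x}(0))-f^{\star})/\bar{\delta}(\epsilon,\alpha,\gamma)$ is an integer, so the telescoped estimate only yields $f(\vec{x}(T^{\star}))\le f^{\star}$ with equality possible; invoking Lemma~\ref{Lemma:OptCon} rather than relying on a strict decrease closes this gap cleanly.
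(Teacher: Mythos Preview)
Your proposal is correct and follows essentially the same approach as the paper: apply Lemma~\ref{lemma:binary_feedback_descent_lemma-Pojected} while the iterates remain outside $\bar{\mathcal{X}}_{\alpha}(\epsilon)$, telescope, and derive a contradiction with $f\ge f^{\star}$; then maximize the quadratic denominator to obtain $\gamma^{\star}$. The only cosmetic difference is that the paper runs the contradiction one step further (taking $s=T^{\star}+1$ to force $f(\vec{x}(s))<f^{\star}$ strictly), whereas you stop at $s=T^{\star}$ and invoke Lemma~\ref{Lemma:OptCon} to handle the equality case --- both are fine.
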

  \begin{IEEEproof}
Let $\epsilon>0$ be given and choose any  $\gamma\in (0,2\epsilon^2/(L \alpha^2BN^{3/2}))$. 
 Then from Lemma~\ref{lemma:binary_feedback_descent_lemma-Pojected} we have for all $\vec{x}(t)\notin \bar{\mathcal{X}}_{\alpha}(\epsilon)$ that
      \begin{align} \label{eq:inProp-conStep-a-Drec}
        f(\vec{x}(t+1)) \leq f(\vec{x}(t)) - \bar{\delta}(\epsilon,\alpha,\gamma),
     \end{align}
     where $\bar{\delta}(\epsilon,\alpha,\gamma)>0$. 
     By recursively applying Equation~\eqref{eq:inProp-conStep-a-Drec}, it follows that if $\vec{x}(t)\notin \bar{\mathcal{X}}_{\alpha}(\epsilon)$ for all $t<s$  
     then
      \begin{align} \label{eq:inProp-conStep-a-Drec-bbb}
      0\leq  f(\vec{x}(s)) -f^{\star} \leq f(\vec{x}(0)) -f^{\star} - s~ \bar{\delta}(\epsilon,\gamma, \theta).
     \end{align}
    Therefore, there must exist $T\leq \lceil(f(\vec{x}(0))-f^{\star})/ \bar{\delta}(\epsilon,\alpha, \gamma) \rceil$ such that $\vec{x}(T) \in \bar{\mathcal{X}}_{\alpha}(\epsilon)$; otherwise, we can use Equation~\eqref{eq:inProp-conStep-a-Drec-bbb} with $s=\lceil(f(\vec{x}(0))-f^{\star})/ \bar{\delta}(\epsilon,\alpha,\gamma) \rceil+1$ to get the contradiction that  $f(\vec{x}(s)) < f^{\star}$. 
  By rearranging $T\leq\lceil(f(\vec{x}(0))-f^{\star})/ \bar{\delta}(\epsilon,\alpha,\gamma) \rceil$, we obtain the bound in Equation~\eqref{eq:inProp:-conStep-a-Drec-const}.    The optimal step-size $\gamma^{\star}=\epsilon^2/(L \alpha^2 B N^{3/2})$ comes  by maximizing the denominator in Equation~\eqref{eq:inProp:-conStep-a-Drec-const}.   
  \end{IEEEproof}
  The theorem shows that the Type-1 approximate optimality condition can be reached in a finite number of iterations. 
  Moreover, Equation~\eqref{eq:inProp:-conStep-a-Drec-const} provides an upper complexity bound on the algorithm,  showing the number of iterations needed to reach any $\epsilon>0$ accuracy. 
  When  the step-size is $\gamma^{\star}=\epsilon^2/(L \alpha^2 B N^{3/2})$ then the upper bound in Equation~\eqref{eq:inProp:-conStep-a-Drec-const} increases proportionally to  $1/\epsilon^4$ as $\epsilon$ goes to zero. 
  In Section~\ref{sec:Convergence:constantStep}, we show that this bound can be improved for unconstrained problems (where it increases proportionally to  $1/\epsilon^2$ as $\epsilon$ goes to zero).

   \subsubsection{Stopping Condition of \emph{Type-2}}
     We now show that the Type-2 approximate optimality can be reached for any $\epsilon$ accuracy. 
     The following lemma is used to obtain the result. 
\begin{lemma}  \label{prop:fixed-stepsize-type-2-Proj}
   Suppose that $f\in \mathcal{F}_L(\R_+^N)$, $\nabla f$ is $B$-bounded, and the iterates $\vec{x}(t)$ are generated by Equation~\eqref{eq:prop:dim_step_size-Project-rec-1}. 
 Then for all $\epsilon>0$, $\alpha>0$, $\gamma(t) \in (0,\bar{\gamma})$, with $\bar{\gamma}=2\epsilon^2/(\alpha^2 B N^{3/2})$, 
  and $T\in \N$ such that $\vec{x}(T)\in \bar{\mathcal{X}}_{\alpha}(\epsilon)$   
       \begin{align} \label{inProp:SC-T2-a-Proj}    
           f(\vec{x}(t)) \leq \bar{F}_{\alpha}(\epsilon) + \frac{L}{2}  \bar{\gamma}^2, ~\text{ for all $t\geq T$,}
       \end{align}
       where  $\bar{F}_{\alpha}:\R_+\rightarrow \R\cup \{\infty\}$ is given by
        \begin{align}  
           \bar{F}_{\alpha}(\kappa) = \sup\{ f(\vec{x}) | \vec{x}\in \bar{\mathcal{X}}_{\alpha}(\kappa)\}. \label{inProp:SC-T2-a3-Proj}  
        \end{align}
        Moreover, there exists $\kappa>0$ such that for all $\epsilon\in [0,\kappa]$ following holds (i) $\bar{\mathcal{X}}_{\alpha}(\epsilon)$ is bounded and  (ii) $\bar{F}_{\alpha}(\epsilon) <\infty$. It also holds that
              $ \lim_{\epsilon \rightarrow 0^+} \bar{F}_{\alpha}(\epsilon) = f^{\star}$.
 \end{lemma}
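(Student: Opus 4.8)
The statement has three ingredients — the quantitative bound \eqref{inProp:SC-T2-a-Proj} valid once the iterates have entered $\bar{\mathcal{X}}_\alpha(\epsilon)$, the boundedness/finiteness claims (i)--(ii), and the limit $\bar F_\alpha(\epsilon)\to f^\star$ — and I would prove them in that order, since the last two rest on (i). For \eqref{inProp:SC-T2-a-Proj} the plan is to show, by induction on $t\ge T$, that $f(\vec{x}(t))\le\bar F_\alpha(\epsilon)+\tfrac{L}{2}\bar\gamma^2$. The base case is immediate: $\vec{x}(T)\in\bar{\mathcal{X}}_\alpha(\epsilon)$ forces $f(\vec{x}(T))\le\bar F_\alpha(\epsilon)$ by the definition \eqref{inProp:SC-T2-a3-Proj}. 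For the inductive step I would split on whether $\vec{x}(t)\in\bar{\mathcal{X}}_\alpha(\epsilon)$. If $\vec{x}(t)\notin\bar{\mathcal{X}}_\alpha(\epsilon)$, then $L_\alpha(\vec{x}(t))>\epsilon$, so Lemma~\ref{lemma:binary_feedback_descent_lemma-Pojected} applies (with $\gamma(t)\in(0,\bar\gamma)$ small enough that the decrement $\bar\delta(\epsilon,\alpha,\gamma(t))$ is nonnegative) and gives $f(\vec{x}(t{+}1))\le f(\vec{x}(t))\le\bar F_\alpha(\epsilon)+\tfrac{L}{2}\bar\gamma^2$ by the inductive hypothesis. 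If instead $\vec{x}(t)\in\bar{\mathcal{X}}_\alpha(\epsilon)$, then $f(\vec{x}(t))\le\bar F_\alpha(\epsilon)$ and I would bound the one-step change by $L$-smoothness: $f(\vec{x}(t{+}1))\le f(\vec{x}(t))+\langle\nabla f(\vec{x}(t)),\vec{x}(t{+}1)-\vec{x}(t)\rangle+\tfrac{L}{2}\|\vec{x}(t{+}1)-\vec{x}(t)\|^2$. A coordinatewise inspection of \eqref{eq:prop:dim_step_size-Project-rec-1} shows the displacement in coordinate $i$ equals $-\tfrac{\gamma(t)}{\sqrt N}\sign(\nabla_i f(\vec{x}(t)))$, except that when $\nabla_i f(\vec{x}(t))>0$ it may be truncated toward zero; in all cases its sign is opposite to that of $\nabla_i f(\vec{x}(t))$ (or it vanishes), so $\langle\nabla f(\vec{x}(t)),\vec{x}(t{+}1)-\vec{x}(t)\rangle\le0$, while nonexpansiveness of $\lceil\cdot\rceil^+$ gives $\|\vec{x}(t{+}1)-\vec{x}(t)\|\le\tfrac{\gamma(t)}{\sqrt N}\|\sign(\nabla f(\vec{x}(t)))\|\le\gamma(t)<\bar\gamma$. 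Hence $f(\vec{x}(t{+}1))\le\bar F_\alpha(\epsilon)+\tfrac{L}{2}\bar\gamma^2$, closing the induction.

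Part (i) is the crux. Since $f\in\mathcal{F}$ is convex with a nonempty bounded minimizer set, it is coercive on $\R_+^N$, so it suffices to bound $\dist(\vec{x},\mathcal{X}^{\star})$ uniformly over $\vec{x}\in\bar{\mathcal{X}}_\alpha(\kappa)$ for $\kappa$ small. I would combine two estimates. First, an error bound: using the variational inequality for the projection $\lceil\cdot\rceil^+$, convexity of $f$, and the $B$-boundedness of $\nabla f$, one gets, for every $\vec{x}\in\R_+^N$ and every minimizer $\vec{x}^{\star}$, $f(\vec{x})-f^\star\le\tfrac1\alpha L_\alpha(\vec{x})\,\|\vec{x}-\vec{x}^{\star}\|+B\,L_\alpha(\vec{x})$. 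Second, an asymptotically linear lower bound $f(\vec{x})-f^\star\ge c_0\,\dist(\vec{x},\mathcal{X}^{\star})-C_0$ with $c_0>0$, which is where the boundedness of $\mathcal{X}^{\star}$ is genuinely used: along any unit direction $\vec{d}\in\R_+^N$ the slope $t\mapsto\tfrac1t(f(\vec{x}^{\star}+t\vec{d})-f^\star)$ is nondecreasing with a strictly positive limit (otherwise the whole ray would lie in $\mathcal{X}^{\star}$), and a compactness argument over $\vec{d}$ on the compact set $\{\vec{d}\in\R_+^N:\|\vec{d}\|=1\}$ makes both the positive slope $c_0$ and the distance threshold uniform. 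Combining the two inequalities, for $\vec{x}\in\bar{\mathcal{X}}_\alpha(\kappa)$ one gets $c_0\,\dist(\vec{x},\mathcal{X}^{\star})-C_0\le\tfrac\kappa\alpha\dist(\vec{x},\mathcal{X}^{\star})+B\kappa$, so for $\kappa<\alpha c_0$ the distance is bounded by $(C_0+B\kappa)/(c_0-\kappa/\alpha)$, and since $\mathcal{X}^{\star}$ is bounded this makes $\bar{\mathcal{X}}_\alpha(\kappa)$ bounded. I expect the main obstacle to be exactly this step — establishing the asymptotically linear growth bound with a constant uniform in the direction while correctly handling the boundary of $\R_+^N$ in the projection and recession arguments.

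Finally, (ii) and the limit follow once (i) is in hand. For (ii): $L_\alpha$ is continuous ($\nabla f$ is continuous since $f\in C^1$, and $\lceil\cdot\rceil^+$, $\|\cdot\|$ are continuous), so $\bar{\mathcal{X}}_\alpha(\epsilon)=L_\alpha^{-1}([0,\epsilon])\cap\R_+^N$ is closed; by (i) it is compact for $\epsilon\le\kappa$, and the continuous $f$ attains its maximum on it, whence $\bar F_\alpha(\epsilon)<\infty$. For the limit: the sets $\bar{\mathcal{X}}_\alpha(\epsilon)$ are nested decreasing in $\epsilon$ with $\bigcap_{\epsilon>0}\bar{\mathcal{X}}_\alpha(\epsilon)=\bar{\mathcal{X}}_\alpha(0)=\mathcal{X}^{\star}$ by Lemma~\ref{Lemma:OptCon}, so $\bar F_\alpha$ is nondecreasing and $\bar F_\alpha(\epsilon)\ge f^\star$ (since $\mathcal{X}^{\star}\subseteq\bar{\mathcal{X}}_\alpha(\epsilon)$ is nonempty); hence $\ell:=\lim_{\epsilon\to0^+}\bar F_\alpha(\epsilon)=\inf_{\epsilon>0}\bar F_\alpha(\epsilon)\ge f^\star$ exists. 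If $\ell>f^\star$, pick for each $k\ge1/\kappa$ a point $\vec{y}_k\in\bar{\mathcal{X}}_\alpha(1/k)$ with $f(\vec{y}_k)>\ell-1/k$; these lie in the compact set $\bar{\mathcal{X}}_\alpha(\kappa)$, so a subsequence converges to some $\vec{y}^{\star}$ with $L_\alpha(\vec{y}^{\star})=\lim_jL_\alpha(\vec{y}_{k_j})\le\lim_j1/k_j=0$, whence $\vec{y}^{\star}\in\mathcal{X}^{\star}$ and $f(\vec{y}^{\star})=f^\star$; but $f(\vec{y}^{\star})=\lim_jf(\vec{y}_{k_j})\ge\ell>f^\star$, a contradiction. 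Therefore $\ell=f^\star$, which is the claimed limit.
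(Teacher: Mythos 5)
Your proposal is correct, and its overall architecture mirrors the paper's: the induction for \eqref{inProp:SC-T2-a-Proj} (base case from the definition of $\bar F_\alpha$, descent via Lemma~\ref{lemma:binary_feedback_descent_lemma-Pojected} off the set, and the one-step smoothness bound with a nonnegative inner-product term plus nonexpansiveness on the set) is exactly the paper's Step~1 — your coordinatewise sign analysis is just an unpacked version of inequality~\eqref{eq:LemPro-3} in Lemma~\ref{Lemma:projections} — and your compactness-plus-continuity arguments for $\bar F_\alpha(\epsilon)<\infty$ and for $\lim_{\epsilon\to0^+}\bar F_\alpha(\epsilon)=f^\star$ coincide with Steps~2 and~3. (Both you and the paper implicitly need $\gamma(t)\le 2\epsilon^2/(L\alpha^2BN^{3/2})$ for the off-set descent step; the paper's statement of $\bar\gamma$ omits the $L$, as does Theorem~\ref{Prop:FSS-SC-Type-1-Cons}, so your parenthetical caveat matches the intended reading.)

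Where you genuinely diverge is the boundedness claim (i). The paper outsources it to Lemma~\ref{lemma:bounded_Xeps}(ii), a long contradiction argument: KKT conditions and gradient monotonicity give $\|\nabla f(\vec{x})\|\ge\bar\kappa$ outside a ball around $\mathcal{X}^\star$, then a uniform angle bound $\cos(\ang(\nabla f(\vec{x}),\vec{x}-\vec{x}^\star))\ge\bar\kappa/B$, and finally several subsequence extractions to contradict $L_\alpha(\vec{x}^k)\to0$ along an unbounded sequence. You instead combine (a) the error bound $f(\vec{x})-f^\star\le\tfrac1\alpha L_\alpha(\vec{x})\|\vec{x}-\vec{x}^\star\|+B\,L_\alpha(\vec{x})$, which does follow from the projection variational inequality, convexity, and $B$-boundedness (the discarded quadratic term $-\tfrac1\alpha L_\alpha(\vec{x})^2$ has the right sign), with (b) an at-least-linear growth bound $f(\vec{x})-f^\star\ge c_0\dist(\vec{x},\mathcal{X}^\star)-C_0$ on $\R_+^N$, yielding an explicit bound on $\dist(\vec{x},\mathcal{X}^\star)$ over $\bar{\mathcal{X}}_\alpha(\kappa)$ whenever $\kappa<\alpha c_0$. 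This is shorter, more quantitative, and arguably more transparent than the paper's route, which in turn stays entirely elementary (smoothness, monotonicity, KKT) and never invokes growth/recession facts. One caution on your step (b): as stated, restricting the directions to $\vec{d}\in\R_+^N$ with $\|\vec{d}\|=1$ does not cover all far-away feasible points, since $\vec{x}-\vec{x}^\star$ can have negative components when $\vec{x}^\star$ has positive coordinates; either argue that such directions are asymptotically close to $\R_+^N$-directions (the negative part is bounded by $\|\vec{x}^\star\|$), or sidestep the issue by applying the standard fact that $g=f+\delta_{\R_+^N}$ with nonempty bounded argmin satisfies $0\in\operatorname{int}\dom g^*$ and hence $g(\vec{x})\ge c_0\|\vec{x}\|-C_0$, and use lower semicontinuity of the recession function on the unit sphere for the uniform $c_0$. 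With that repair, your proof of (i) is a valid alternative to the paper's.
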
        
 \begin{IEEEproof}
  The proof is provided in Appendix~\ref{App:prop:fixed-stepsize-type-2-Proj}.
 \end{IEEEproof}   
   The lemma is useful in deriving Type-2 optimality conditions since it
   connects the results from Theorem~\ref{Prop:FSS-SC-Type-1-Cons} to the quantity $f(\vec{x}(t))-f^{\star}$ via the function $\bar{F}_{\alpha}(\kappa)$ defined in Equation~\eqref{inProp:SC-T2-a3-Proj}. 
    In particular, the lemma provides a bound on $f(\vec{x}(t))$ that depends on $\bar{F}_{\alpha}(\epsilon)$ and the step-size. 
   Therefore, since $\lim_{\epsilon\rightarrow 0^+}\bar{F}_{\alpha}(\epsilon)=f^{\star}$ we can enforce $f(\vec{x}(t))-f^{\star}$ to be arbitrarily small after some time $T$, i.e., for all $t\geq T$, by choosing a small enough step-size.
The idea is  now  formalized. 
      \begin{theorem} \label{Theo:UnCon-FS-main-Cons}
             Suppose that $f\in \mathcal{F}_L(\R_+^N)$, $\nabla f$ is $B$-bounded, and the iterates $\vec{x}(t)$ are generated by Equation~\eqref{eq:prop:dim_step_size-Project-rec-1}.
       Then  for any $\epsilon>0$
       there exists step-size $\gamma>0$ and $T\in \N$ such that $f(\vec{x}(t))-f^{\star}\leq\epsilon$ for all $t\geq T$.     
    \end{theorem}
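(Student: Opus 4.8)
The plan is to chain the two results just established for the constant step-size regime: Theorem~\ref{Prop:FSS-SC-Type-1-Cons}, which guarantees that the iterates enter the set $\bar{\mathcal{X}}_{\alpha}(\epsilon')$ in finitely many steps, and Lemma~\ref{prop:fixed-stepsize-type-2-Proj}, which both bounds $f(\vec{x}(t))$ from the moment of entry onward and supplies the limiting identity $\lim_{\epsilon'\to 0^{+}}\bar{F}_{\alpha}(\epsilon') = f^{\star}$. Throughout, fix an arbitrary $\alpha>0$ (say $\alpha=1$); it is a free parameter in both cited statements. Let $\epsilon>0$ be the desired accuracy.

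First I would pick an \emph{inner accuracy} $\epsilon'>0$, smaller than the target $\epsilon$, as follows. By Lemma~\ref{prop:fixed-stepsize-type-2-Proj} there is $\kappa>0$ so that for every $\epsilon'\in(0,\kappa]$ the set $\bar{\mathcal{X}}_{\alpha}(\epsilon')$ is bounded and $\bar{F}_{\alpha}(\epsilon')<\infty$, and moreover $\bar{F}_{\alpha}(\epsilon')\to f^{\star}$ as $\epsilon'\to 0^{+}$. The overshoot term in~\eqref{inProp:SC-T2-a-Proj} is $\tfrac{L}{2}\bar{\gamma}^{2}$ with $\bar{\gamma}=2\epsilon'^{2}/(\alpha^{2}BN^{3/2})$, so it too tends to $0$ as $\epsilon'\to 0^{+}$. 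Hence I can choose $\epsilon'\in(0,\kappa]$ small enough that simultaneously $\bar{F}_{\alpha}(\epsilon')-f^{\star}\le \epsilon/2$ and $\tfrac{L}{2}\bar{\gamma}^{2}\le \epsilon/2$. With $\epsilon'$ now fixed, I would take any constant step-size $\gamma>0$ lying in the admissible window of both Theorem~\ref{Prop:FSS-SC-Type-1-Cons} and Lemma~\ref{prop:fixed-stepsize-type-2-Proj} for the accuracy $\epsilon'$, i.e.\ $\gamma\in(0,\bar{\gamma})$ intersected, if necessary, with the slightly smaller interval of Theorem~\ref{Prop:FSS-SC-Type-1-Cons}. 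Theorem~\ref{Prop:FSS-SC-Type-1-Cons} then yields a finite $T\in\N\cup\{0\}$ with $\vec{x}(T)\in\bar{\mathcal{X}}_{\alpha}(\epsilon')$, and Lemma~\ref{prop:fixed-stepsize-type-2-Proj} applied at this $T$ gives $f(\vec{x}(t))\le \bar{F}_{\alpha}(\epsilon')+\tfrac{L}{2}\bar{\gamma}^{2}$ for all $t\ge T$. Subtracting $f^{\star}$ and using the two bounds secured in the choice of $\epsilon'$ yields $f(\vec{x}(t))-f^{\star}\le \epsilon/2+\epsilon/2=\epsilon$ for all $t\ge T$, which is the claim.

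The one point needing care — and the reason the theorem is not an immediate corollary of Lemma~\ref{prop:fixed-stepsize-type-2-Proj} — is the coupling between the inner accuracy $\epsilon'$ and the size of the admissible step-size window: the guaranteed residual is $\bar{F}_{\alpha}(\epsilon')-f^{\star}+\tfrac{L}{2}\bar{\gamma}^{2}$ with $\bar{\gamma}$ itself shrinking like $\epsilon'^{2}$, so one must verify that \emph{both} terms vanish together as $\epsilon'\to 0^{+}$, and that $\gamma$ is selected only after $\epsilon'$ has been pinned down, so that the window $(0,\bar{\gamma})$ is nonempty. The boundedness of $\bar{\mathcal{X}}_{\alpha}(\epsilon')$ for $\epsilon'\le\kappa$ is exactly what makes $\bar{F}_{\alpha}(\epsilon')$ finite and the limit $\bar{F}_{\alpha}(\epsilon')\to f^{\star}$ meaningful. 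All the substantive content — the descent inequality, the finite hitting time, and the limiting identity $\lim_{\epsilon'\to 0^{+}}\bar{F}_{\alpha}(\epsilon')=f^{\star}$ — is already packaged in Theorem~\ref{Prop:FSS-SC-Type-1-Cons} and Lemma~\ref{prop:fixed-stepsize-type-2-Proj}, so what remains is essentially bookkeeping of the two error terms.
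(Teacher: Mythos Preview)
Your proposal is correct and follows exactly the approach of the paper, which simply states that the result ``follows directly from Lemma~\ref{prop:fixed-stepsize-type-2-Proj} and Theorem~\ref{Prop:FSS-SC-Type-1-Cons}.'' You have merely spelled out the bookkeeping (choosing the inner accuracy $\epsilon'$ so that both the $\bar{F}_{\alpha}(\epsilon')-f^{\star}$ term and the $\tfrac{L}{2}\bar{\gamma}^{2}$ overshoot are each at most $\epsilon/2$, and then picking $\gamma$ in the resulting window) that the paper leaves implicit.
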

    \begin{IEEEproof}
       The result follows directly from Lemma~\ref{prop:fixed-stepsize-type-2-Proj} and Theorem~\ref{Prop:FSS-SC-Type-1-Cons}.
    \end{IEEEproof}     
 Theorem~\ref{Theo:UnCon-FS-main-Cons} proves that the Type-2 optimality condition [Eq.~\eqref{Type-2b}] can be achieved for any $\epsilon>0$ in a finite number of iterations. 
  However, unlike for the Type-1 optimality condition, we could not provide an explicit step-size choice or a bound on the number of iterations needed to achieve the $\epsilon>0$ accuracy.

 \subsection{Diminishing Step-Size} \label{subseq:Pro-DSS}

We now consider the diminishing step-size case. 
The following result shows that the step-sizes can be chosen so that Iterates~\eqref{eq:prop:dim_step_size-Project-rec-1}  converge asymptotically to the optimal solutions of Problem~\eqref{main_problem}.

\begin{theorem} \label{prop:dim_step_size-Project}

    Suppose that $f\in \mathcal{F}_L(\R_+^N)$, $\nabla f$ is $B$-bounded, 
   and the iterates $\vec{x}(t)$ are generated by Equation~\eqref{eq:prop:dim_step_size-Project-rec-1} where
    $$\lim_{t\rightarrow \infty} \gamma(t)=0 ~\text{ and }~\sum_{t=0}^{\infty} \gamma(t)=\infty.$$
  Then $\vec{x}(t)$ converges to the set of optimal solutions of the optimization Problem~\eqref{main_problem}, i.e.,   $\lim_{t\rightarrow \infty} \dist(\vec{x}(t),\mathcal{X}^{\star})=0$.
\end{theorem}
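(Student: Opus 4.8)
The plan is to combine the finite-termination guarantee of Theorem~\ref{Prop:FSS-SC-Type-1-Cons}-type reasoning with a careful bookkeeping of the diminishing step-sizes, using the descent Lemma~\ref{lemma:binary_feedback_descent_lemma-Pojected} as the workhorse. Fix a reference accuracy parameter $\alpha>0$ (say $\alpha=1$) and recall the optimality map $L_{\alpha}$ from Lemma~\ref{Lemma:OptCon}: it suffices to show $\liminf_{t\to\infty} L_{\alpha}(\vec{x}(t))=0$ and then upgrade this to $\lim_{t\to\infty}\dist(\vec{x}(t),\mathcal{X}^{\star})=0$. First I would establish the $\liminf$ statement by contradiction: suppose there is $\epsilon>0$ with $L_{\alpha}(\vec{x}(t))\geq\epsilon$ for all $t$ large. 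Since $\gamma(t)\to 0$, for $t$ large enough we have $\gamma(t)<\epsilon^2/(L\alpha^2 B N^{3/2})=\gamma^{\star}$, so Lemma~\ref{lemma:binary_feedback_descent_lemma-Pojected} applies at every such step and gives
\begin{align*}
 f(\vec{x}(t{+}1)) \leq f(\vec{x}(t)) - \left(\frac{2\epsilon^2}{L\alpha^2 B N^{3/2}} - \gamma(t)\right)\frac{L}{2}\gamma(t) \leq f(\vec{x}(t)) - \frac{L}{2}\gamma^{\star}\gamma(t),
\end{align*}
where the last inequality uses $\gamma(t)\leq\gamma^{\star}$ eventually (shrinking the constant by a harmless factor). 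Summing from a large $t_0$ onward and using $\sum_t\gamma(t)=\infty$ forces $f(\vec{x}(t))\to-\infty$, contradicting $f(\vec{x}(t))\geq f^{\star}$. Hence $\liminf_{t\to\infty} L_{\alpha}(\vec{x}(t))=0$.

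Next I would promote the $\liminf$ to a genuine limit. The key auxiliary facts are: (i) the sequence $\{\vec{x}(t)\}$ is bounded — this follows because $f\in\mathcal{F}$ has bounded sublevel sets near $f^{\star}$ (Definition~\ref{assump:problem_feasible} gives $\mathcal{X}^{\star}$ nonempty and bounded, and convexity plus Lipschitz gradient yield boundedness of $\bar{\mathcal{X}}_{\alpha}(\kappa)$ for small $\kappa$ as recorded in Lemma~\ref{prop:fixed-stepsize-type-2-Proj}), combined with the fact that once $\vec{x}(t)$ enters such a sublevel set the per-step objective increase is at most $\frac{L}{2}\gamma(t)^2\to 0$, so the iterates cannot escape to infinity; (ii) the one-step displacement $\|\vec{x}(t{+}1)-\vec{x}(t)\|\leq\gamma(t)\to 0$, so consecutive iterates get arbitrarily close; and (iii) $L_{\alpha}$ is continuous (it is a composition of continuous maps: $\nabla f$ is continuous, the projection $\lceil\cdot\rceil^+$ is nonexpansive hence continuous, and the norm is continuous). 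Now a standard argument: if $L_{\alpha}(\vec{x}(t))$ did not converge to $0$, there would be $\epsilon>0$ and infinitely many "excursions" where $L_{\alpha}$ climbs from below $\epsilon/2$ to above $\epsilon$; by (iii) and (ii) each such excursion spans a number of indices whose $\gamma$-weights are bounded below by a positive constant, so these excursions consume infinite total step-weight $\sum\gamma(t)$, while on each excursion $f$ strictly decreases by a definite amount (again via Lemma~\ref{lemma:binary_feedback_descent_lemma-Pojected}, valid once $\gamma(t)$ is small), so $f$ decreases without bound — contradiction. Therefore $\lim_{t\to\infty}L_{\alpha}(\vec{x}(t))=0$.

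Finally, by Lemma~\ref{Lemma:OptCon}, $L_{\alpha}(\vec{x})=0$ exactly on $\mathcal{X}^{\star}$; combined with boundedness of $\{\vec{x}(t)\}$ and continuity of $L_{\alpha}$, every limit point $\bar{\vec{x}}$ of the sequence satisfies $L_{\alpha}(\bar{\vec{x}})=0$, hence lies in $\mathcal{X}^{\star}$, which gives $\lim_{t\to\infty}\dist(\vec{x}(t),\mathcal{X}^{\star})=0$. The main obstacle I anticipate is step two: controlling the oscillation of $L_{\alpha}$ and ruling out "infinitely many excursions" rigorously, since $f$ is not assumed strongly convex and the update direction $\sign(\nabla f)$ is only an approximate descent direction — the descent inequality from Lemma~\ref{lemma:binary_feedback_descent_lemma-Pojected} must be invoked uniformly over the excursion region, which requires first pinning down that the relevant sublevel set (and hence the range of $\vec{x}(t)$) is bounded and that $L_{\alpha}$ stays bounded away from $0$ on the complement of a neighborhood of $\mathcal{X}^{\star}$ within that set. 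A secondary technical point is handling indices where $\nabla f(\vec{x}(t))=0$ (the iteration stalls), but then $\vec{x}(t)\in\mathcal{X}^{\star}$ already and the conclusion is immediate.
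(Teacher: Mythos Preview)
Your Step~1 (the $\liminf$ argument) is correct and matches the paper exactly. The gap is in your excursion argument for upgrading $\liminf_{t\to\infty}L_{\alpha}(\vec{x}(t))=0$ to $\lim_{t\to\infty}L_{\alpha}(\vec{x}(t))=0$. You claim that the infinitely many excursions each force $f$ down by a fixed amount, so $f\to-\infty$. But between excursions, in the ``low'' phase where $L_{\alpha}(\vec{x}(t))<\epsilon/2$, the per-step bound you have is only $f(\vec{x}(t{+}1))\le f(\vec{x}(t))+\tfrac{L}{2}\gamma(t)^2$, and the hypotheses of the theorem do \emph{not} give $\sum_t\gamma(t)^2<\infty$ (e.g.\ $\gamma(t)=1/\sqrt{t}$). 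So the cumulative increase during low phases can be infinite and may fully offset the excursion decreases; ``$f$ decreases without bound'' does not follow. The same issue undermines your boundedness sketch (i): ``per-step objective increase is at most $\tfrac{L}{2}\gamma(t)^2\to 0$'' does not by itself stop the iterates from drifting out of a sublevel set.

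The paper sidesteps this by \emph{not} trying to prove $L_{\alpha}(\vec{x}(t))\to 0$ directly. After Step~1 it uses the full strength of Lemma~\ref{prop:fixed-stepsize-type-2-Proj}: once some $\vec{x}(T)\in\bar{\mathcal{X}}_{\alpha}(\epsilon)$ and all later step-sizes are $\le\bar\gamma$, the \emph{uniform} bound $f(\vec{x}(t))\le \bar F_{\alpha}(\epsilon)+\tfrac{L}{2}\bar\gamma^2$ holds for \emph{all} $t\ge T$ (this inequality absorbs the low-phase increases inductively). Letting $\epsilon,\bar\gamma\to 0$ gives $f(\vec{x}(t))\to f^{\star}$. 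Boundedness is then obtained by showing the $\gamma$-weight between consecutive visits to $\bar{\mathcal{X}}_{\alpha}(\kappa)$ is uniformly bounded (a descent-based estimate), and the final distance convergence comes from limit points and continuity of $f$, not of $L_{\alpha}$. You already cite Lemma~\ref{prop:fixed-stepsize-type-2-Proj}, but only for compactness of $\bar{\mathcal{X}}_{\alpha}(\kappa)$; the missing ingredient in your plan is its inequality~\eqref{inProp:SC-T2-a-Proj}, which is exactly what controls the low-phase behaviour you are worried about.
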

\begin{IEEEproof}
 Proof is provided in Appendix~\ref{App:subseq:Pro-DSS}
\end{IEEEproof}
 The step-size choice in the theorem is also necessary to ensure that $\lim_{t\rightarrow \infty} \dist(\vec{x}(t),\mathcal{X}^{\star})=0$ holds for all $f\in \mathcal{F}_+(\R_+^N)$ with $\nabla f$ being $B$-bounded. 
    To see why, consider the scalar function
    \begin{align}  \label{eq:flowerbound}
         f(x)=\begin{cases} 0.5(x-1)^2 & \text{ if } |x-1|\leq1 \\
                                       |x-1|-0.5 & \text{ otherwise}.\end{cases}
    \end{align}
    Then $f$ has the unique optimizer $x^{\star}=1$ and satisfies the assumptions of Theorem~\ref{prop:dim_step_size-Project}.

    Let us first show that if $\lim_{t\rightarrow \infty } \gamma(t) \neq 0$ then $\lim_{t\rightarrow \infty} \dist(x(t),\mathcal{X}^{\star}) {\neq}0$. 
    If $\gamma(t)>0$ does not converge to zero
    then there exists $I\in (0,1)$ and a subsequence $t_k$ such that $\gamma(t_k)\geq I$ for infinitely many $t\in \N$. 
    Then either $|x(t_k+1)-x^{\star}|$ or $|x(t_k)-x^{\star}|$ must be larger than or equal to $I/2$ for all $k\in \N$ because if  $|x(t_k)-x^{\star}|\leq I/2$ then $|x(t_k+1)-x^{\star}|\geq I/2$.
  Therefore,   $|x(t)-x^{\star}|\geq I/2$ for infinitely many $t\in\N$ so $\limsup_{t\rightarrow \infty} \dist(x(t),\mathcal{X}^{\star})\geq I/2$.      
   Let us  next show that if $\sum \gamma(t)<\infty$ then $\lim_{t\rightarrow \infty} \dist(x(t),\mathcal{X}^{\star}) {\neq}0$. 
   Take $x(0)=  2+ \sum_{t=0}^{\infty} \gamma(t)$.
   Then $x(t)\geq \vec{x}(0)-\sum_{\tau=0}^t \gamma(t) \geq 2$ for all $t\in \N$, so $\liminf_{t\rightarrow \infty} \dist(x(t),\mathcal{X}^{\star})\geq 1$.

 The advantage  of using diminishing step-sizes, as in Theorem~\ref{prop:dim_step_size-Project},   
  is that the algorithm can asymptotically converge to the set of optimal values.  
 Moreover,  diminishing step-size rules can be implemented even if problem parameters such as the Lipschitz constant $L$ are unknown, unlike when the step-size is constant. 
 On the other hand, it is more complicated to characterize the convergence rate, similar to Equation~\eqref{eq:inProp:-conStep-a-Drec-const}, when diminishing step-sizes are used.

 \section{Convergence - General Quantization}  \label{sec:Convergence}

 In the previous section, we studied quantized gradient methods where a particular quantization based on the sign of the gradient was used for constrained optimization problems.
 As we discussed in Section~\ref{subsec:FLPQ}, for unconstrained problems, a more general class of quantizations called $\theta$-covers  (Definition~\ref{defin:min_angle}) ensures that the quantized gradient methods can minimize any $f\in\mathcal{F}_L(\R^N)$.  
 In this section we formally prove this, i.e., if the quantization is a $\theta$-cover then the quantized gradient methods converge (i) approximately to an optimal solution when the step-sizes are constant and (ii) asymptotically to an optimal solution when the step-sizes are non summable and converge to zero.  Moreover, we study how the quantization fineness, i.e., $\theta$,  affects the algorithm convergence. 
  We first consider the case when the step-sizes are fixed, i.e., $\gamma(t)=\gamma$, in subsection~\ref{sec:Convergence:constantStep}.
  Then in subsection~\ref{sec:Convergence:DimStep} we consider diminishing step-sizes.
  
  \subsection{Constant Step-Size} \label{sec:Convergence:constantStep}

   Similar to Section~\ref{subseq:Pro-CSS},
 we consider the following two types of approximate optimality conditions:  
   \begin{align}
   &  \mbox{Type-1:}   && ||\nabla f (\vec{x} ) || \leq \epsilon,  \label{Type-1} \\ 
   &  \mbox{Type-2:}  &&f(\vec{x}(t)) -f^{\star} \leq \epsilon.   \label{Type-2}  
    \end{align}

 \subsubsection{Stopping Condition of \emph{Type-1}}

  We start by showing that Type-1 approximate optimality can be achieved for any $\epsilon>0$ in a finite number of iterations.   Further, we provide a lower and upper bound on the number of iterations needed to achieve the $\epsilon$-accuarcy (that depends on $\theta$).     
 A key result used to obtain the result is  the following lemma.   \begin{lemma}  \label{lemma:binary_feedback_descent_lemma}
    Suppose $f\in \mathcal{F}_L(\R^N)$,       $\epsilon>0$,  
    $\theta \in [0,\pi/2)$,
    $\vec{x}\in \R^N$,  
    $||\nabla f(\vec{x})|| {>} \epsilon$, 
    and $\vec{d}\in \mathcal{S}^{N-1}$ where $\ang( \nabla f( \vec{x}),\vec{d} ) {\leq} \theta$.
   Then
    \begin{align}
       f(\vec{x}-\gamma \vec{d}) \leq f(\vec{x})- \delta(\epsilon,\gamma, \theta),
    \end{align}
    where 
    \begin{align} \label{eq:inLemmaBD-delta}
      \delta(\epsilon,\gamma, \theta)= \left(  \frac{2 \cos(\theta) \epsilon}{L}  -\gamma    \right)  \frac{L}{2}\gamma.     \end{align}
    Clearly, $\delta(\epsilon,\gamma, \theta)> 0$ when $\gamma \in (0,2 \cos(\theta) \epsilon/L)$.  
 \end{lemma}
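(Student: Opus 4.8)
The plan is to use the standard descent lemma for functions with $L$-Lipschitz continuous gradients together with the angle condition relating $\vec{d}$ and $\nabla f(\vec{x})$. First I would recall that for $f\in\mathcal{F}$, the $L$-smoothness gives the quadratic upper bound
\begin{align*}
  f(\vec{x}-\gamma\vec{d}) \leq f(\vec{x}) - \gamma \langle \nabla f(\vec{x}), \vec{d}\rangle + \frac{L\gamma^2}{2}\|\vec{d}\|^2.
\end{align*}
Since $\vec{d}\in\mathcal{S}^{N-1}$ we have $\|\vec{d}\|^2=1$, so the last term is $L\gamma^2/2$.

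Next I would lower bound the inner product term. By the definition of the angle, $\langle \nabla f(\vec{x}),\vec{d}\rangle = \|\nabla f(\vec{x})\|\cos(\ang(\nabla f(\vec{x}),\vec{d}))$. Since $\ang(\nabla f(\vec{x}),\vec{d})\leq\theta$ with $\theta\in[0,\pi/2)$, and cosine is decreasing on $[0,\pi/2]$, we get $\cos(\ang(\nabla f(\vec{x}),\vec{d}))\geq\cos(\theta)>0$. Combined with the hypothesis $\|\nabla f(\vec{x})\|>\epsilon$, this yields $\langle\nabla f(\vec{x}),\vec{d}\rangle > \cos(\theta)\,\epsilon$. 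Substituting both bounds into the descent inequality gives
\begin{align*}
  f(\vec{x}-\gamma\vec{d}) < f(\vec{x}) - \gamma\cos(\theta)\epsilon + \frac{L\gamma^2}{2} = f(\vec{x}) - \Bigl(\frac{2\cos(\theta)\epsilon}{L}-\gamma\Bigr)\frac{L}{2}\gamma,
\end{align*}
which is exactly $f(\vec{x})-\delta(\epsilon,\gamma,\theta)$ with $\delta$ as defined in~\eqref{eq:inLemmaBD-delta}. The final remark that $\delta(\epsilon,\gamma,\theta)>0$ for $\gamma\in(0,2\cos(\theta)\epsilon/L)$ is immediate from inspecting the sign of the factor $2\cos(\theta)\epsilon/L-\gamma$.

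I do not anticipate a serious obstacle here; this is essentially a one-line application of the descent lemma. The only points requiring mild care are: (i) making sure $\nabla f(\vec{x})\neq\vec{0}$ so that the angle is well-defined — this is guaranteed by $\|\nabla f(\vec{x})\|>\epsilon>0$; and (ii) the direction of the inequality, i.e., that we get a strict or non-strict inequality — the statement uses $\leq$, which is safe since we can absorb the strictness. If anything, the subtle part is simply citing the descent lemma correctly, noting it requires only convexity-free $L$-smoothness (though $f\in\mathcal{F}$ supplies both), and being careful that $\vec{d}$ is a unit vector so the quadratic term is clean.
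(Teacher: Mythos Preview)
Your proposal is correct and matches the paper's proof essentially line for line: the paper also applies the descent lemma, uses $\|\vec{d}\|=1$, and then bounds $\langle \nabla f(\vec{x}),\vec{d}\rangle \geq \cos(\theta)\epsilon$ via the angle condition and $\|\nabla f(\vec{x})\|>\epsilon$.
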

 \begin{IEEEproof}
  The proof is provided in Appendix~\ref{App:lemma:binary_feedback_descent_lemma}.
 \end{IEEEproof} 
 The lemma shows that if $||\nabla f(\vec{x})||>0$ and  $\ang( \nabla f( \vec{x}),\vec{d} ) \leq \theta<\pi/2$  for some $\vec{x}\in\R^N$ then 
  the objective function value can be decreased by taking a step in the direction $-\vec{d}$, i.e., $f(\vec{x}-\gamma \vec{d})\leq f(\vec{x})$ for small enough $\gamma>0$. 
  Therefore, if $\mathcal{D}$ is a $\theta$-cover then we can always find $\vec{d}\in \mathcal{D}$ and a step-size $\gamma>0$ such that $f(\vec{x}-\gamma \vec{d})\leq f(\vec{x})$.
 We now use this intuition from Lemma~\ref{lemma:binary_feedback_descent_lemma} to provide the upper and lower bounds on the number of iterations that are needed to reach the Type-1 optimality condition.

  \begin{theorem} \label{Prop:FSS-SC-Type-1} 
     Suppose that $f\in \mathcal{F}_L(\R^N)$, $\mathcal{D}$ is a $\theta$-cover (Definition~\ref{defin:min_angle}), the iterates $\vec{x}(t)$ are generated by Equation~\eqref{gradient_dir_quant},  and define the set 
     \begin{align}  \label{eq:inProP-FSS-SC-Type-1-MCXep}
       \mathcal{X}(\epsilon)=\{ \vec{x}\in \R^N \big| || \nabla f(\vec{x})|| \leq \epsilon \}.
      \end{align}        
     Then the following holds:
    \begin{enumerate}[a)]
       \item For any $\epsilon>0$, if $\gamma\in (0,2\cos(\theta) \epsilon / L)$ then there exists $T\in \N\cup \{0\}$ such that $\vec{x}(T)\in \mathcal{X}(\epsilon)$, with $T$ bounded by
                \begin{align}  \label{eq:inProp:-conStep-a-Drec}
                             T\leq \left\lceil \frac{2(f(\vec{x}(0))-f^{\star})}{\gamma(2 \cos(\theta)\epsilon-L\gamma)} \right\rceil.
                \end{align}
              The upper bound in Equation~\eqref{eq:inProp:-conStep-a-Drec} is minimized with the optimal step-size $\gamma^{\star}=\cos(\theta) \epsilon/L$.

        \item Given a fixed step-size $\gamma>0$ and scalar $\kappa>0$, if we choose 
          \begin{align}  \label{inProp:epsiloneq}
              \epsilon(\kappa,\gamma)=\kappa+\gamma L/(2\cos \theta)
          \end{align}      
         then there exists $T\in \N\cup \{0\}$ such that $\vec{x}(T)\in \mathcal{X}( \epsilon(\kappa,\gamma))$, with $T$ bounded by
                \begin{align}  \label{eq:inProp:-conStep-b-Drec}
                           T\leq \left\lceil \frac{f(\vec{x}(0))-f^{\star}}{\cos(\theta) \gamma \kappa} \right\rceil.
                \end{align}
                
         \item (Lower Bound on $T$) For any step-size $\gamma>0$ and $\epsilon>0$ if $\vec{x}(T)\in \mathcal{X}(\epsilon)$ then 
          \begin{align}
                   \frac{||\nabla f(\vec{x}(0))||-\epsilon}{\gamma L} \leq T
          \end{align}

    \end{enumerate} 

  \end{theorem}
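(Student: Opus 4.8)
The proof will combine the descent Lemma~\ref{lemma:binary_feedback_descent_lemma} with a telescoping/accounting argument, treating parts a), b), c) in turn. For part a), fix $\epsilon>0$ and $\gamma\in(0,2\cos(\theta)\epsilon/L)$, so that $\delta(\epsilon,\gamma,\theta)>0$. Since $\mathcal{D}$ is a $\theta$-cover, as long as $\vec{x}(t)\notin\mathcal{X}(\epsilon)$, i.e. $\|\nabla f(\vec{x}(t))\|>\epsilon$, there is some $\vec{d}(t)\in\mathcal{D}$ with $\ang(\nabla f(\vec{x}(t)),\vec{d}(t))\le\theta$, and Lemma~\ref{lemma:binary_feedback_descent_lemma} gives $f(\vec{x}(t+1))\le f(\vec{x}(t))-\delta(\epsilon,\gamma,\theta)$. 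Arguing by contradiction exactly as in the proof of Theorem~\ref{Prop:FSS-SC-Type-1-Cons}: if $\vec{x}(t)\notin\mathcal{X}(\epsilon)$ for all $t<s$, then $0\le f(\vec{x}(s))-f^{\star}\le f(\vec{x}(0))-f^{\star}-s\,\delta(\epsilon,\gamma,\theta)$, which is violated once $s$ exceeds $(f(\vec{x}(0))-f^{\star})/\delta(\epsilon,\gamma,\theta)$. Hence some $T\le\lceil(f(\vec{x}(0))-f^{\star})/\delta(\epsilon,\gamma,\theta)\rceil$ works; substituting the explicit $\delta$ from Equation~\eqref{eq:inLemmaBD-delta} and simplifying $\delta=\gamma(2\cos(\theta)\epsilon-L\gamma)/2$ yields the bound~\eqref{eq:inProp:-conStep-a-Drec}. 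The optimal step-size is found by maximizing the denominator $\gamma(2\cos(\theta)\epsilon-L\gamma)$ over $\gamma$, a one-variable concave quadratic whose maximizer is $\gamma^{\star}=\cos(\theta)\epsilon/L$.

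For part b), the idea is to run part a) "in reverse": we are given $\gamma$ and a target slack $\kappa$, and we want to name the accuracy $\epsilon$ that this $\gamma$ can deliver. Setting $\epsilon(\kappa,\gamma)=\kappa+\gamma L/(2\cos\theta)$ is precisely the choice that makes $\gamma$ lie in the admissible interval $(0,2\cos(\theta)\epsilon/L)$ with a controlled margin: one checks $2\cos(\theta)\epsilon-L\gamma = 2\cos(\theta)\kappa$, so $\delta(\epsilon,\gamma,\theta)=\gamma\cdot 2\cos(\theta)\kappa/2 = \cos(\theta)\gamma\kappa$. Plugging this into the bound $T\le\lceil(f(\vec{x}(0))-f^{\star})/\delta\rceil$ from part a) gives~\eqref{eq:inProp:-conStep-b-Drec} directly.

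Part c) is a lower bound and needs a separate, elementary argument based on how fast $\|\nabla f\|$ can change along the iterates. Since $\nabla f$ is $L$-Lipschitz and $\|\vec{d}(t)\|=1$ (as $\vec{d}(t)\in\mathcal{S}^{N-1}$), each step moves $\vec{x}$ by at most $\gamma$, so $\|\nabla f(\vec{x}(t+1))\|\ge\|\nabla f(\vec{x}(t))\|-L\gamma$ by the reverse triangle inequality and Lipschitz continuity. Iterating, $\|\nabla f(\vec{x}(T))\|\ge\|\nabla f(\vec{x}(0))\|-T L\gamma$; if $\vec{x}(T)\in\mathcal{X}(\epsilon)$ then the left side is $\le\epsilon$, giving $\epsilon\ge\|\nabla f(\vec{x}(0))\|-TL\gamma$, which rearranges to $T\ge(\|\nabla f(\vec{x}(0))\|-\epsilon)/(\gamma L)$. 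I expect no serious obstacle here; the only mild subtlety is making sure part a)'s contradiction argument is phrased so that the "first exit time" $T$ is well-defined (it is, because the set of exit times is nonempty once we have the contradiction, and $\N\cup\{0\}$ is well-ordered), and in part b) verifying the admissibility margin so that $\delta>0$ strictly. The crux of the whole theorem is really Lemma~\ref{lemma:binary_feedback_descent_lemma}, which is assumed; given it, the three parts are short bookkeeping.
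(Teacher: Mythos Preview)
Your proposal is correct and follows essentially the same approach as the paper: part a) reuses the contradiction/telescoping argument of Theorem~\ref{Prop:FSS-SC-Type-1-Cons} with Lemma~\ref{lemma:binary_feedback_descent_lemma} in place of Lemma~\ref{lemma:binary_feedback_descent_lemma-Pojected}, part b) is obtained by substituting $\epsilon(\kappa,\gamma)$ into the bound~\eqref{eq:inProp:-conStep-a-Drec} (your computation $2\cos(\theta)\epsilon-L\gamma=2\cos(\theta)\kappa$ is exactly what is needed), and part c) is the same Lipschitz/reverse-triangle-inequality argument the paper uses.
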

  \begin{IEEEproof}
 a) The proof follows the same arguments as the proof of Theorem~\ref{Prop:FSS-SC-Type-1-Cons} using Lemma~\ref{lemma:binary_feedback_descent_lemma} in place of Lemma~\ref{lemma:binary_feedback_descent_lemma-Pojected}.

  b) The result can be obtained by substituting $\epsilon(\kappa,\gamma)$ in Equation~\eqref{eq:inProp:-conStep-a-Drec}.

  c)    
   Using the fact that the gradient $\nabla f$ is $L$-Lipschitz continuous we have
   $$ ||\nabla f(\vec{x}(t))-\nabla f(\vec{x}(t+1))||\leq L ||\vec{x}(t)-\vec{x}(t+1)||\leq L \gamma.$$
   Therefore, using the triangle inequality, we have 
    \begin{align*}
        ||\nabla f(\vec{x}(t))||-L\gamma \leq || \nabla f(\vec{x}(t{+}1))||~~\text{ for all $t\in \N$}.
    \end{align*}
   Recursively applying the inequality gives 
       \begin{align*}
        ||\nabla f(\vec{x}(0))||-L\gamma t \leq || \nabla f(\vec{x}(t))||.
    \end{align*}
    Hence, 
    $||\nabla f(\vec{x}(t)) || \leq \epsilon$ can only hold when $t\geq (||\nabla f(\vec{x}(0)) || -\epsilon)/(L\gamma)$.
  \end{IEEEproof}
   Theorem~\ref{Prop:FSS-SC-Type-1}-a) proves that if $\mathcal{D}$ is a $\theta$-cover then the Type-1 optimality condition [Eq.~\eqref{Type-1}] can be achieved with $\epsilon$-accuracy in a finite number of iterations, for all $\epsilon>0$. 
  Moreover, the theorem gives an upper bound on the number of iterations needed to achieve such $\epsilon$-accuracy.
   This bound decreases as $\theta$ decreases, i.e., as the quantization becomes finer.      Even though the bound in Equation~\eqref{eq:inProp:-conStep-a-Drec} is on the number of iteration, since $\log_2(|\mathcal{D}|)$ bits are communicated per iteration, the results shows that in the worst case scenario
   $$\left\lceil\frac{2(f(\vec{x}(0))-f^{\star})}{\gamma(2 \cos(\theta)\epsilon-L\gamma)} \right\rceil \log_2(|\mathcal{D}|) \text{ bits}$$ 
   are needed to find $\vec{x}\in\R^N$ such that  $||\nabla f(\vec{x})||\leq \epsilon$.

  Theorem~\ref{Prop:FSS-SC-Type-1}-b) demonstrates what $\epsilon$-accuracy can be achieved for a given step-size. 
 The parameter $\kappa$ captures a trade-off between the $\epsilon$-accuracy  and the number of iterations needed to achieve that $\epsilon$-accuracy. 
   By optimizing over both $\gamma$ and $\kappa$ in Theorem~\ref{Prop:FSS-SC-Type-1}-b) we can find an optimal bound on the accuracy $\epsilon$ that can be guaranteed in $T$ iterations. 
   That is to find $\gamma$ and $\kappa$ that solve the following optimization problem.
        \begin{equation} \label{eq:inProof:Type-1-OP}
    \begin{aligned}
      & \underset{\kappa, \gamma}{\text{minimize}}
      & & \epsilon(\kappa,\gamma) = \kappa+ \frac{ L}{2 \cos(\theta)} \gamma \\
      & \text{subject to}
      & & \frac{f(\vec{x}(0))-f^{\star}}{ \cos(\theta ) \gamma \kappa} \leq T, \\
      &&& \gamma, \kappa > 0.
    \end{aligned}
  \end{equation}
  Formally, this bound is given as follows.
   \begin{corollary} \label{prop:const_step-size:optimalAcc-given_itnum}

      For any $T\in \N$ we have:
       
      i) The minimal bound $\epsilon(\kappa,\gamma)$ [Eq.~\eqref{inProp:epsiloneq}] achieved in $T$ iterations, i.e., the solution to Problem~\eqref{eq:inProof:Type-1-OP}, is
     \begin{align} \label{eq:inProof:Type-1-OP-sol1}
           \epsilon^{\star} = \frac{\sqrt{2 L (f(\vec{x}(0)){-}f^{\star})}}{\cos(\theta)\sqrt{T}} 
    \end{align}
     where the corresponding optimal $\gamma$ and $\kappa$ are
  \begin{align} \label{eq:inProof:Type-1-OP-sol2}
            \gamma^{\star}{=}\sqrt{\frac{2(f(\vec{x}(0)){-}f^{\star})}{LT}} \text{ and } 
            \kappa^{\star} {=}  \frac{\sqrt{ L (f(\vec{x}(0)){-}f^{\star})}}{\cos(\theta) \sqrt{2T}}.
    \end{align}

   ii) If the step-size $\gamma$ is chosen as $\gamma^{\star}$ in Equation~\eqref{eq:inProof:Type-1-OP-sol2} then  
    \begin{align} \label{eq:inThOptBou}
       ||\nabla f(\vec{x}^{\star}(T)) ||\leq \epsilon^{\star} 
    \end{align}
   where $\epsilon^{\star}$ is as in Equation~\eqref{eq:inProof:Type-1-OP-sol1} and
    \begin{align} \label{eq:inThOpt}
        \vec{x}^{\star}(T)= \underset{\vec{x}\in \{ \vec{x}(t) |t=0,\ldots,T\} }{\text{argmin}} || \nabla f(\vec{x})||.
    \end{align}
  \end{corollary}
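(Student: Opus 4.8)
The plan is to treat Corollary~\ref{prop:const_step-size:optimalAcc-given_itnum} as a direct consequence of Theorem~\ref{Prop:FSS-SC-Type-1}-b), so the work is essentially solving the two-variable optimization Problem~\eqref{eq:inProof:Type-1-OP} and then reinterpreting its solution in terms of the iterate sequence. First I would analyze Problem~\eqref{eq:inProof:Type-1-OP}. The objective $\epsilon(\kappa,\gamma)=\kappa + \frac{L}{2\cos\theta}\gamma$ is increasing in both variables, while the constraint $\frac{f(\vec{x}(0))-f^{\star}}{\cos(\theta)\gamma\kappa}\le T$ is equivalent to $\gamma\kappa \ge \frac{f(\vec{x}(0))-f^{\star}}{\cos(\theta)T}$. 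So at the optimum the constraint is tight, i.e., $\gamma\kappa = c$ where $c:=\frac{f(\vec{x}(0))-f^{\star}}{\cos(\theta)T}$, and we are minimizing $\kappa + \frac{L}{2\cos\theta}\gamma$ subject to $\gamma\kappa=c$. This is a textbook AM--GM (or one-variable calculus) exercise: writing $\gamma = c/\kappa$ and differentiating $g(\kappa)=\kappa+\frac{Lc}{2\cos(\theta)\kappa}$ gives the minimizer $\kappa^{\star}=\sqrt{\frac{Lc}{2\cos\theta}}$, hence $\gamma^{\star}=c/\kappa^{\star}=\sqrt{\frac{2c\cos\theta}{L}}$, and $\epsilon^{\star}=2\kappa^{\star}=2\sqrt{\frac{Lc}{2\cos\theta}}=\sqrt{\frac{2Lc}{\cos\theta}}$. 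Substituting $c=\frac{f(\vec{x}(0))-f^{\star}}{\cos(\theta)T}$ yields $\gamma^{\star}=\sqrt{\frac{2(f(\vec{x}(0))-f^{\star})}{LT}}$, $\kappa^{\star}=\frac{\sqrt{L(f(\vec{x}(0))-f^{\star})}}{\cos(\theta)\sqrt{2T}}$, and $\epsilon^{\star}=\frac{\sqrt{2L(f(\vec{x}(0))-f^{\star})}}{\cos(\theta)\sqrt{T}}$, exactly matching Equations~\eqref{eq:inProof:Type-1-OP-sol1} and~\eqref{eq:inProof:Type-1-OP-sol2}. This settles part~(i). I would double-check the edge case $f(\vec{x}(0))=f^{\star}$, where $\vec{x}(0)$ is already optimal and the statement holds trivially.

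For part~(ii), I would invoke Theorem~\ref{Prop:FSS-SC-Type-1}-b) with the step-size $\gamma=\gamma^{\star}$ and the parameter $\kappa=\kappa^{\star}$. The theorem guarantees the existence of some $T'\le \lceil\frac{f(\vec{x}(0))-f^{\star}}{\cos(\theta)\gamma^{\star}\kappa^{\star}}\rceil$ with $\vec{x}(T')\in\mathcal{X}(\epsilon(\kappa^{\star},\gamma^{\star}))$, i.e., $\|\nabla f(\vec{x}(T'))\|\le\epsilon(\kappa^{\star},\gamma^{\star})=\epsilon^{\star}$. Since $\gamma^{\star}\kappa^{\star}=c=\frac{f(\vec{x}(0))-f^{\star}}{\cos(\theta)T}$, the bound on $T'$ becomes $T'\le\lceil T\rceil = T$, so $T'\in\{0,1,\ldots,T\}$. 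Therefore the minimizer $\vec{x}^{\star}(T)$ over the window $\{\vec{x}(t)\mid t=0,\ldots,T\}$ defined in Equation~\eqref{eq:inThOpt} satisfies $\|\nabla f(\vec{x}^{\star}(T))\|\le\|\nabla f(\vec{x}(T'))\|\le\epsilon^{\star}$, which is Equation~\eqref{eq:inThOptBou}.

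There is no serious obstacle here — this corollary is a packaging result — but the one point that needs a little care is the ceiling: Theorem~\ref{Prop:FSS-SC-Type-1}-b) delivers $T'\le\lceil\frac{f(\vec{x}(0))-f^{\star}}{\cos(\theta)\gamma\kappa}\rceil$, and we have arranged $\frac{f(\vec{x}(0))-f^{\star}}{\cos(\theta)\gamma^{\star}\kappa^{\star}}=T$ with $T$ already a natural number, so $\lceil T\rceil=T$ and the index $T'$ genuinely lies in the window $\{0,\ldots,T\}$ over which $\vec{x}^{\star}(T)$ is chosen. A secondary subtlety worth a sentence is that $\gamma^{\star}$ must be an admissible step-size for part~(b) of the theorem, which it is: Theorem~\ref{Prop:FSS-SC-Type-1}-b) imposes no upper bound on $\gamma$ (unlike part~(a)), it simply reports which accuracy $\epsilon(\kappa,\gamma)$ that choice attains. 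So the proof is a short chain: solve the convex program by AM--GM, read off the closed-form optimizers, then feed them back into Theorem~\ref{Prop:FSS-SC-Type-1}-b) and observe the iteration budget collapses to exactly $T$.
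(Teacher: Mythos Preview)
Your proposal is correct and arrives at the same result as the paper, but the mechanics for part~(i) differ slightly. The paper's proof first argues that Problem~\eqref{eq:inProof:Type-1-OP} is convex (by rewriting the constraint as $c/\gamma - \kappa \le 0$ with $1/\gamma$ convex on $\gamma>0$) and then simply verifies that the stated $(\kappa^{\star},\gamma^{\star})$ satisfy the KKT conditions with Lagrange multiplier $\lambda^{\star}=1$. You instead argue directly that the constraint must be active because the objective is increasing in both variables, reduce to a one-variable problem via $\gamma=c/\kappa$, and solve by calculus/AM--GM. Both routes are elementary and equally valid; yours is more self-contained (no appeal to KKT theory), while the paper's is terser (just a verification). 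For part~(ii) the paper says only ``follows directly from part~(i)'', whereas you spell out the application of Theorem~\ref{Prop:FSS-SC-Type-1}-b) and correctly handle the ceiling by noting $\gamma^{\star}\kappa^{\star}=c$ forces $\lceil T\rceil=T$, so the guaranteed index $T'$ lies in $\{0,\ldots,T\}$. Your added care there is warranted and makes the argument cleaner.
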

  \begin{IEEEproof}
  i)
  First note that Problem~\eqref{eq:inProof:Type-1-OP} is convex, which can be seen by equivalently writing it as
  \begin{equation} \label{eq:inProof:Type-1-OP-2}
    \begin{aligned}
      & \underset{\kappa, \gamma}{\text{minimize}}
      & & \kappa + \frac{L}{2 \cos(\theta)} \gamma \\
      & \text{subject to}
      & & \left( \frac{f(\vec{x}(0))-f^{\star}}{ \cos( \theta) T  }\right) \frac{1}{\gamma} - \kappa  \leq  0, \\
      &&& \gamma, \kappa >0,
    \end{aligned}
  \end{equation} 
  and recalling that the reciprocal $1/\gamma$ is a convex function for $\gamma>0$.
  It can be checked that $\kappa^{\star}$ and $\gamma^{\star}$ satisfy the KKT condition with the Lagrangian multiplier $\lambda^{\star}{=}1$.

  ii) Follows directly from part i).
  \end{IEEEproof}
    In addition to minimizing the bound in Equation~\eqref{inProp:epsiloneq}, Corollary~\ref{prop:const_step-size:optimalAcc-given_itnum} gives insights into the convergence of  Iterations~\eqref{gradient_dir_quant}. 
   For example, when $T$ is fixed, then the upper bound in Equation~\eqref{eq:inProof:Type-1-OP-sol1} gets larger as $\theta$ decreases. 
   As a result, when the quantization set $\mathcal{D}$ becomes coarser then less accuracy can be ensured. 
   Moreover, the results show that $||\nabla f (\vec{x}^{\star}(T))||$ converges, in the worst case, at the rate $o(1/\sqrt{T})$ to $0$.

  In Corollary~\ref{prop:const_step-size:optimalAcc-given_itnum} we used the step-size  $\gamma^{\star}$  given in Equation~\eqref{eq:inProof:Type-1-OP-sol2}.
 To compute $\gamma^{\star}$ the optimal objective function value  $f^{\star}$ is needed, which is usually not available prior to solving Problem~\eqref{main_problem}. 
  However,  
  some bounds on the quantity $f(\vec{x}(0))-f^{\star}$ are often available. 
  Any such upper bound  $K\in \R$, with $K\geq f(\vec{x}(0))-f^{\star}$, can be used to obtain similar results as to those in Corollary~\ref{prop:const_step-size:optimalAcc-given_itnum} by replacing $f(\vec{x}(0))-f^{\star}$ by $K$.
 \begin{corollary}
    Take $T\in \N$ and $K \in \R$ such that $K \geq f(\vec{x}(0))-f^{\star}$.
    If we choose the step-size as $\gamma  = 2K/(LT)$ then
     \begin{align} \label{eq:inThOptBou}
       ||\nabla f(\vec{x}^{\star}(T)) ||\leq \frac{\sqrt{2 L K}}{\cos(\theta) \sqrt{T}},
    \end{align}
   where $\vec{x}^{\star}(t)$ is chosen as in Equation~\eqref{eq:inThOpt}.
 \end{corollary}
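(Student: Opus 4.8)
The plan is to obtain this statement as an immediate consequence of Corollary~\ref{prop:const_step-size:optimalAcc-given_itnum}, simply by replacing the (generally unknown) quantity $f(\vec{x}(0))-f^{\star}$ by the available upper bound $K$. The only thing to check is that this substitution preserves every inequality used in the derivation, and it does: the quantity $f(\vec{x}(0))-f^{\star}$ enters the argument of Theorem~\ref{Prop:FSS-SC-Type-1}-b) only through the iteration-count bound $T\leq \lceil (f(\vec{x}(0))-f^{\star})/(\cos(\theta)\gamma\kappa)\rceil$, and there with a positive coefficient, so enlarging it to $K$ only makes the bound on $T$ larger.

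Concretely, I would first apply Theorem~\ref{Prop:FSS-SC-Type-1}-b) with $\kappa$ chosen so that $K/(\cos(\theta)\gamma\kappa)\leq T$, that is, $\kappa = K/(\cos(\theta)\gamma T)$; since $K\geq f(\vec{x}(0))-f^{\star}$ this forces $\lceil (f(\vec{x}(0))-f^{\star})/(\cos(\theta)\gamma\kappa)\rceil\leq T$, so some iterate $\vec{x}(T')$ with $T'\leq T$ lies in $\mathcal{X}(\epsilon(\kappa,\gamma))$. Because $\vec{x}^{\star}(T)$ in \eqref{eq:inThOpt} is, by definition, the iterate of smallest gradient norm among $\vec{x}(0),\ldots,\vec{x}(T)$, this yields $||\nabla f(\vec{x}^{\star}(T))||\leq \epsilon(\kappa,\gamma)= \kappa+\gamma L/(2\cos\theta)= K/(\cos(\theta)\gamma T)+\gamma L/(2\cos\theta)$.

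It remains to minimize the right-hand side over $\gamma>0$. This is exactly the one-dimensional convex problem already solved in Corollary~\ref{prop:const_step-size:optimalAcc-given_itnum}, now with $K$ in place of $f(\vec{x}(0))-f^{\star}$; its first-order condition gives $\gamma^{\star}=\sqrt{2K/(LT)}$, and substituting back gives $||\nabla f(\vec{x}^{\star}(T))||\leq \sqrt{2LK}/(\cos(\theta)\sqrt{T})$, which is the claimed bound (the step-size in the statement being this $\gamma^{\star}$). I do not expect any real obstacle, since the result is a corollary in the strict sense: its entire content is the observation that the bounds depend monotonically on $f(\vec{x}(0))-f^{\star}$, so any valid upper bound $K$ may be used in its place. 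The only mild care needed is the ceiling in the iteration bound, which is harmless here --- keeping it throughout (as in Theorem~\ref{Prop:FSS-SC-Type-1}-b)) changes $\epsilon(\kappa,\gamma)$ by at most an $O(1/T)$ term that does not affect the stated $O(1/\sqrt{T})$ rate.
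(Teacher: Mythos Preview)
Your approach is correct and matches the paper's: the corollary is stated without proof, preceded only by the remark that one may replace $f(\vec{x}(0))-f^{\star}$ by any upper bound $K$ in Corollary~\ref{prop:const_step-size:optimalAcc-given_itnum}, which is exactly the substitution you carry out. One point worth noting: your derived optimal step-size $\gamma^{\star}=\sqrt{2K/(LT)}$ is the correct one (it agrees with~\eqref{eq:inProof:Type-1-OP-sol2} under the substitution $f(\vec{x}(0))-f^{\star}\mapsto K$), whereas the $\gamma=2K/(LT)$ printed in the statement appears to be a typographical slip missing the square root---so your parenthetical ``the step-size in the statement being this $\gamma^{\star}$'' is right in spirit but does not literally match the printed formula.
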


 We next demonstrate how the convergence results translate to Type-2 stopping conditions [Eq.~\eqref{Type-2}].


 \subsubsection{Stopping Condition of \emph{Type-2}}

    We now show that the Type-2 approximate optimality [Eq.~\eqref{Type-2}] can be achieved for any accuracy $\epsilon{>}0$. 
   The result is based on the following lemma. 
  \begin{lemma} \label{prop:fixed-stepsize-type-2}
      Suppose $f\in \mathcal{F}_L(\R^N)$, $\mathcal{D}$ is a $\theta$-cover, and the iterates $\vec{x}(t)$ are generated by Equation~\eqref{gradient_dir_quant},
       then: 
      
      \begin{enumerate}[a)]
      
       \item
      for any $\epsilon>0$, $\gamma(t)\in (0,\bar{\gamma})$, where $\bar{\gamma}=2\cos(\theta)\epsilon / L$, and $T\in \N$ such that $\vec{x}(T)\in \mathcal{X}(\epsilon)$, the following holds 
       \begin{align} \label{inProp:SC-T2-a}    
           f(\vec{x}(t)) \leq F(\epsilon)  + \frac{L}{2}  \gamma^2, ~~\text{ for all }~t\geq T,
       \end{align}
       where  $F:\R_+\rightarrow \R\cup \{\infty\}$ is given by
        \begin{align}  \label{inProp:SC-T2-a3}  
           F(\kappa) = \sup\{ f(\vec{x}) | \vec{x}\in \mathcal{X}(\kappa)\}.
        \end{align}
        There exists $\kappa>0$ such that for all $\epsilon\in [0,\kappa]$ (i) $\mathcal{X}(\epsilon)$ is bounded and $F(\epsilon) <\infty$. Moreover,
              $ \lim_{\epsilon \rightarrow 0^+} F(\epsilon) = f^{\star}$.     
       \item 
       if $f$ is $\mu$-strongly convex then we have  
       \begin{align}
                F(\epsilon) \leq f^{\star}+ \epsilon^2/(2\mu). 
       \end{align}    
      \end{enumerate}
    \end{lemma}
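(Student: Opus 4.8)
\begin{IEEEproof}
The plan is to establish the per-iteration bound in a) first, then the qualitative properties of $F$, and finally b). For a), I would use two one-step facts. Since $\vec d(t)\in\mathcal S^{N-1}$, the update~\eqref{gradient_dir_quant} moves $\vec x(t)$ by exactly $\gamma$, so the descent lemma for $L$-Lipschitz gradients gives
\[
 f(\vec x(t{+}1))\le f(\vec x(t))-\gamma\langle\nabla f(\vec x(t)),\vec d(t)\rangle+\tfrac{L}{2}\gamma^2 .
\]
Because $\mathcal D$ is a $\theta$-cover, $\vec d(t)$ can be (and is) taken with $\ang(\nabla f(\vec x(t)),\vec d(t))\le\theta$, hence $\langle\nabla f(\vec x(t)),\vec d(t)\rangle\ge\cos(\theta)\|\nabla f(\vec x(t))\|\ge0$ and the objective can grow by at most $\tfrac{L}{2}\gamma^2$ in one step. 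The second fact is Lemma~\ref{lemma:binary_feedback_descent_lemma}: if $\|\nabla f(\vec x(t))\|>\epsilon$ and $\gamma<\bar\gamma=2\cos(\theta)\epsilon/L$, then $f$ strictly decreases. I would then prove~\eqref{inProp:SC-T2-a} by induction on $t\ge T$: the base case holds because $\vec x(T)\in\mathcal X(\epsilon)$ gives $f(\vec x(T))\le F(\epsilon)$; for the step, if $\vec x(t)\in\mathcal X(\epsilon)$ the first fact closes the estimate, and if $\|\nabla f(\vec x(t))\|>\epsilon$ the second fact gives $f(\vec x(t{+}1))<f(\vec x(t))\le F(\epsilon)+\tfrac{L}{2}\gamma^2$.

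The harder half of a) is the statement about $F$. Here the key claim is that $\mathcal X(\kappa)$ is bounded for \emph{some} $\kappa>0$; once this is known, $\mathcal X(\epsilon)\subseteq\mathcal X(\kappa)$ is compact for $\epsilon\in[0,\kappa]$ (closed because $\nabla f$ is continuous), so $F(\epsilon)<\infty$ and the supremum in~\eqref{inProp:SC-T2-a3} is attained. I would prove the claim by contradiction: if $\mathcal X(1/k)$ were unbounded for every $k$, pick $\vec x_k\in\mathcal X(1/k)$ with $\|\vec x_k\|\to\infty$, fix $\vec x^\star\in\mathcal X^\star$, write $\vec x_k=\vec x^\star+t_k\vec u_k$ with $\|\vec u_k\|=1$, $t_k\to\infty$, and pass to a subsequence with $\vec u_k\to\vec u$. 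For any fixed $s>0$, convexity makes $r\mapsto\langle\nabla f(\vec x^\star+r\vec u_k),\vec u_k\rangle$ nondecreasing, equal to $0$ at $r=0$ and at most $1/k$ at $r=t_k>s$, hence in $[0,1/k]$ at $r=s$; letting $k\to\infty$ and using continuity of $\nabla f$ forces $\langle\nabla f(\vec x^\star+s\vec u),\vec u\rangle=0$ for all $s\ge0$, i.e.\ $f\equiv f^\star$ on the whole ray $\vec x^\star+\R_+\vec u$, contradicting boundedness of $\mathcal X^\star$. For the limit, note $F$ is nondecreasing with $F(0)=\sup_{\mathcal X^\star}f=f^\star$, so $\lim_{\epsilon\to0^+}F(\epsilon)$ exists and is $\ge f^\star$; if it exceeded $f^\star$ by $\eta>0$, take $\vec x_k\in\mathcal X(\epsilon_k)$ with $\epsilon_k\to0$ and $f(\vec x_k)>f^\star+\eta/2$, extract a subsequence converging inside the compact $\mathcal X(\kappa)$ to some $\bar{\vec x}$; then $\nabla f(\bar{\vec x})=0$, so $\bar{\vec x}\in\mathcal X^\star$ and $f(\bar{\vec x})=f^\star$, contradicting $f(\bar{\vec x})=\lim_k f(\vec x_k)\ge f^\star+\eta/2$.

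For b), I would use strong convexity directly: minimizing over $\vec y$ the lower bound $f(\vec y)\ge f(\vec x)+\langle\nabla f(\vec x),\vec y-\vec x\rangle+\tfrac{\mu}{2}\|\vec y-\vec x\|^2$ (the minimizer being $\vec y=\vec x-\tfrac{1}{\mu}\nabla f(\vec x)$) yields the Polyak--{\L}ojasiewicz inequality $f(\vec x)-f^\star\le\tfrac{1}{2\mu}\|\nabla f(\vec x)\|^2$; restricting to $\vec x\in\mathcal X(\epsilon)$ and taking the supremum gives $F(\epsilon)\le f^\star+\epsilon^2/(2\mu)$.

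I expect the routine parts to be the one-step bookkeeping and b); the delicate step will be boundedness of $\mathcal X(\epsilon)$ for small $\epsilon$ and the limit $F(\epsilon)\to f^\star$. This is precisely where the hypothesis in Definition~\ref{assump:problem_feasible} that $\mathcal X^\star$ is bounded is indispensable, and the recession-ray argument above is the mechanism that converts that hypothesis into the needed compactness.
\end{IEEEproof}
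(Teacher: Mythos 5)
Your proof is correct, and its core — the induction on $t\geq T$ using the descent lemma when $\vec{x}(t)\in\mathcal{X}(\epsilon)$ (inner product nonnegative because $\mathcal{D}$ is a $\theta$-cover, so the objective can rise by at most $\tfrac{L}{2}\gamma^2$) and Lemma~\ref{lemma:binary_feedback_descent_lemma} when $\|\nabla f(\vec{x}(t))\|>\epsilon$ — is exactly the paper's route, which simply transplants Steps 1--3 of the proof of Lemma~\ref{prop:fixed-stepsize-type-2-Proj} with $\|\nabla f(\cdot)\|$, $\mathcal{X}(\epsilon)$, $F(\epsilon)$ in place of $L_\alpha(\cdot)$, $\bar{\mathcal{X}}_\alpha(\epsilon)$, $\bar{F}_\alpha(\epsilon)$. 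Where you genuinely diverge is the boundedness of $\mathcal{X}(\epsilon)$ for small $\epsilon$: the paper delegates this to Lemma~\ref{lemma:bounded_Xeps}(i), which is quantitative — it picks $R$ with $\mathcal{X}^\star\subseteq\mathcal{B}^N(\vec{x}^\star,R)$, sets $\kappa_1=\tfrac{1}{L}\min_{\mathcal{S}^{N-1}(\vec{x}^\star,R)}\|\nabla f\|^2$ via the co-coercivity bound $\langle\nabla f(\vec{x}),\vec{x}-\vec{x}^\star\rangle\geq\tfrac{1}{L}\|\nabla f(\vec{x})\|^2$, and uses gradient monotonicity along rays to conclude $\|\nabla f(\vec{x})\|\geq\kappa_1/R$ outside the ball, so an explicit threshold $\kappa=\kappa_1/R$ is produced (and the same machinery is recycled for the constrained case (ii)). Your recession-ray contradiction (an unbounded sequence in $\mathcal{X}(1/k)$ yields a direction $\vec{u}$ along which $\langle\nabla f(\vec{x}^\star+s\vec{u}),\vec{u}\rangle\equiv 0$, making the whole ray optimal and contradicting boundedness of $\mathcal{X}^\star$) is shorter and more elementary, but purely qualitative — it gives existence of $\kappa$ without an estimate. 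Your limit argument $F(\epsilon)\to f^\star$ (monotonicity of $F$ plus compactness and $\nabla f(\bar{\vec{x}})=0$ at limit points) is the same compactness mechanism as the paper's Step 3, and for part b) you rederive the inequality $f(\vec{x})-f^\star\leq\tfrac{1}{2\mu}\|\nabla f(\vec{x})\|^2$ from strong convexity where the paper simply cites Nesterov's (2.1.19); both are fine.
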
      
    \begin{IEEEproof}
     The proof is provided in Appendix~\ref{App:prop:fixed-stepsize-type-2}.
    \end{IEEEproof}
    Lemma~\ref{prop:fixed-stepsize-type-2} is useful in obtaining Type-2 approximate optimality     as it connects the quantity $f(\vec{x}(t))-f^{\star}$  to Theorem~\ref{Prop:FSS-SC-Type-1} via the function  $F(\epsilon)$ in Equation~\eqref{inProp:SC-T2-a3}.
 In particular,  Part a) of Lemma~\ref{prop:fixed-stepsize-type-2} bounds $f(\vec{x}(t))$ by a constant that depends on $F(\cdot)$, where $F(\epsilon)$ converges to $f^{\star}$ as $\epsilon$ converges to $0$. 
  Therefore, by using the dependence of  $F(\epsilon)$ on $\mathcal{X}(\epsilon)$, defined in Equation~\eqref{eq:inProP-FSS-SC-Type-1-MCXep}, we can connect $f(\vec{x}(t))-f^{\star}$ to the convergence result in  Theorem~\ref{Prop:FSS-SC-Type-1} to ensure that the Type-2 stopping condition can be achieved for any $\epsilon>0$.  
 Part b) of Lemma~\ref{prop:fixed-stepsize-type-2} then illustrates how the upper bound on $f(\vec{x}(t))$ depending on $F(\epsilon)$ can be further improved when $f$ is $\mu$-strongly convex. 
  These ideas are formally illustrated in the following theorem.   
    \begin{theorem} \label{Theo:UnCon-FS-main}
             Suppose $f\in \mathcal{F}_L(\R^N)$, $\mathcal{D}$ is a $\theta$-cover, and $\vec{x}(t)$ are generated by Equation~\eqref{gradient_dir_quant}.
       Then  for any $\epsilon>0$:
      \begin{enumerate}[a)]
      
       \item there exists a step-size $\gamma>0$ and $T\in \N$ such that $f(\vec{x}(t))-f^{\star}<\epsilon$ for all $t\geq T$,
       
       \item moreover, if $f$ is $\mu$-strongly convex and
       $\gamma\in (0,\bar{\gamma})$ where
        $$\bar{\gamma}=\min \left\{ \frac{2\cos(\theta) \sqrt{\mu \epsilon}}{L},\sqrt{\frac{\epsilon}{L}} \right\}$$
       then $f(\vec{x}(t))-f^{\star} \leq \epsilon$ for all $t\geq T$ where
         \begin{align}  \label{eq:inProp:-conStep-a-Drec-obj}
                  T\leq \left\lceil \frac{2(f(\vec{x}(0))-f^{\star})}{\gamma(2 \cos(\theta) \sqrt{\mu \epsilon}-L\gamma)} \right\rceil.
         \end{align}  
       \end{enumerate}       
    \end{theorem}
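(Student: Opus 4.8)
The plan is to obtain both parts directly from Lemma~\ref{prop:fixed-stepsize-type-2} and Theorem~\ref{Prop:FSS-SC-Type-1}, in exactly the way Theorem~\ref{Theo:UnCon-FS-main-Cons} was obtained from its projected counterparts. The two ingredients say, together: (i) for any gradient accuracy $\bar\epsilon>0$ and any $\gamma\in(0,2\cos(\theta)\bar\epsilon/L)$, Theorem~\ref{Prop:FSS-SC-Type-1}-a) guarantees a finite $T$ with $\vec{x}(T)\in\mathcal{X}(\bar\epsilon)$, where $\mathcal{X}(\bar\epsilon)$ is as in~\eqref{eq:inProP-FSS-SC-Type-1-MCXep}; and (ii) once $\vec{x}(T)\in\mathcal{X}(\bar\epsilon)$, Lemma~\ref{prop:fixed-stepsize-type-2}-a) gives $f(\vec{x}(t))\le F(\bar\epsilon)+\tfrac{L}{2}\gamma^2$ for all $t\ge T$, with $F(\bar\epsilon)\to f^\star$ as $\bar\epsilon\to0^+$. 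So the whole argument reduces to choosing $\bar\epsilon$ and $\gamma$ small enough that the two error terms $F(\bar\epsilon)-f^\star$ and $\tfrac{L}{2}\gamma^2$ are each below $\epsilon/2$ while respecting $\gamma<2\cos(\theta)\bar\epsilon/L$.

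For part a), first I would invoke the final statement of Lemma~\ref{prop:fixed-stepsize-type-2}-a): since $F(\bar\epsilon)\to f^\star$, pick $\bar\epsilon>0$ small enough (in particular $\bar\epsilon\le\kappa$, so that $F(\bar\epsilon)<\infty$) that $F(\bar\epsilon)<f^\star+\epsilon/2$. Then choose $\gamma\in\bigl(0,\min\{2\cos(\theta)\bar\epsilon/L,\sqrt{\epsilon/L}\}\bigr)$, which simultaneously forces $\tfrac{L}{2}\gamma^2<\epsilon/2$ and satisfies the step-size restriction of Theorem~\ref{Prop:FSS-SC-Type-1}-a). That theorem then produces a finite $T$ with $\vec{x}(T)\in\mathcal{X}(\bar\epsilon)$, and Lemma~\ref{prop:fixed-stepsize-type-2}-a) gives $f(\vec{x}(t))\le F(\bar\epsilon)+\tfrac{L}{2}\gamma^2<f^\star+\epsilon$ for all $t\ge T$.

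For part b), I would make the explicit choice $\bar\epsilon=\sqrt{\mu\epsilon}$. By Lemma~\ref{prop:fixed-stepsize-type-2}-b) (which, thanks to strong convexity, holds with no threshold restriction) this yields $F(\bar\epsilon)\le f^\star+\bar\epsilon^2/(2\mu)=f^\star+\epsilon/2$. The step-size restriction $\gamma<2\cos(\theta)\bar\epsilon/L=2\cos(\theta)\sqrt{\mu\epsilon}/L$ from Theorem~\ref{Prop:FSS-SC-Type-1}-a), combined with $\gamma<\sqrt{\epsilon/L}$ (needed so that $\tfrac{L}{2}\gamma^2\le\epsilon/2$), is precisely $\gamma\in(0,\bar\gamma)$ as stated in the theorem. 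Applying Theorem~\ref{Prop:FSS-SC-Type-1}-a) with gradient accuracy $\bar\epsilon=\sqrt{\mu\epsilon}$ in place of $\epsilon$ turns the bound~\eqref{eq:inProp:-conStep-a-Drec} into $T\le\bigl\lceil 2(f(\vec{x}(0))-f^\star)/(\gamma(2\cos(\theta)\sqrt{\mu\epsilon}-L\gamma))\bigr\rceil$, which is exactly~\eqref{eq:inProp:-conStep-a-Drec-obj}, and Lemma~\ref{prop:fixed-stepsize-type-2}-a) then gives $f(\vec{x}(t))\le f^\star+\epsilon/2+\epsilon/2=f^\star+\epsilon$ for all $t\ge T$.

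There is no serious obstacle here; the only point requiring a little care — really just bookkeeping — is verifying that the competing constraints on $\gamma$ (the descent bound $2\cos(\theta)\bar\epsilon/L$, the tail bound $\sqrt{\epsilon/L}$, and positivity) define a nonempty interval, and that with $\bar\epsilon=\sqrt{\mu\epsilon}$ this interval coincides with the advertised $\bar\gamma$; both are immediate by inspection. One must also remember, for part a) only, to keep $\bar\epsilon\le\kappa$ so that $F(\bar\epsilon)$ is finite, which is automatic when $\epsilon$ is small and otherwise handled by shrinking $\bar\epsilon$ further without affecting the conclusion.
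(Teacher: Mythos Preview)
Your proposal is correct and follows essentially the same approach as the paper's proof: part a) is obtained directly from Lemma~\ref{prop:fixed-stepsize-type-2}-a) together with Theorem~\ref{Prop:FSS-SC-Type-1}-a), and part b) by taking the gradient accuracy $\bar\epsilon=\sqrt{\mu\epsilon}$, using Lemma~\ref{prop:fixed-stepsize-type-2}-b) to bound $F(\sqrt{\mu\epsilon})-f^\star\le\epsilon/2$, and invoking Theorem~\ref{Prop:FSS-SC-Type-1}-a) for the iteration bound. Your write-up is in fact more detailed than the paper's, which simply cites the two results for part a) and carries out the same $\epsilon/2+\epsilon/2$ split for part b).
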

    \begin{IEEEproof}
      a) The result follows directly from Lemma~\ref{prop:fixed-stepsize-type-2}-a) and Theorem~\ref{Prop:FSS-SC-Type-1}-a).
      
      b) Since $\gamma\in (0,2\cos(\theta) \sqrt{\mu \epsilon/L})$ it follows from Theorem~\ref{Prop:FSS-SC-Type-1}-a) there exists $T$ bounded as in Equation~\eqref{eq:inProp:-conStep-a-Drec} such that $\vec{x}(T)\in \mathcal{X}(\sqrt{\mu \epsilon})$. From Lemma~\ref{prop:fixed-stepsize-type-2}-a), for all $t\geq T$ that 
      \begin{align*}
         f(\vec{x}(t))- f^{\star} \leq&  F(\sqrt{\mu \epsilon}) - f^{\star} + \frac{L}{2}\gamma^2 
             \leq  \frac{\epsilon}{2}+\frac{\epsilon}{2}=\epsilon,
      \end{align*}
      where the second inequality follows from Lemma~\ref{prop:fixed-stepsize-type-2}-b) and that $\gamma\leq \sqrt{\epsilon/L}$.  
    \end{IEEEproof}

    Theorem~\ref{Theo:UnCon-FS-main} shows that when $\mathcal{D}$ is a $\theta$-cover the Type-2 optimality condition [Eq.~\eqref{Type-2}] can be achieved in finite number of iterations.
     For general functions, the theorem does not provide a step-size $\gamma_{\epsilon}$ that can achieve any particular $\epsilon$-accuracy, even though such $\gamma_{\epsilon}$ always exist.
 This is challenging in general, as it can be difficult to bound the function $F(\cdot)$ for general convex functions $f(\cdot)$.  
 Nevertheless, part b) of the proof shows that when $f$ is $\mu$-strongly convex, then a range of step-sizes that ensure a given $\epsilon>0$ accuracy is provided. 
   Moreover, when $f$ is $\mu$-stongly convex then we can obtain similar bound on number of iterations needed to achieve that $\epsilon$-accuracy as in Equation~\eqref{eq:inProp:-conStep-a-Drec} in Theorem~\ref{Prop:FSS-SC-Type-1}-a).


  \subsection{Diminishing Step-Size} \label{sec:Convergence:DimStep}

    We now consider the diminishing step-size case.
    The following result shows that the step-sizes can actually be chosen so Iterates~\eqref{gradient_dir_quant}  converges to the optimal solution to Problem~\eqref{main_problem}.

  \begin{theorem} \label{prop:dim_step_size}
   Suppose that $f\in \mathcal{F}_L(\R^N)$, $\mathcal{D}$ is a $\theta$-cover, and that the iterates $\vec{x}(t)$ are generated by Equation~\eqref{gradient_dir_quant}.
    If the step-sizes $\gamma(t)\geq 0$ are chosen so that  $\lim_{t\rightarrow 0} \gamma(t)=0$  and $\sum_{t=0}^N \gamma(t) =\infty$ 
    then  $\lim_{t\rightarrow \infty } \dist (\vec{x}(t),\mathcal{X}^{\star}) =  0$.
  \end{theorem}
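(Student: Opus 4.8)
The plan is to establish convergence via a standard diminishing-step-size argument for (sub)gradient-type methods, adapted to the quantized setting. The key facts I would exploit are: (i) Lemma~\ref{lemma:binary_feedback_descent_lemma}, which says that whenever $\|\nabla f(\vec{x}(t))\| > \epsilon$ and the step-size is small enough (specifically $\gamma(t) < 2\cos(\theta)\epsilon/L$), a single iteration of~\eqref{gradient_dir_quant} strictly decreases $f$ by $\delta(\epsilon,\gamma(t),\theta) = (2\cos(\theta)\epsilon/L - \gamma(t))\frac{L}{2}\gamma(t)$; and (ii) the hypothesis $\gamma(t)\to 0$ guarantees that for any fixed $\epsilon>0$, eventually $\gamma(t) < 2\cos(\theta)\epsilon/L$, so the descent inequality applies at all large $t$ where the iterate is still $\epsilon$-far from stationarity. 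First I would fix an arbitrary $\epsilon>0$ and argue that $\liminf_{t\to\infty}\|\nabla f(\vec{x}(t))\| \le \epsilon$: if not, then $\|\nabla f(\vec{x}(t))\| > \epsilon$ for all $t \ge t_0$, and for $t$ large enough that $\gamma(t) < \cos(\theta)\epsilon/L$ we get $f(\vec{x}(t+1)) \le f(\vec{x}(t)) - \frac{L}{2}\gamma(t)\cdot\frac{\cos(\theta)\epsilon}{L} = f(\vec{x}(t)) - \frac{\cos(\theta)\epsilon}{2}\gamma(t)$. Summing and using $\sum_t \gamma(t) = \infty$ forces $f(\vec{x}(t)) \to -\infty$, contradicting $f \ge f^\star$ (which holds since $\mathcal{X}^\star$ is nonempty). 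Hence $\liminf_{t\to\infty}\|\nabla f(\vec{x}(t))\| = 0$.

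Next I would upgrade the $\liminf$ statement to genuine convergence of the whole sequence to $\mathcal{X}^\star$, which is where most of the work lies. The step-change $\|\vec{x}(t+1)-\vec{x}(t)\| = \gamma(t) \to 0$ controls oscillation, and $L$-Lipschitz continuity of $\nabla f$ gives $\|\nabla f(\vec{x}(t+1))\| \ge \|\nabla f(\vec{x}(t))\| - L\gamma(t)$, so the gradient norm cannot jump up by much in one step. Combining these with the $\liminf=0$ result by the usual "two-sequence" lemma (between consecutive times the gradient norm crosses below $\epsilon$ and rises above $2\epsilon$, the accumulated step-lengths are bounded below, but also the accumulated $f$-decrease is bounded below by a fixed positive multiple of those step-lengths, while total $f$-decrease is finite) shows that in fact $\limsup_{t\to\infty}\|\nabla f(\vec{x}(t))\| = 0$, i.e. $\|\nabla f(\vec{x}(t))\| \to 0$. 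At this point I would also note that $\{f(\vec{x}(t))\}$ converges (it is eventually nonincreasing along the relevant subsequences and bounded below; a careful version of the oscillation bound shows the full sequence converges).

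Finally I would convert $\|\nabla f(\vec{x}(t))\| \to 0$ into $\dist(\vec{x}(t),\mathcal{X}^\star) \to 0$. For this I need that the iterates stay in a bounded region: since $\mathcal{X}^\star$ is nonempty and bounded and $f$ is convex with $\nabla f \to 0$ along the iterates, the sublevel set argument (together with $f(\vec{x}(t))$ converging, necessarily to $f^\star$) pins the iterates into a bounded set, so they have limit points; every limit point $\bar{\vec{x}}$ satisfies $\nabla f(\bar{\vec{x}}) = 0$ by continuity, hence $\bar{\vec{x}} \in \mathcal{X}^\star$ by convexity. A compactness argument then yields $\dist(\vec{x}(t),\mathcal{X}^\star)\to 0$. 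I expect the main obstacle to be the promotion from $\liminf$ to $\limsup$ (the oscillation-control step): one must carefully track how much $f$ decreases between the "down-crossing" and "up-crossing" times of the gradient norm and play that against the non-summability of $\gamma(t)$ and the summability-free decrease budget $f(\vec{x}(0)) - f^\star < \infty$. The boundedness of the iterates, needed to extract limit points, is the second delicate point and is exactly where the "$\mathcal{X}^\star$ nonempty and bounded" hypothesis from Definition~\ref{assump:problem_feasible} gets used, presumably via the coercivity-type consequence that bounded sublevel sets of $f$ follow from boundedness of $\mathcal{X}^\star$ for convex $f$.
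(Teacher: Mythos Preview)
Your Step~1 is correct and matches the paper exactly: assume $\liminf_t\|\nabla f(\vec{x}(t))\|>0$, apply Lemma~\ref{lemma:binary_feedback_descent_lemma} once $\gamma(t)$ is small, sum, and use non-summability of $\gamma(t)$ to drive $f$ to $-\infty$.

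The gap is in your Step~2, the promotion from $\liminf$ to $\limsup$. Your ``two-sequence'' argument hinges on the claim that ``total $f$-decrease is finite.'' But $f$ can \emph{increase} whenever $\|\nabla f(\vec{x}(t))\|\le\epsilon$: the descent lemma only gives $f(\vec{x}(t{+}1))\le f(\vec{x}(t))+(L/2)\gamma(t)^2$ there. Hence the total decrease over ``high'' intervals is bounded above not by $f(\vec{x}(0))-f^\star$ alone, but by $f(\vec{x}(0))-f^\star$ \emph{plus} the total increase over ``low'' intervals, and the latter is controlled only by $(L/2)\sum_t\gamma(t)^2$. The theorem does \emph{not} assume square-summability, so this sum may diverge, and your finiteness claim fails. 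Concretely, there is nothing preventing infinitely many excursions above $2\epsilon$, each costing a fixed $f$-drop, provided the intervening low periods rebuild $f$ by a comparable amount.

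The paper sidesteps this entirely by \emph{not} trying to prove $\|\nabla f(\vec{x}(t))\|\to 0$ directly. Instead it uses the trapping property of Lemma~\ref{prop:fixed-stepsize-type-2}~a): once some $\vec{x}(T)\in\mathcal{X}(\epsilon)$ and $\gamma(t)\le\bar\gamma$ for all $t\ge T$, induction gives $f(\vec{x}(t))\le F(\epsilon)+(L/2)\bar\gamma^2$ for \emph{all} $t\ge T$, because every time the iterate leaves $\mathcal{X}(\epsilon)$ the function decreases, and every time it is inside, $f\le F(\epsilon)$ with a one-step overshoot of at most $(L/2)\bar\gamma^2$. Since $F(\epsilon)\to f^\star$ and $\bar\gamma$ can be taken arbitrarily small, this yields $f(\vec{x}(t))\to f^\star$. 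Boundedness of the iterates is then obtained (Step~3 in the proof of Theorem~\ref{prop:dim_step_size-Project}) by bounding the accumulated step-size on each excursion out of the bounded set $\mathcal{X}(\kappa)$, using that $f$ decreases throughout such an excursion and starts from at most $F(\kappa)<\infty$; this relies on Lemma~\ref{lemma:bounded_Xeps} (boundedness of $\mathcal{X}(\kappa)$ for small $\kappa$), which is exactly where the hypothesis that $\mathcal{X}^\star$ is bounded enters. Finally $\dist(\vec{x}(t),\mathcal{X}^\star)\to 0$ follows from $f\to f^\star$, boundedness, and continuity, as in your Step~3.

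In short: replace your oscillation-control step by the trapping argument $f(\vec{x}(t))\le F(\epsilon)+(L/2)\bar\gamma^2$; then the rest of your outline goes through. Alternatively, if you add the hypothesis $\sum_t\gamma(t)^2<\infty$ your up/down-crossing argument can be completed, but that is a strictly stronger assumption than what the theorem states.
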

  \begin{IEEEproof} 
  The proof is provided in Appendix~\ref{App:prop:dim_step_size}. 
  \end{IEEEproof}
  
    The step-size choice in the theorem  is necessary to ensure that  $\lim_{t\rightarrow \infty } \dist (\vec{x}(t),\mathcal{X}^{\star}) =  0$ for all $f\in \mathcal{F}_L(\R^N)$, consider the scalar function $f$ defined in Equation~\eqref{eq:flowerbound}.

 Theorem~\ref{prop:dim_step_size} shows that when $\mathcal{D}$ is a $\theta$-cover then there exists a step-size rule such that every limit point of the quantized gradient methods is an optimal solution to Problem~\eqref{main_problem}.
 A particular implication of this result is that every $\theta$-cover is a proper quantization, see Definition~\ref{def:proper quantization}.
 Therefore, Theorem~\ref{prop:dim_step_size}  actually proves one direction of the equivalence  established in 
Theorem~\ref{prop:equivalenceBetweenPQandTC}.

\section{Numerical Illustrations} \label{sec:NumericalResults}
 We now illustrate how the studied algorithms perform on two of the application examples discussed in Section~\ref{sec:Related Background}. We compare the numerical performance with some of the theoretical results in the paper and with algorithms that use perfect communication with no quanitzation.

   \subsection{TCP Flow Control with Binary Feedback}

\begin{figure}
    \centering
    \begin{subfigure}[b]{0.5\columnwidth}
        \includegraphics[width=\textwidth]{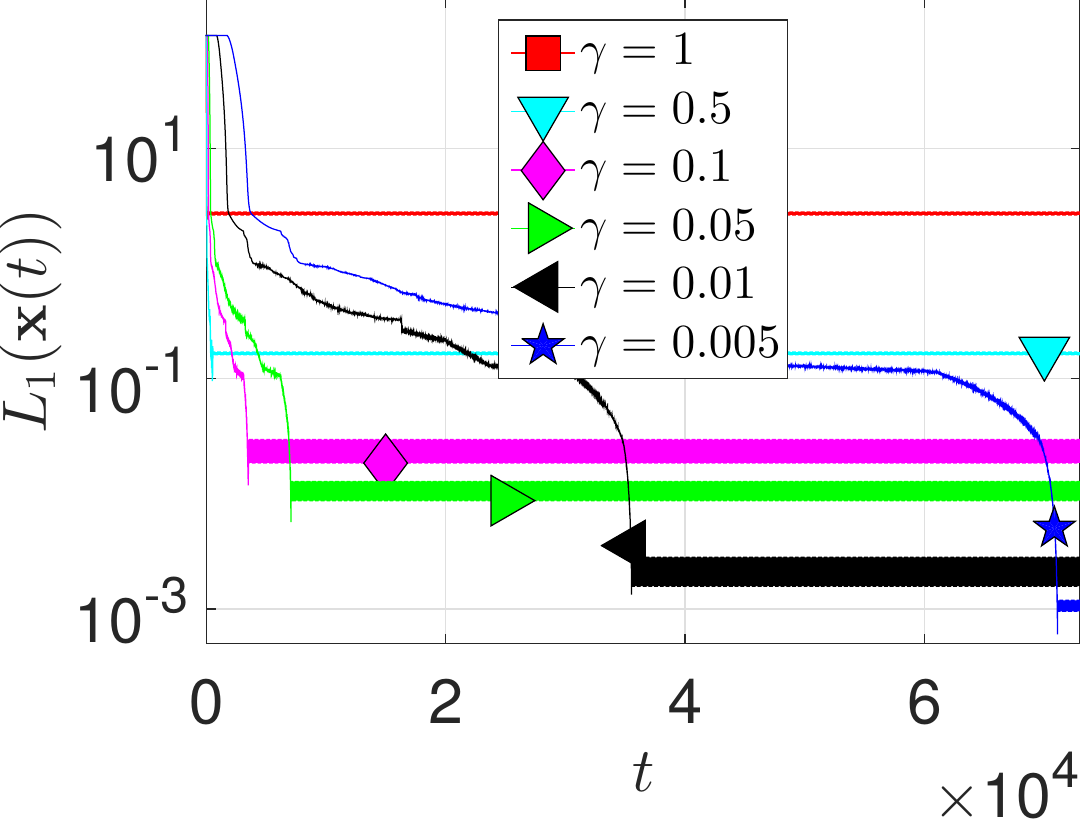}
        \caption{Optimality~condition~[Eq.~\eqref{eq:closedFun}]}
        \label{fig:TCP-A} 
    \end{subfigure}~
    \begin{subfigure}[b]{0.5\columnwidth}
        \includegraphics[width=\textwidth]{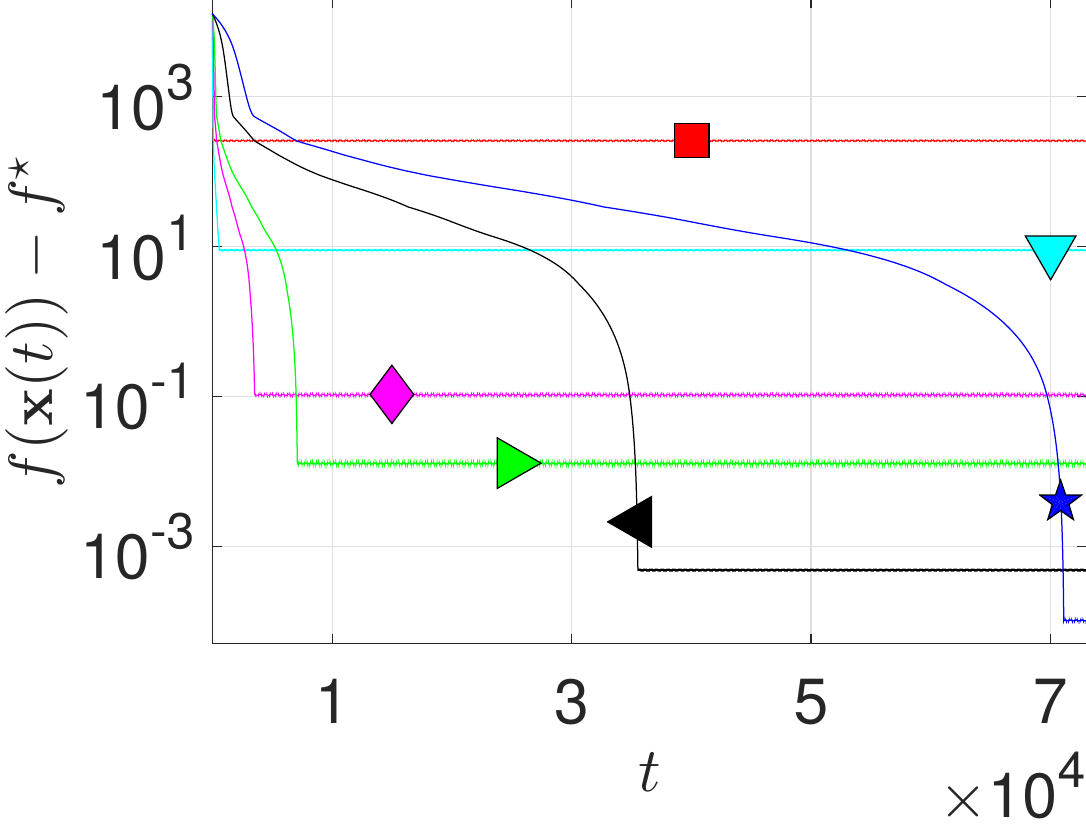}
        \caption{Dual objective function}
        \label{fig:TCP-B}
    \end{subfigure} 
    \begin{subfigure}[b]{0.5\columnwidth}
        \includegraphics[width=\textwidth]{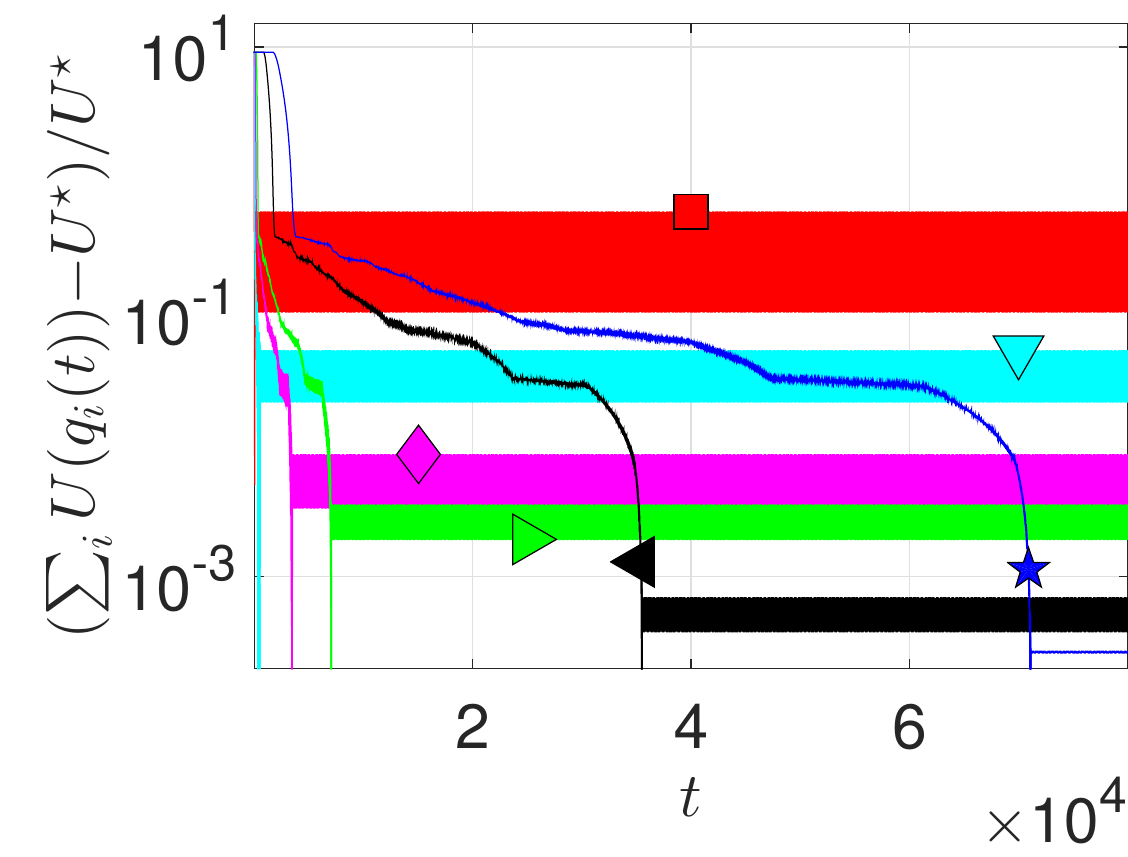}
        \caption{Relative primal objective value}
        \label{fig:TCP-C}
    \end{subfigure}~
    \begin{subfigure}[b]{0.5\columnwidth}
        \includegraphics[width=\textwidth]{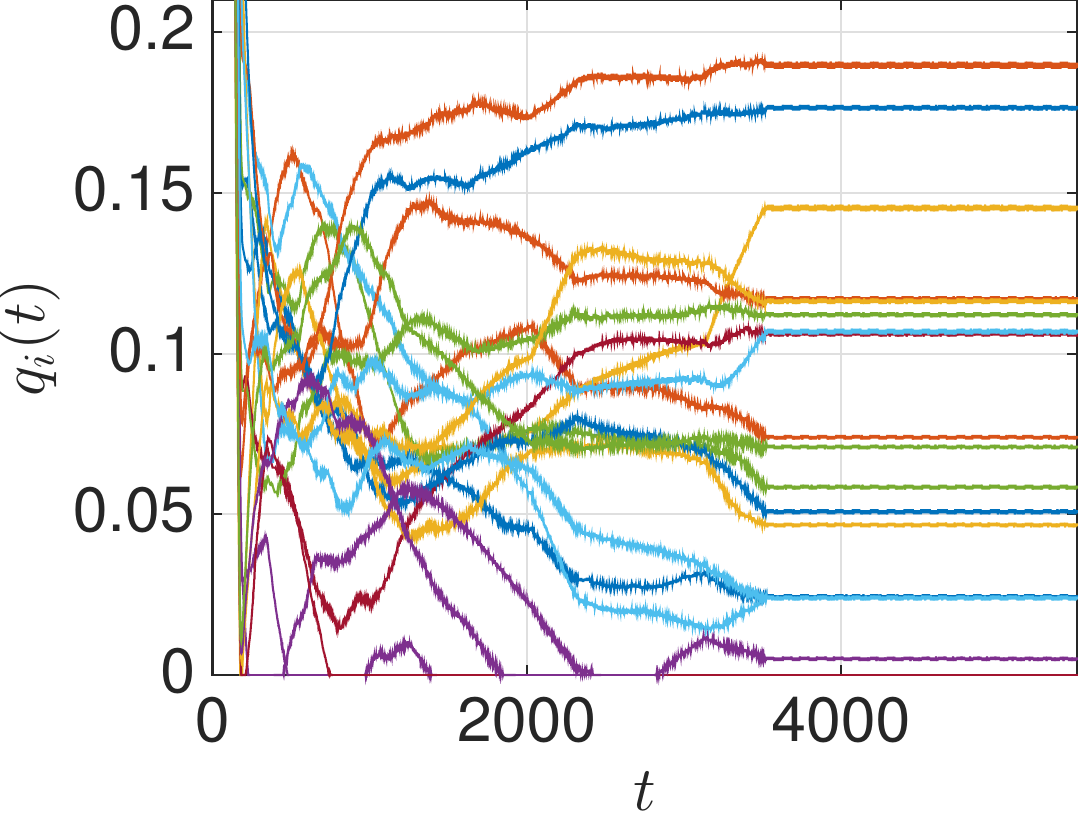}
        \caption{Allocation $\vec{q}(t)$ when $\gamma{=}0.1$}
        \label{fig:TCP-D}
    \end{subfigure} 
    \caption{Optimality measure $L_1 (\cdot)$ from Equation~\eqref{eq:closedFun} (Fig.~\ref{fig:TCP-A}), dual (Fig.~\ref{fig:TCP-B}), and primal (Fig.~\ref{fig:TCP-C}) objective function value at every iteration for different step-sizes. The resource allocations $q_i(t)$ (Fig.~\ref{fig:TCP-D}) at every iteration when $\gamma=0.1$.}
    \label{fig:TCP}
\end{figure}

  We illustrate the convergence of Iterates~\eqref{eq:prop:dim_step_size-Project-rec-1} on the TCP Flow Control in Section~\ref{subsec:Application_RA}. 
   We consider a network with $S=20$ sources and $N=100$ links. 
   We use the same utility functions as in Experiment~1 of~\cite[Section~VI-B]{low_1999}, i.e.,
    $U_s(q_s) = 1000 \log( 1+q_s), \text{ for } s\in \mathcal{S}$.
   Similarly as in~\cite{Wei_2013}, we generate the network matrix $\vec{A}$ [Eq.~\eqref{eq:TCPcontrol_A_matrix}] randomly so each entry of $\vec{A}$ is $1$ with the probability $1/2$ and $0$ otherwise.
    We set $c_l=1$ for all $l\in \mathcal{L}$. The local constraint of each source  
     is $[0,1]$. We consider the step-sizes $\gamma=0.005$ , $0.01$, $0.05$, $0.1$, $0.5$, and $1$. 
   Note that in the figures described below,  some of lines that appear to be thick lines actually show some fluctuations. 
    
    Fig.~\ref{fig:TCP-A} depicts the optimality measure $L_1(\vec{x}(t))$, see Equation~\eqref{eq:closedFun}  in Lemma~\ref{Lemma:OptCon}. 
   From Lemma~\ref{Lemma:OptCon},  $\vec{x}\in \R_+^N$ is an optimal solution to Problem~\eqref{main_problem} if and only if $L_1(\vec{x})=0$.  
  For all step-sizes $\gamma$, the measure $L_1(\vec{x}(t))$ converges to some small error floor and then  fluctuates slightly there.  
  For smaller step-sizes $\gamma$ the optimality measure $L_1(\vec{x}(t))$ converges to smaller values, roughly to $2.6$, $0.16$, $0.019$, $0.009$, $0.002$, $0.001$ for $\gamma=1$, $0.5$, $0.01$, $0.005$, $0.001$, and $0.0005$. 
  These results show that the step-size choices $\gamma$ in Theorem~\ref{Prop:FSS-SC-Type-1-Cons}  are conservative.\footnote{The parameters in Theorem~\ref{Prop:FSS-SC-Type-1-Cons}  for this problem are the parameters used are $\alpha=1$, $B=\sqrt{N}$, $\mu=250$, $\bar{S}=58$, $\bar{L}=17$ (see Lemma~\ref{lemma:multiple_resources_CDLC-TCP}).} 
  For example, to that ensure $\epsilon=0.1$ accuracy in Theorem~\ref{Prop:FSS-SC-Type-1-Cons} the step-sizes should be $\gamma\in (0,0.00002)$ but in for this example the step-size chooses $\gamma\leq0.1$ achieve the $\epsilon=0.1$ accuracy. 
   
   Figs.~\ref{fig:TCP-B} and~\ref{fig:TCP-C} depict the dual and primal objective function values at every iteration.
  The figures demonstrate a similar convergence behaviour of the primal/dual objective function values as in the optimality measure $L_{1}(\vec{x}(t))$ in Fig.~\ref{fig:TCP-A}. 
  For the dual objective value these results agree with the results in~ Theorem~\ref{Theo:UnCon-FS-main-Cons}.
Finally, Fig.~\ref{fig:TCP-D} illustrates the convergence of the data rate allocation to each source when $\gamma$.
 The results show that the all the data rate allocations converges after roughly 3500 iterations and then fluctuate slightly there.

 \subsection{Task Allocation}

 We now illustrate the performance of the quantized gradient methods on the Task Allocation Problem~\eqref{RA} from Section~\ref{subsec:Application_RA} with $K{=}4$ machines and $N{=}2$ tasks. 
   For each machine $i{=}1,2,3,4$ we have the cost function $C_i(\vec{q}_i)= a_i \vec{q}_{i,1}^2 + b_i\vec{q}_{i,2}^2$ where $a_i$ and $b_i$ are uniform random random variables on the interval $[1,5]$.  
    The private constraint of machine $i=1,2,3,4$ is $\mathcal{Q}_i=\{ (x,y) \in \R^2 | x,y\geq 0, x+y\leq 3\}$.
    Clearly,  $-C_i$ are strongly concave with concavity parameters  $\mu_i=\min\{ a_{i}, b_{i} \}\geq 1$. 
 It can be verified that the dual gradient is $L$-Lipschitz continuous with $L= 4/\mu$, where $\mu=\min\{\mu_1,\ldots, \mu_4 \}$. 
  The step-size is $\gamma=0.1$ and the initialization is $\vec{x}(0)=(0,0)$ (recall that $\vec{x}$ is the dual variable).
  We use the quantization set $\mathcal{D}$ from Example~\ref{Examples:theta-cover-2} when $2$, $3$, and $4$ bits are communicated per iteration, i.e., when $|\mathcal{D}|=4,8,16$, see Remark~\ref{remark:nr_of_bits}.
 
   Fig. \ref{fig:animals} depicts the norm of the gradient and the primal objective function at every iteration of the algorithm. 
      The norm of the gradient $||\nabla f||$ reaches the accuracy $\epsilon=0.1$ in roughly $51$, $56$, and $65$ iterations using $204$, $168$, $130$ bits when $2$, $3$, and $4$ bits are communicated per iteration, respectively.
      We compare the results to Iterations~\eqref{gradient_alg} and~\eqref{gradient_dir_alg} where no quantization is done, i.e., infinite bandwidth is used.
      Fig.~\ref{fig:TA-A}  shows that by using $4$ bits per iteration, the results achieved by QGM are almost as good as when the full gradient direction is communicated using Iterations~\eqref{gradient_dir_alg}.
      However, the QGMs do not perform as well as  Iterations~\eqref{gradient_alg}; this is to be expected, since in Iterations~\eqref{gradient_alg} the full direction and magnitude of the gradient is known. 
      These results illustrate that we can dramatically reduce the number of bits communicated without sacrificing much in performance.

\begin{figure}
    \centering
    \begin{subfigure}[h]{0.5\columnwidth}
        \includegraphics[width=\textwidth]{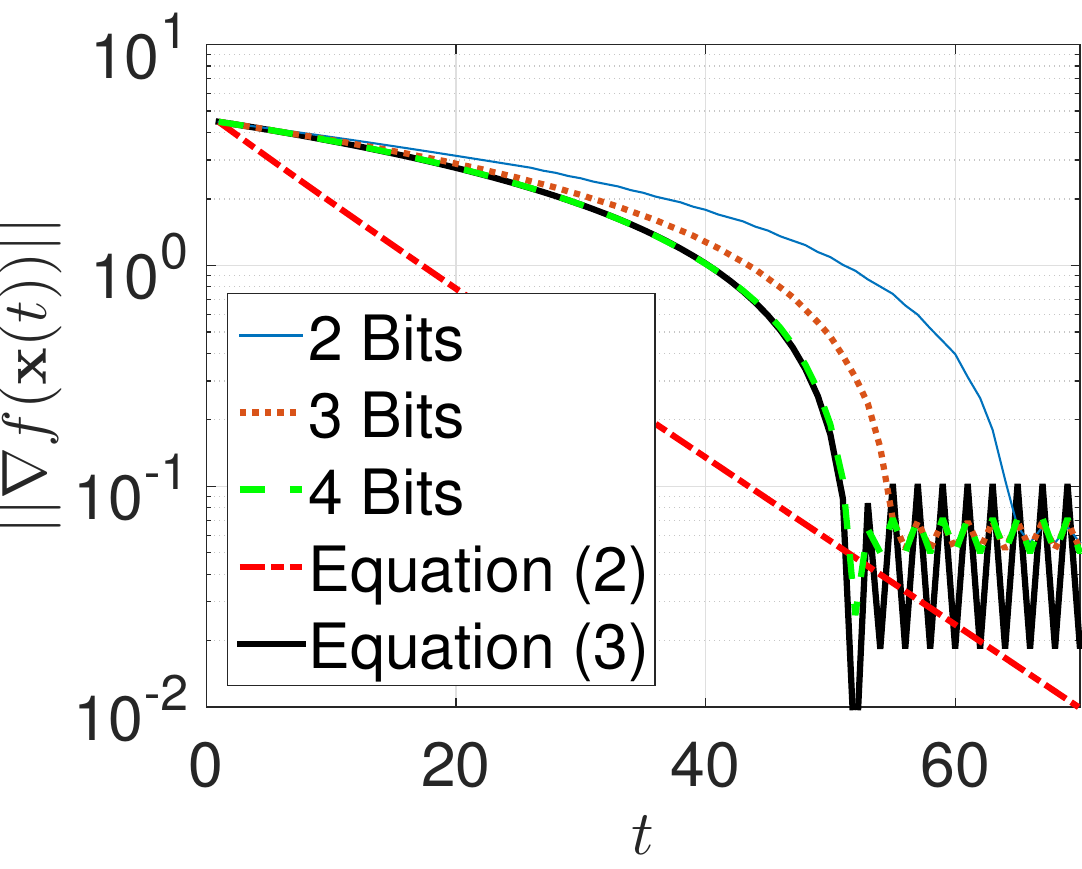}
        \caption{Dual gradient}
        \label{fig:TA-A} 
    \end{subfigure}~
    \begin{subfigure}[h]{0.5\columnwidth}
        \includegraphics[width=\textwidth]{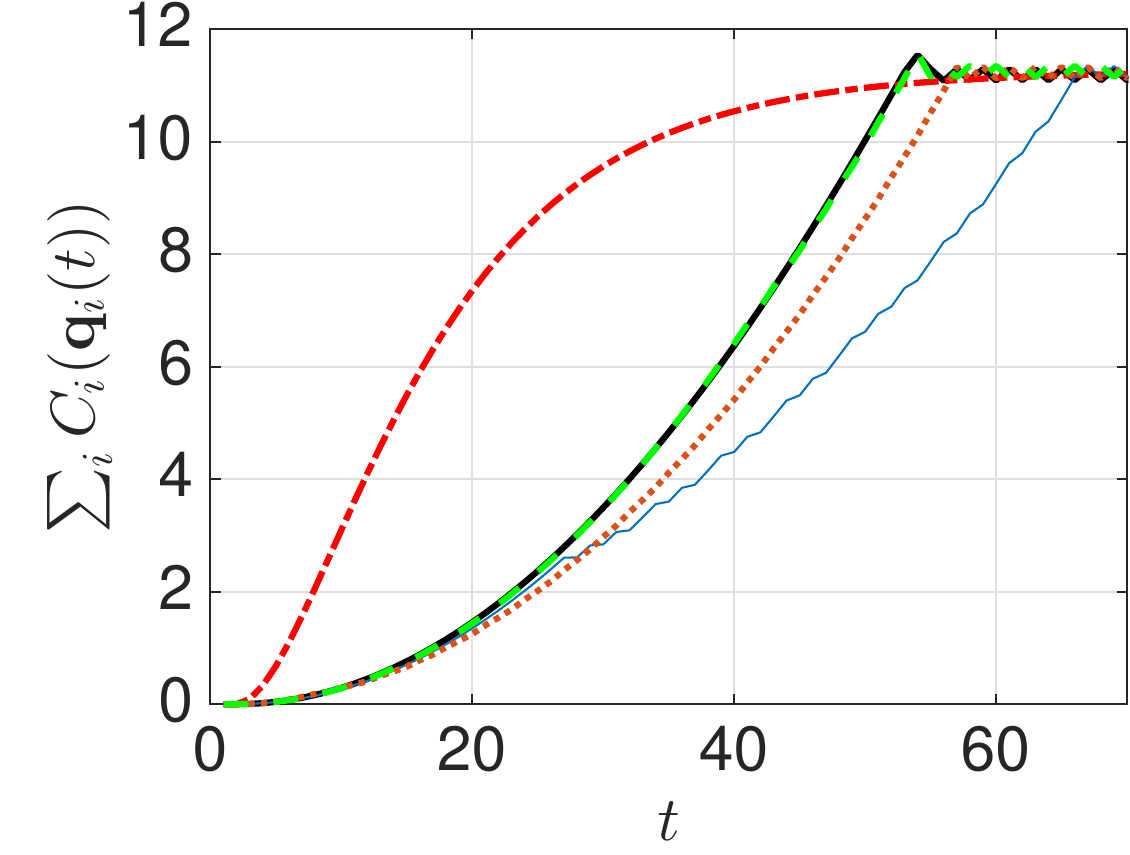}
        \caption{Objective function}
        \label{fig:TA-B}
    \end{subfigure} 
    \caption{The dual gradient (Fig.~\ref{fig:TA-A}) and the primal objective function value (Fig.~\ref{fig:TA-A}) at every iteration when $2$, $3$, and $4$ bits are communicated per iteration. The results are compared with the algorithms in Equations~\eqref{gradient_alg} and~\eqref{gradient_dir_alg}.}
    \label{fig:animals}
\end{figure}

  \section{Conclusions and future work} \label{sec:Conclusions}

     This paper studied gradient methods where the gradient direction is quantized at every iteration of the algorithm.     Such methods are of interest, for example, in distributed optimization where the gradient can often be measured but has to be communicated to accomplish the algorithm.
    An instance of such a procedure is dual decomposition, where primal problems that are scattered between different entities  are solved in a  distributed fashion by performing a gradient descent on the dual problem, see Section~\ref{sec:Related Background} for examples.   Our results show that a variant of the projected dual descent taking the sign of the dual gradients, i.e.,
    \begin{align} \label{eq:concl-signgrad} \vec{x}(t{+}1)=    \left[\vec{x}(t) - \gamma(t)~ \sign(\nabla f(\vec{x}(t)) \right]_{\mathcal{X}},\end{align}
 converges to the optimal solution under mild conditions on $f$ when $\mathcal{X}=\R^N$ and $\mathcal{X}=\R_+^N$, i.e., for dual problems associated with primal problems with equality and inequality constraints, respectively.  
 Therefore, when different entities maintain the components of the variable $\vec{x}$ then each entity only needs to broadcast one bit per iteration to ensure convergence to the optimal solution. 
    Our results also show that when a single entity maintains $\vec{x}$ then the minimal quantization has cardinality $N+1$; for smaller quantizations there exists an optimization problem that the quantized gradient methods cannot solve. 
  Therefore, only $\log_2(N+1)$ bits/iteration are communicated  instead of $N$ bits/iteration as when the components of $\vec{x}$ are maintained by different entities  in Equation~\eqref{eq:concl-signgrad}. 
     We also connect fineness of the quantization to the convergence rate of the algorithm to the available bandwidth (bits/iteration). 
   The convergence rate improves as the bandwidth is increased.

    Future work will consider how to additionally quantize the magnitude of the gradient to get a better trade-off between convergence rate and available bandwidth.
    Moreover, it is interesting to see if the results can be generalized to nonsmooth optimization problems.

  \appendices

\section{Proof of Theorem~\ref{prop:equivalenceBetweenPQandTC}} \label{App:prop:equivalenceBetweenPQandTC}
   \begin{IEEEproof}
Let us start by showing by contradiction that $\mathcal{D}$ being a proper quantization for the problem class $\mathcal{F}_L(\R^N)$ implies that there exists $\theta\in [0,\pi/2)$ such that $\mathcal{D}$ is a $\theta$-cover.
 Suppose there does not exists such $\theta$. 
  Then $\min_{\vec{a}\in\mathcal{S}^{N-1}} \max_{\vec{d}\in\mathcal{D}} \cos(\ang(\vec{a},\vec{d}))\leq 0$ since the function  $g(\vec{a})=\max_{\vec{d}\in\mathcal{D}} \cos(\ang(\vec{a},\vec{d}))=\max_{\vec{d}\in\mathcal{D}} \langle \vec{a},\vec{d}\rangle$ is continuous and $\mathcal{S}^{N-1}$ is compact. 
    Therefore, there exists $\vec{a}\in \mathcal{S}^{N-1}$ such that $\cos(\ang(\vec{a},\vec{d}))\leq 0$ for all $\vec{d}\in \mathcal{D}$.
  In particular, we have for all $\vec{d}\in \mathcal{D}$ that
    $\langle \vec{a},\vec{d} \rangle =  \cos(\ang(\vec{a},\vec{d})) \leq 0$.
  By choosing $\vec{x}(0)= \vec{a}$, using Iterations~\eqref{gradient_dir_quant} and Cauchy-Schwarz inequality we conclude that for all $t\in \N$ 
    \begin{align}
     ||\vec{x}(t)||\geq  \langle \vec{a}, \vec{x}(t)\rangle =&  \langle \vec{a}, \vec{a} \rangle -\sum_{i=0}^{t-1} \gamma(t)  \langle \vec{a}, \vec{d}(t)\rangle  
          \geq 1, \notag
    \end{align}
   where the inequality follows from the fact that $||\vec{a}||=1$ and  that for all  $\vec{d}\in \mathcal{D}$ we have 
  $\langle \vec{a},\vec{d}\rangle \leq 0$. 
  If we choose $f(\vec{x})=(L/2)||\vec{x}||^2$, then $f\in \mathcal{F}_L(\R^N)$  and 
 $f$ has the unique optimizer $\vec{x}^{\star}=\vec{0}$, but
   $ \dist(\vec{x}(t),\mathcal{X}^{\star}) = || \vec{x}(t) || \geq 1$,
  for all $t\in \N$. 
  Since $ \dist(\vec{x}(t),\mathcal{X}^{\star}) \geq 1$ for all $\vec{d}(t)\in \mathcal{D}$ and $\gamma(t){\in} \R_+$, we can conclude that $\mathcal{D}$ is not a proper quantization.

   The fact that $\mathcal{D}$ being a $\theta$-cover implies that $\mathcal{D}$ is a proper quantization, follows from Theorem~\ref{prop:dim_step_size} in Section~\ref{sec:Convergence:DimStep}, where we showed that for all $f\in \mathcal{F}_L(\R^N)$ we can choose $\vec{d}(t)\in \mathcal{D}$ and $\gamma(t) \in \R_+$ such that $\lim_{t\rightarrow \infty} \dist(\vec{x}(t),\mathcal{X}^{\star})=0$.
 \end{IEEEproof}
 
 \section{Proof of Theorem~\ref{Theorem:MinimalProperQuant}} \label{APP:Theorem:MinimalProperQuant}
  \begin{IEEEproof}
  First consider the case where either $|\mathcal{D}|<N$ or $|\mathcal{D}|=N$ and the elements of $\mathcal{D}$ are linearly dependent. 
 Then  $\Span(\mathcal{D})$ is a proper subspace of $\R^{N}$, so there exists a normal $\vec{a}\in \mathcal{S}^{N-1}$ such that $\cos(\ang(\vec{a}, \vec{d}))=\langle \vec{a},\vec{d} \rangle \leq 0$ for all $\vec{d}\in \Span(\mathcal{D})$. 
   Since $\mathcal{D}\subseteq \Span(\mathcal{D})$, $\mathcal{D}$ is not a $\theta$-cover for any $\theta\in [0,\pi/2)$ and the result  follows from Theorem~\ref{prop:equivalenceBetweenPQandTC}.

 Let us next consider the other case, where $|\mathcal{D}|=N$ and the vectors of $\mathcal{D}$ are linearly independent, i.e., $\Span(\mathcal{D})=\R^N$. 
 Define $\vec{D}\in\R^{N\times N}$ such that  for $i=1,\ldots, N$ row $i$ in $\vec{D}$ is the $i$-th elemnt of $\mathcal{D}$, where the elements have some arbitrary  order. 
 Then $\vec{D}$ is invertible and we can choose $\vec{a}=-\vec{D}^{-1} \vec{1}$ where $\vec{1}\in\R^N$ is a vector of all ones.
  Then we have for $i=1,\ldots,N$ that $\langle \vec{d}_i, \vec{a}\rangle = - \vec{d}_i \vec{D}^{-1} \vec{1}=-1$.
  Hence, as in the previous case,  we get that  $\langle \vec{a},\vec{d} \rangle \leq 0$ for all $\vec{d}\in \mathcal{D}$ implying that $\mathcal{D}$ can not be a $\theta$-cover for any $\theta\in[0,\pi/2)$, and the result follows from Theorem~\ref{prop:equivalenceBetweenPQandTC}.
 \end{IEEEproof}


   \section{Proof of Lemma~\ref{Lemma:OptCon}} \label{App:Lemma:OptCon}
   \begin{IEEEproof}
     Since $f$ and $\mathcal{X}$ are convex, $\vec{x}\in \mathcal{X}^{\star}$ if and only if the KKT optimality conditions hold for $\vec{x}$~\cite[Section 5.5]{convex_boyd}.
     It can be checked that since $\mathcal{X}=\R_+^N$ the optimal dual variable associated to $\vec{x}\in \mathcal{X}^{\star}$ is $\boldsymbol\lambda= \nabla f(\vec{x})$. Therefore, the KKT conditions reduce to the following three condition holding for all $i=1,\ldots,N$:
     (i) $\nabla_i f(\vec{x}) \vec{x}_i =0$, (ii) $\nabla_i f(\vec{x}) \geq 0$, and (iii) $\vec{x}_i\geq 0$.
     We now show both directions of the proof.

     First assume that $L(\vec{x})=0$. We show that (i), (ii), and (iii) hold, so $\vec{x}\in \mathcal{X}^{\star}$.
     We have $\vec{x}=\lceil \vec{x}-\alpha \nabla f(\vec{x})\rceil^+$ or $\vec{x}_i=\lceil \vec{x}_i-\alpha \nabla_i f(\vec{x})\rceil^+$, for $i=1,\ldots, N$.
     So (i) holds because if $\vec{x}_i\neq 0$ then $\nabla_i f(\vec{x})=0$ and if $\nabla_i f(\vec{x})\neq 0$ then $\vec{x}_i= 0$.
     Similarly, (ii) holds because  if $\vec{x}_i\neq 0$ then $\nabla_i f(\vec{x})=0$ and if $\vec{x}_i=0$ then $\nabla_i f(\vec{x})\geq 0$.
    Finally, (iii) holds because $\lceil \cdot \rceil^+$ is the projection to $\R_+$.
    
    Now assume $\vec{x}\in \mathcal{X}^{\star}$ so (i), (ii), and (iii) above hold. 
    If $\vec{x}_i=0$ for some $i=1,\ldots, N$ then $\vec{x}_i=\lceil \vec{x}_i-\alpha \nabla_i f(\vec{x})\rceil^+$ since by (ii) $\nabla_i f(\vec{x})\geq 0$.
    Otherwise, if $\vec{x}_i>0$, then $\nabla_i f(\vec{x})=0$ by (i) so $\vec{x}_i=\lceil \vec{x}_i-\alpha \nabla_i f(\vec{x})\rceil^+$.
  \end{IEEEproof}

 \section{Proof of Lemma~\ref{lemma:binary_feedback_descent_lemma-Pojected}} \label{App:lemma:binary_feedback_descent_lemma-Pojected}
 \begin{IEEEproof} 
 For all $\beta\in [0,1]$ we have
 \begin{align}
   \frac{\beta \epsilon }{ \sqrt{N}} & \leq  \frac{\beta  }{\sqrt{N}} ||\vec{x}- \lceil \vec{x}-\alpha \nabla f(\vec{x}) \rceil^+ || \label{eq:LSEQ1}\\
   & \leq  \beta  ||\vec{x}- \lceil \vec{x}-\alpha \nabla f(\vec{x}) \rceil^+ ||_{\infty}  \label{eq:LSEQ2} \\
   & = \beta  |\vec{x}_{i}- \lceil \vec{x}_{i}-\alpha \nabla_{i} f(\vec{x}) \rceil^+ |  \label{eq:LSEQ3} \\  
   & \leq   |\vec{x}_{i}{-} \lceil \vec{x}_{i}-\alpha \beta B \sign(  \nabla_{i}  f(\vec{x})) \rceil^+ |  \label{eq:LSEQ4} 
 \end{align}
 where, in Equation~\eqref{eq:LSEQ3},
      $i=\text{argmax}_j  |x_{j}(t)- \lceil x_{j}(t)-\alpha \nabla_{j} f(x(t)) \rceil^+ |$,
Equation~\eqref{eq:LSEQ1} comes from the bound in Equation~~\eqref{eq:LemmaProDec-OC},
Equation~\eqref{eq:LSEQ2} comes from the equivalence of the $2$- and $\infty$-norms,
Equation~\eqref{eq:LSEQ3} comes by using the definition of $i$,
Equation~\eqref{eq:LSEQ4} comes from Equations~\eqref{eq:LemPro-1} and~\eqref{eq:LemPro-2} in Lemma~\ref{Lemma:projections} in Appendix~\ref{App:AddLemmas} and that $||\nabla f||\leq B$. 
 By taking $\beta= \gamma/ (\alpha B \sqrt{N})$ in Equation~\eqref{eq:LSEQ4} we get  
 \begin{align}
    \frac{\epsilon \gamma}{ \alpha B N} \leq& 
      |\vec{x}_i - \left\lceil  \bar{\vec{x}}_i   \right\rceil^+ |, \label{eq:proofPro-it-bound-1} 
 \end{align}
 where
    $\bar{\vec{x}} = \vec{x}{-} (\gamma/\sqrt{N}) \sign(\nabla f(\vec{x}))$. 
  Moreover, from Equation~\eqref{eq:LSEQ3} and the nonexpansiveness of projections, see~\cite[Proposition B.11-c)]{nonlinear_bertsekas}, we have
 \begin{align}
      \frac{ \epsilon}{\alpha \sqrt{N}} \leq |\nabla_i f(\vec{x})|.  \label{eq:proofPro-nable-bound}
 \end{align}
 Therefore, we have 
   \begin{align}
      \langle \nabla f(\vec{x}),  \vec{x} {-}  \lceil \bar{\vec{x}} \rceil^+  \rangle =& 
      \sum_{j=1}^N  \nabla_j f(\vec{x})(\vec{x}_j {-}    \lceil \bar{\vec{x}}_j \rceil^+), \label{eq:proofPro-Inn-1} \\
       {\geq}&  \nabla_i f(\vec{x})(\vec{x}_i  {-}   \lceil \bar{\vec{x}}_i \rceil^+), \label{eq:proofPro-Inn-2} \\
       {\geq}&\frac{\epsilon^2 \gamma}{  \alpha^2 B N^{3/2}} , \label{eq:proofPro-Inn-3}
 \end{align}
 where Equation~\eqref{eq:proofPro-Inn-2} comes from the fact that every component of the sum is nonnegative, see Equation~\eqref{eq:LemPro-3}  in Lemma~\ref{Lemma:projections} in Appendix~\ref{App:AddLemmas}, and Equation~\eqref{eq:proofPro-Inn-3}  comes from using the bound in Equations~\eqref{eq:proofPro-it-bound-1} and~\eqref{eq:proofPro-nable-bound}.
  Inequality~\eqref{eq:proofPro-Inn-3} and the descent lemma~\cite[eq.~(2.1.6)]{Book_Nesterov_2004} yield
 \begin{align*}
    f(\lceil \bar{\vec{x}}\rceil^+) 
        {\leq} & f(\vec{x}) {-} \langle \nabla f(\vec{x}), \vec{x} {-} \lceil \bar{\vec{x}} \rceil^+  \rangle  
      {+} \frac{L}{2} || \vec{x} {-} \lceil \bar{\vec{x}} \rceil^+||^2, \\ 
    {\leq}& f(\vec{x}) -\frac{\epsilon^2 \gamma}{ \alpha^2 B N^{3/2}}  {+}  \frac{L}{2} \gamma^2 ,
 \end{align*}    
 where the last term comes from the fact that $|| \vec{x} {-} \lceil \bar{\vec{x}} \rceil^+||\leq \gamma$ by the non-expansiveness of the projection.
\end{IEEEproof}

 \section{Proof of Lemma~\ref{prop:fixed-stepsize-type-2-Proj}}
 \label{App:prop:fixed-stepsize-type-2-Proj}
 \begin{IEEEproof} 
  \textbf{Step 1:} We prove by induction that Equation~\eqref{inProp:SC-T2-a-Proj} holds for all $t\geq T$.
  When $t=T$ then Equation~\eqref{inProp:SC-T2-a-Proj}     holds because $f(\vec{x}(T))\leq \bar{F}_{\alpha}(\epsilon)$ 
  by definition of $\bar{F}_{\alpha}$, see Equation~\eqref{inProp:SC-T2-a3-Proj}.
  Now suppose Equation~\eqref{inProp:SC-T2-a-Proj} holds for  $t\geq T$. We will show that~\eqref{inProp:SC-T2-a-Proj} also holds for $t+1$.
  Consider first the case when $\vec{x}(t)\in \bar{\mathcal{X}}_{\alpha}(\epsilon)$.  
  Then from~\cite[eq.~(2.1.6)]{Book_Nesterov_2004}
  \begin{align*}
   f(\vec{x}(t{+}1)) \leq& f(\vec{x}(t))  {-} \langle \nabla f(\vec{x}(t)), \vec{x}(t) {-} \lceil \bar{\vec{x}}(t) \rceil^+  \rangle   \\ 
        &~~~~~~~~~~~~~~~
        {+} \frac{L}{2} || \vec{x}(t) {-} \lceil \bar{\vec{x}}(t) \rceil^+||^2, \\
        \leq& \bar{F}_{\alpha}(\epsilon) + \frac{L}{2} \bar{\gamma}^2
  \end{align*}
   where  $\bar{\vec{x}}(t) = \vec{x}(t)-  (\gamma(t)/\sqrt{N}) \sign(\nabla f(\vec{x}(t)))$,   
   the second inequality comes from the fact that (i) that $f(\vec{x}(t))\leq \bar{F}_{\alpha}(\epsilon)$ since $\vec{x}(t)\in \bar{\mathcal{X}}_{\alpha}(\epsilon)$,
   (ii)  that the inner product term is non-negative because every term of the sum [Eq.~\eqref{eq:proofPro-Inn-1}] is non-negative following Equation~\eqref{eq:LemPro-3}  in Lemma~\ref{Lemma:projections}, and (iii) that $|| \vec{x}(t) {-} \lceil \bar{\vec{x}}(t) \rceil^+||\leq \bar{\gamma}$ because of the non-expansiveness of the projection $\lceil \cdot \rceil^+$, see~\cite[Proposition B.11-c)]{nonlinear_bertsekas}.
 Otherwise, if  $\vec{x}(t)\notin \bar{\mathcal{X}}_{\alpha}(\epsilon)$ then  $f(\vec{x}(t+1))\leq f(\vec{x}(t))$  by  Lemma~\ref{lemma:binary_feedback_descent_lemma-Pojected}, yielding the result.

  \textbf{Step 2:} We will prove that there exists $\kappa>0$  such that (i) $\bar{\mathcal{X}}_{\alpha}(\epsilon)$ is bounded set and (ii) $\bar{F}_{\alpha}(\epsilon)<\infty$ for all $\epsilon\in [0,\kappa]$.
  Part~(i) follows directly from Lemma~\ref{lemma:bounded_Xeps} in Appendix~\ref{App:AddLemmas}.  
  To prove part (ii), note that $\bar{\mathcal{X}}_{\alpha}(\epsilon)$ is closed set and also bounded for all $\epsilon\in [0,\kappa]$ for some $\kappa>0$ from part (i).
 In particular,   $\bar{\mathcal{X}}_{\alpha}(\epsilon)$ is compact set for all $\epsilon\in [0,\kappa]$ so  the supremum  in~\eqref{inProp:SC-T2-a3-Proj}   is attained and hence  $\bar{F}_{\alpha}(\epsilon)<\infty$.

   \textbf{Step 3:} We will prove that $\lim_{\epsilon\rightarrow 0^+} \bar{F}_{\alpha}(\epsilon)= f^{\star}$.
    In particular, we show that $\bar{F}_{\alpha}$ is continuous at $0$ which implies the result, since $\bar{F}_{\alpha}(0)=f^{\star}$.
    Take any sequence $(\epsilon_k)_{k\in \N}$ in $\R_+$ such that $\lim_{k\rightarrow \infty} \epsilon_k = 0$. 
    Then there exists $K\in \N$ and a sequence $(\vec{x}(k))_{k\in \N}$ such that $f(\vec{x}(k))=\bar{F}_{\alpha}(\epsilon)$ holds for all $k\geq K$, since $\bar{\mathcal{X}}_{\alpha}(\epsilon)$ is compact for all $\epsilon \in [0,\kappa]$, where $\kappa$ is chosen as in \textbf{Step 2}. 
    Moreover, by the definition of $\bar{\mathcal{X}}_{\alpha}(\epsilon)$ we have that $\lim_{k\rightarrow \infty} L_{\alpha}(\vec{x}(k))=0$. 
    Now since $L_{\alpha}$ is a continuous function we can conclude that for every limit point $\bec{x}$ of $(\vec{x}(k))_{k\in \N}$ it holds that $L_{\alpha}(\bec{x})= 0$, i.e., $\bec{x} \in \mathcal{X}^{\star}$ or $f(\bec{x})=f^{\star}$.
    Since $f(\bec{x})=f^{\star}$ holds for every limit point of $(\vec{x}(k))_{k\in\N}$ and $f$ is continuous we can conclude that 
      $\lim_{k\rightarrow \infty} f(\vec{x}(k)) = \lim_{k\rightarrow \infty} \bar{F}_{\alpha}(\epsilon_k)= f^{\star}$.   
%
\end{IEEEproof}

\section{Proof of Theorem~\ref{prop:dim_step_size-Project}} \label{App:subseq:Pro-DSS}
\begin{IEEEproof} 
 \textbf{Step 1:} We will prove by contradiction that for any $\alpha>0$
 \begin{equation} \label{eq:LSEQ-liminf}
  I := \liminf_{t \rightarrow\infty } L_{\alpha}(\vec{x}(t)) = 0. 
 \end{equation}
 Suppose, to the contrary, that $I>0$. 
 Choose $T$ such that $L_{\alpha}(\vec{x}(t))   \geq I/2$ and 
   $ \gamma(t) < \min \{ 1, \allowbreak   I^2/(4 L \alpha^2 B N^{3/2})  \}$
 for all $t\geq T$. 
 Then by Lemma~\ref{lemma:binary_feedback_descent_lemma-Pojected} we get for all $t\geq T$
 \begin{align}
    f(\vec{x}(t{+}1)) {\leq}& f(\vec{x}(t)) -\frac{I^2 \gamma(t)}{4\alpha^2 B N^{3/2}}  +  \frac{ L}{2}\gamma(t)^2 \label{eq:LemmaPQGM-Lyp-1}\\
                  {=}& f(\vec{x}(t)) {-}  \frac{I^2 \gamma(t)}{8\alpha^2 B N^{3/2}} 
                      {+} \hspace{-0.1cm}  \left( \hspace{-0.1cm}  \gamma(t) {-}  \frac{I^2}{4 L \alpha^2 B N^{3/2}} \hspace{-0.1cm}  \right) \hspace{-0.13cm} \frac{L}{2} \gamma(t) \notag \\
                   {\leq}& f(\vec{x}(t)) -  \frac{I^2 \gamma(t)}{8 \alpha^2 B N^{3/2}}. \label{eq:LemmaPQGM-Lyp-3}
 \end{align}
 Since Equation~\eqref{eq:LemmaPQGM-Lyp-3} holds for all $t\geq T$ we obtain 
 \begin{align}
  f(\vec{x}(t)) \leq f(\vec{x}(T)){-} \frac{I^2 }{8\alpha^2 B N^{3/2}} \sum_{\tau=T}^{t-1} \gamma(\tau),~\text{for $t\geq T$}.  \label{eq:LemmaPQGM-Cont-sum}
 \end{align}
Since $\gamma(t)$ is non-summable Equation~\eqref{eq:LemmaPQGM-Cont-sum} implies that 
 $\lim_{t\rightarrow \infty} f(\vec{x}(t))=-\infty,$
which contradicts the fact that $\mathcal{X}^{\star}$ is non-empty.
Therefore we can conclude that $I=0$, [Eq.~\eqref{eq:LSEQ-liminf}].

\textbf{Step 2:} We will prove that $\lim_{t\rightarrow \infty } f(\vec{x}(t))=f^{\star}$. 
Let $\epsilon>0$ be given. 
Choose $\kappa>0$ such that $\bar{F}_{\alpha}(\kappa)<f^{\star} +\epsilon/2$, where $\bar{F}_{\alpha}(\kappa)$ is defined in Equation~\eqref{inProp:SC-T2-a3-Proj} of Lemma~\ref{prop:fixed-stepsize-type-2-Proj}, such $\kappa$ exists since $\lim_{\epsilon\rightarrow 0^+} F(\epsilon) = f^{\star}$.
 Now choose $T$ such that $\vec{x}(T)\in \bec{\mathcal{X}}_{\alpha}(\kappa)$ [Eq.~\eqref{eq:inProP-FSS-SC-Type-1-MCXep-Proj}] and for all $t\geq T$ it holds that $\gamma(t) \leq \bar{\gamma}:=\sqrt{\epsilon/L}$, such $T$ exists because of Equation~\eqref{eq:LSEQ-liminf}  and that $\lim_{t\rightarrow \infty} \gamma(t)=0$. 
 Then from Equation~\eqref{inProp:SC-T2-a-Proj} in Lemma~\ref{prop:fixed-stepsize-type-2-Proj} we have for all $t\geq T$ that
 $$ f(\vec{x}(t))-f^{\star}\leq \bar{F}_{\alpha}(\kappa)-f^{\star}+ \frac{L }{2} \bar{\gamma}^2 \leq \frac{\epsilon}{2}+\frac{\epsilon}{2} =\epsilon. $$

  \textbf{Step 3:} We will prove  that the sequence $\vec{x}(t)$ is bounded.
 Take $\kappa>0$ such that $\bar{F}_{\alpha}(\kappa)<\infty$, where $\bar{F}_{\alpha}$ is defined in Equation~\eqref{inProp:SC-T2-a3-Proj}, such $\kappa$ exists by Lemma~\ref{prop:fixed-stepsize-type-2-Proj}.   
 From Equation~\eqref{eq:LSEQ-liminf},  $\vec{x}(t)\in\bec{\mathcal{X}}_{\alpha}(\kappa)$ holds for infinitely many $t\in \N$.
 Let $\vec{x}(t_k)$, with $k\in \N$, be the subsequence of all $\vec{x}(t)\in\bar{\mathcal{X}}_{\alpha}(\kappa)$. 
  Choose $T\in\N$ such that $\gamma(t)\leq \epsilon^2/(L\alpha^2 B N^{3/2})$ for all $t\geq T$.
 Then, by following the same steps as used to obtain Equations~\eqref{eq:LemmaPQGM-Lyp-3} and~\eqref{eq:LemmaPQGM-Cont-sum} and using the fact that $ f(\vec{x}(t_k))\leq  \bar{F}_{\alpha}(\kappa)$, we have for every $k\in \N$ such that $t_k\geq T$  and all $t\in \N$ such that $t_k<t<t_{k+1}$ that 
 \begin{align*}
   f(\vec{x}(t)) 
     \leq& \bar{F}_{\alpha}(\kappa)-\frac{\epsilon^2}{2 \alpha^2 B N^{3/2}} \sum_{\tau=t_k}^{t} \gamma(\tau). 
 \end{align*}
 Therefore, since $f^{\star}\leq f(\vec{x}(t))$ we have that
 \begin{align} \label{eq:ThConDimBounded}
      \sum_{\tau=t_k}^{t_{k+1}} \gamma(t) \leq \frac{2 \alpha B N^{3/2}}{\epsilon^2} ( \bar{F}_{\alpha}(\kappa)-f^{\star}).
   \end{align}
  We also have from  Lemma~\ref{prop:fixed-stepsize-type-2-Proj} that $\bec{\mathcal{X}}_{\alpha}(\kappa)$ is bounded so there exists $A\in \R_+$ such that $||\vec{x}||\leq A$ for all $ \vec{x} \in\bec{\mathcal{X}}_{\alpha}(\kappa)$.
 As a result, for all $t\geq T$ and $k(t)=\max\{ k\in \N | t_k\leq t \}$ we get 
 \begin{align*}
    ||\vec{x}(t)||  \leq&  \sum_{\tau=t_{k(t)}}^{t-1} ||\vec{x}(\tau{+}1){-}\vec{x}(\tau)|| {+} ||\vec{x}(t_k)||   
                        {\leq} \sum_{\tau=t_{k(t)}}^t \gamma(\tau)  {+}A 
 \end{align*}
 where the first inequality comes by writing  $\vec{x}(t)$  as a telescoping series starting at $\vec{x}(t_k)$ together with the triangle inequality and
 the second inequality  comes from the relation
 $$||\vec{x}(t{+}1)-\vec{x}(t)||=\left|\left|\left\lceil \bar{\vec{x}}(t) \right\rceil^+ -\vec{x}(t)\right|\right|\leq \gamma(t)$$
 for all $t\in \N$, where $\bar{\vec{x}}(t) = \vec{x}(t)-  (\gamma(t)/\sqrt{N}) \sign(\nabla f(\vec{x}(t)))$. 
 Thus, from Equation~\eqref{eq:ThConDimBounded}, we can conclude that the sequence $\vec{x}(t)$ is bounded.

  \textbf{Step 4:} We will prove $\lim_{t\rightarrow \infty} \dist(\vec{x}(t),\mathcal{X}^{\star})=0$ by contradiction.
 Suppose that there exists $\epsilon>0$ and a subsequence $\vec{x}(t_k)$ such that $\dist(\vec{x}(t_k),\mathcal{X}^{\star})\geq \epsilon$ for all $k\in \N$.
  Then since $\vec{x}(t)$ is bounded, so we can without loss of generality restrict  $\vec{x}(t_k)$ to a convergent  subsequence to some point $\bar{\vec{x}}$, so $\lim_{k\rightarrow \infty} \vec{x}(t_k) = \bar{\vec{x}}$.
  Now since $f$ is continuous and $\lim_{t\rightarrow \infty} f(\vec{x}(t))=f^{\star}$ we can conclude that $f(\bar{\vec{x}})=f^{\star}$ and $\bar{\vec{x}}\in \mathcal{X}^{\star}$. 
 Then $\lim_{t\rightarrow \infty} \bar{\vec{x}}(t_k)=\vec{x} \in \mathcal{X}^{\star}$ contradicts that  $\dist(\vec{x}(t_k),\mathcal{X}^{\star})\geq \epsilon$ for all $k\in \N$.
\end{IEEEproof}


 \section{Proof of Lemma~\ref{lemma:binary_feedback_descent_lemma}} \label{App:lemma:binary_feedback_descent_lemma}
  \begin{IEEEproof}  
     By using that the gradients of $f$ are $L$-Lipschitz continuous, we can apply the descent lemma (see for example~\cite[eq.~(2.1.6)]{Book_Nesterov_2004} or~\cite[Proposition A.24]{nonlinear_bertsekas}). 
      The descent lemma states that for all $\gamma$ we have
      \begin{align}
         f(\vec{x}-\gamma \vec{d}) {\leq} & f(\vec{x}) - \langle \nabla f(\vec{x}), \vec{d}  \rangle \gamma 
                   {+} \frac{L}{2} ||\vec{d}||^2 \gamma^2,    \\
                           {=}&  f(\vec{x}) +\left(  \frac{L}{2}\gamma -\langle \nabla f(\vec{x}), \vec{d}(t)  \rangle    \right) \gamma,  \label{eq:prop1eq2}  \\
           \leq & f(\vec{x}) +\left(  \frac{L}{2}\gamma -\cos(\theta) \epsilon    \right) \gamma \label{eq:prop1eq3} \\
           =  & f(\vec{x}) - \delta(\epsilon,\gamma, \theta)
      \end{align}
     where Equation~\eqref{eq:prop1eq2} comes from that $||\vec{d}||{=}1$, Equation~\eqref{eq:prop1eq3} comes from that $\ang(  \nabla f( \vec{x}),\vec{d} ){\leq} \theta$, $||\nabla f(\vec{p})||{\geq} \epsilon$, since $\vec{x}\notin \mathcal{X}(\epsilon)$, and
   $  \langle \nabla f(\vec{x}), \vec{d}  \rangle  {=}   ||\nabla f(\vec{x})|| \cos(  \ang(\vec{d}, \nabla f(\vec{x})) )$. 
 \end{IEEEproof}

 \section{Proof of Lemma~\ref{prop:fixed-stepsize-type-2}} \label{App:prop:fixed-stepsize-type-2}
     \begin{IEEEproof}
   a)   The result can be proved using \textbf{Steps 1}, \textbf{2},  and \textbf{3} used to prove Lemma~\ref{prop:fixed-stepsize-type-2-Proj}, 
   using $||\nabla f(\cdot)||$, $\mathcal{X}(\epsilon)$, $F(\epsilon)$, and Lemma~\ref{lemma:binary_feedback_descent_lemma} in place of $L_{\alpha}(\cdot)$, $\bar{\mathcal{X}}_{\alpha}(\alpha)$, $\bar{F}_{\alpha}(\epsilon)$ and Lemma~\ref{lemma:binary_feedback_descent_lemma-Pojected}, respectively.

        b)
        For any $\vec{x} \in \mathcal{X}(\epsilon)$~\cite[eq.~(2.1.19) in Theorem 2.1.10]{Book_Nesterov_2004}
      \begin{align*}
        f(\vec{x}) \leq f^{\star}+ \frac{1}{2 \mu} ||\nabla f(\vec{x} )||^2 \leq  f^{\star}+ \frac{\epsilon^2}{2 \mu}, 
      \end{align*} 
     where we have used that $\nabla f(\vec{x}^{\star})=\vec{0}$ for all $\vec{x}^{\star}\in \mathcal{X}^{\star}$.    
    \end{IEEEproof}

\section{Proof of Theorem~\ref{prop:dim_step_size}} \label{App:prop:dim_step_size}
 \begin{IEEEproof} 
 The results can be proved using \textbf{Steps 1}, \textbf{2}, \textbf{3}, and \textbf{4} used to prove Theorem~\ref{prop:dim_step_size-Project}.
 The main difference is that here \textbf{Step 1} is to prove that $\liminf_{t\rightarrow \infty } ||\nabla f(\vec{x}(t))  ||= 0$, instead of $\liminf_{t\rightarrow \infty } L_{\alpha}(\vec{x}(t))= 0$ as in the proof of Theorem~\ref{prop:dim_step_size-Project}.
 Moreover, here we use  $||\nabla f(\cdot)||$, $\mathcal{X}(\epsilon)$, $F(\epsilon)$,  Lemma~\ref{lemma:binary_feedback_descent_lemma} and Lemma~\ref{prop:fixed-stepsize-type-2}-a) in place of $L_{\alpha}(\cdot)$, $\bar{\mathcal{X}}_{\alpha}(\alpha)$, $\bar{F}_{\alpha}(\epsilon)$, Lemma~\ref{lemma:binary_feedback_descent_lemma-Pojected}, and  Lemma~\ref{prop:fixed-stepsize-type-2-Proj}, respectively.
  \end{IEEEproof}

\section{Additional Lemmas} \label{App:AddLemmas} 
  \begin{lemma} \label{Lemma:Example1-longproofinappendix}
   Consider $\mathcal{D}$ defined in Equation~\eqref{eq:D:Examples:theta-cover-1} in Example~\ref{Examples:theta-cover-1} of Section~\ref{subsec:FLPQ}. $\mathcal{D}$ is a $\theta$-cover with the $\theta$ in Equation~\eqref{eq:inExample1_theta}. 
   \end{lemma}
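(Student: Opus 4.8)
The plan is to reduce the $\theta$-cover property to one elementary inequality and then settle it by an extremal argument. First I would record that for the set $\mathcal{D}$ of Equation~\eqref{eq:D:Examples:theta-cover-1} and any $\vec{g}\in\mathcal{S}^{N-1}$,
\[
   \max_{\vec{d}\in\mathcal{D}}\langle\vec{g},\vec{d}\rangle
   = \max\Big\{\vec{g}_1,\ldots,\vec{g}_N,\ -\tfrac{1}{\sqrt{N}}\textstyle\sum_{i=1}^{N}\vec{g}_i\Big\},
\]
because $\langle\vec{g},\vec{e}_i\rangle=\vec{g}_i$ and $\langle\vec{g},-\vec{1}/\sqrt{N}\rangle=-\tfrac{1}{\sqrt{N}}\sum_i\vec{g}_i$. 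Setting $c:=1/\sqrt{N^2+2\sqrt{N}(N-1)}$, which is positive (and $\le 1$, so $\theta=\cos^{-1}(c)\in[0,\pi/2)$ as Definition~\ref{defin:min_angle} requires), the assertion that $\mathcal{D}$ is a $\theta$-cover with $\cos(\theta)=c$ is equivalent to: every $\vec{g}\in\mathcal{S}^{N-1}$ has $\vec{g}_i\ge c$ for some $i$ or $\sum_{i=1}^{N}\vec{g}_i\le -c\sqrt{N}$.

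I would establish this by contradiction. Assume $\vec{g}\in\mathcal{S}^{N-1}$ with $\vec{g}_i<c$ for all $i$ and $\sum_i\vec{g}_i>-c\sqrt{N}$, and write $\vec{g}_i=c-a_i$ with $a_i>0$. The second assumption becomes $\sigma:=\sum_i a_i<c(N+\sqrt{N})=:L$, and expanding the unit-norm constraint,
\[
   1=\sum_{i=1}^{N}(c-a_i)^2=Nc^2-2c\sigma+\sum_{i=1}^{N}a_i^2\le Nc^2-2c\sigma+\sigma^2,
\]
where the last step uses $\sum_i a_i^2\le(\sum_i a_i)^2$ for nonnegative $a_i$. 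Now $\phi(s):=s^2-2cs$ is convex, hence maximized on $[0,L]$ at an endpoint; since $\phi(L)-\phi(0)=L(L-2c)=c^2(N+\sqrt{N})(N+\sqrt{N}-2)\ge 0$, the maximum is $\phi(L)$, and $\phi(\sigma)<\phi(L)$ for every $\sigma\in(0,L)$. Therefore $1<Nc^2+\phi(L)$, and it remains only to verify the algebraic identity
\[
   Nc^2+\phi(L)=c^2\big[N+(N+\sqrt{N})^2-2(N+\sqrt{N})\big]=c^2\big(N^2+2\sqrt{N}(N-1)\big)=1,
\]
which contradicts $1<1$.

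The one step that is not purely mechanical is recognizing that the worst direction concentrates all of $a_1,\dots,a_N$ on a single coordinate, i.e.\ is $\vec{g}\propto(-(N+\sqrt{N}-1),1,\ldots,1)$; this is exactly what makes $\phi(L)$ the relevant value, after which the identity $N+(N+\sqrt{N})^2-2(N+\sqrt{N})=N^2+2\sqrt{N}(N-1)$ is routine. The subtlety to keep in mind is that $\sigma<L$ must stay \emph{strict}, since it is the strict inequality $\sum_i\vec{g}_i>-c\sqrt{N}$ that rules out the extremizer itself and produces a strict contradiction; the edge case $N=1$ (where $\mathcal{D}=\{1,-1\}$, $c=1$) is trivial and is also handled by the same computation.
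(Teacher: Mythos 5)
Your proof is correct, and it takes a genuinely different route from the paper's. The paper argues directly rather than by contradiction: if no coordinate of the unit vector reaches $\cos\theta$, it orders the coordinates, bounds the contribution of the negative ones via the 1-norm/2-norm inequality $\sum_{i>K}|\vec{x}_i|\ge\sqrt{1-\sum_{i\le K}\vec{x}_i^2}$, then uses monotonicity in the positive coordinates and in their number $K$ (with $K\le N-1$) to reduce to the worst case, and closes with the identity $N^2+2\sqrt{N}(N-1)-(N-1)=\left((N-1)+\sqrt{N}\right)^2$. You instead phrase the cover property as a dichotomy, assume both alternatives fail, substitute $\vec{g}_i=c-a_i$, and collapse everything onto the single scalar $\sigma=\sum_i a_i$, so the problem becomes maximizing the strictly convex quadratic $\phi(\sigma)=\sigma^2-2c\sigma$ over $(0,L)$; the endpoint argument plus the identity $N+(N+\sqrt{N})^2-2(N+\sqrt{N})=N^2+2\sqrt{N}(N-1)$ yields the contradiction $1<1$. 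All the steps check out: the reduction to the dichotomy is exactly Definition~\ref{defin:min_angle} for this $\mathcal{D}$, the bound $\sum_i a_i^2\le\sigma^2$ is valid since the $a_i$ are positive, $\phi(L)\ge\phi(0)=0$ holds for all $N\ge1$, and strict convexity gives the needed strict interior inequality. In substance both arguments hinge on the same worst-case direction $\vec{g}\propto(-(N+\sqrt{N}-1),1,\ldots,1)$ and on a 1-norm versus 2-norm comparison (yours applied to the gaps $a_i$, the paper's to the negative coordinates), but your aggregation into $\sigma$ avoids the case analysis over the number of positive components and the associated monotonicity bookkeeping, and as a by-product exhibits the extremizer, showing the stated $\cos\theta$ is tight for this quantization; the paper's version is slightly more constructive in that it produces an explicit chain of lower bounds on the inner product with $-\vec{1}/\sqrt{N}$ without contradiction.
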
 
 \begin{IEEEproof}
  We show that for $\theta$ defined in Equation~\eqref{eq:inExample1_theta} it holds for any $\vec{x}\in \mathcal{S}^{N-1}$ that there exists $\vec{d}\in \mathcal{D}_1$ such that Equation~\eqref{defin:min_angle} holds.

  First consider the case where $\vec{x}_j {\geq} \cos(\theta)$ for some component $j$.
  Then  for $\vec{e}_j {\in} \mathcal{D}_1$ we get
  $\cos(\ang(\vec{x},\vec{e}_j)) = \langle \vec{x},\vec{e}_j \rangle=\vec{x}_j \geq \cos(\theta)$.
  Therefore, we finalize the proof by showing that if $\vec{x}\in \mathcal{S}^{N-1}$ and $\vec{x}_i \leq \cos(\theta)$ for $i=1,\ldots, N$ then 
  $$\cos\left(\ang\left(\vec{x},-\frac{1}{\sqrt{N}} \vec{1}\right)\right) = \frac{-1}{\sqrt{N}} \sum_{i=1}^N \vec{x}_i\geq \cos(\theta),$$
 Without loss of generality, let the components of $\vec{x}$ be ordered so that 
  $\vec{x}_i\geq 0$ if $i=1,\ldots, K$  and $\vec{x}_i< 0$  if   $i=K+1,\ldots, N$, 
  where $K$ is the number of positive components of $\vec{x}$.  
  Then 
  \begin{align}
     \frac{-1}{\sqrt{N}} \sum_{i=1}^N  \vec{x}_i \geq&  -\frac{1}{\sqrt{N}}\left(\sum_{i=1}^K\vec{x}_i - \sqrt{1- \sum_{i=1}^K\vec{x}_i^2  } \right) \label{eq:inlemmaEx1-1-a}\\
     \geq&   {-}\frac{1}{\sqrt{N}}\left(K \cos(\theta) - \sqrt{1- K \cos(\theta)^2  } \right),  \label{eq:inlemmaEx1-1-c}
  \end{align}
  where Equation~\eqref{eq:inlemmaEx1-1-a} comes by using that $\sum_{i=1}^N \vec{x}_i^2=1$ and the inequality between the $1$ and $2$ norm, i.e., 
  \begin{align} \notag
        \sum_{i=K+1}^N |\vec{x}_i| \geq \sqrt{ \sum_{i=K+1}^N \vec{x}_i^2} = \sqrt{ 1- \sum_{i=1}^K \vec{x}_i^2},
  \end{align} 
 and Equation~\eqref{eq:inlemmaEx1-1-c}  comes by noting that~\eqref{eq:inlemmaEx1-1-a} is decreasing and that $\vec{x}_i\leq \cos(\theta)$ for all $i$.
  Now, by inserting our choice of $\cos(\theta)$ from Equation~\eqref{eq:inExample1_theta} in Equation~\eqref{eq:inlemmaEx1-1-c} we get
  \begin{align}
         \frac{{-}1}{\sqrt{N}} \sum_{i=1}^N \vec{x}_i    
         {\geq} & {-} \frac{K{-}\sqrt{N^2{+}2\sqrt{N}(N{-}1){-}K}}{\sqrt{N}  \sqrt{N^2+2\sqrt{N}(N-1)}}  \label{eq:inlemmaEx1-2-a}\\
         \geq & {-}  \frac{N{-}1{-}\sqrt{N^2{+}2\sqrt{N}(N{-}1){-}(N{-}1)}}{\sqrt{N} \sqrt{N^2+2\sqrt{N}(N-1)}}   \label{eq:inlemmaEx1-2-b} \\
         = &   \frac{\sqrt{N}}{\sqrt{N} \sqrt{N^2+2\sqrt{N}(N-1)}}  ~~=~~\cos(\theta) \label{eq:inlemmaEx1-2-c} 
  \end{align}
  where the Equation~\eqref{eq:inlemmaEx1-2-b} comes from the fact that Equation~\eqref{eq:inlemmaEx1-2-a} is decreasing in $K$ and $K\leq N-1$, and the Equation~\eqref{eq:inlemmaEx1-2-c} comes by using that 
 $ N^2{+}2\sqrt{N}(N{-}1){-}(N{-}1) = ((N{-}1){+}\sqrt{N})^2.$
  \end{IEEEproof}
 
\begin{lemma}\label{Lemma:projections}
 For all $\beta \in [0,1]$, $z\in \R$ and $x,\alpha_1,\alpha_2 \in \R_+$  with $\alpha_1 \leq \alpha_2$ following holds 
 \begin{align}
  \beta |x-\lceil x-z\rceil^+ | &\leq |x- \lceil x- \beta z\rceil^+|, \label{eq:LemPro-1}\\
   |x-\lceil x-\alpha_1 z\rceil^+ | &\leq |x- \lceil x- \alpha_2 z\rceil^+|, \label{eq:LemPro-2} \\
   0 &\leq z(x-\lceil x-\alpha_1 z\rceil^+ ). \label{eq:LemPro-3} 
 \end{align}
\end{lemma}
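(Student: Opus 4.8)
The plan is to reduce all three inequalities to one explicit formula. Since $\lceil y\rceil^+=\max\{y,0\}$ and $x\ge 0$, for every $\alpha\ge 0$ and every $z\in\R$ we have the identity
\[
 x-\lceil x-\alpha z\rceil^+ \;=\; x-\max\{x-\alpha z,\,0\} \;=\; \min\{\alpha z,\,x\},
\]
where the last equality uses $\max\{a,b\}=-\min\{-a,-b\}$ together with $x\ge 0$. Writing $g(\alpha):=\min\{\alpha z,x\}$, the claims \eqref{eq:LemPro-1}--\eqref{eq:LemPro-3} become statements about $g$, and each follows from a two-case split on the sign of $z$.

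For \eqref{eq:LemPro-3}: if $z\ge 0$ then $\alpha_1 z\ge 0$ and $x\ge 0$, so $g(\alpha_1)\ge 0$ and hence $z\,g(\alpha_1)\ge 0$; if $z<0$ then $\alpha_1 z\le 0\le x$, so $g(\alpha_1)=\alpha_1 z$ and $z\,g(\alpha_1)=\alpha_1 z^2\ge 0$. For \eqref{eq:LemPro-2}: if $z\ge 0$ then $\alpha\mapsto\alpha z$ is nondecreasing, so $0\le g(\alpha_1)\le g(\alpha_2)$, and the absolute values inherit the same ordering; if $z<0$ then $g(\alpha_i)=\alpha_i z$ and $|g(\alpha_1)|=\alpha_1|z|\le\alpha_2|z|=|g(\alpha_2)|$.

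For \eqref{eq:LemPro-1}, the claim is $\beta|g(1)|\le|g(\beta)|$ with $\beta\in[0,1]$. If $z<0$, then $g(1)=z$ and $g(\beta)=\beta z$, so both sides equal $\beta|z|$. If $z\ge 0$, then $g(1)=\min\{z,x\}\ge 0$, and since $\beta\le 1$ we have $\beta\min\{z,x\}\le\beta z$ and $\beta\min\{z,x\}\le\beta x\le x$, so $\beta|g(1)|=\beta\min\{z,x\}\le\min\{\beta z,x\}=g(\beta)=|g(\beta)|$.

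I do not expect a genuine obstacle: the only point needing care is the identity $x-\lceil x-\alpha z\rceil^+=\min\{\alpha z,x\}$, which relies on $x\ge 0$ (this is precisely why the lemma restricts $x$ to $\R_+$), together with checking that $|\cdot|$ interacts correctly with the $\min$ in each sign regime. Once the identity is in hand, each inequality is a one-line case check. Equivalently, and avoiding the $\max$/$\min$ algebra, one can distinguish directly whether $x-\alpha z\ge 0$ (the projection acts as the identity, so $g(\alpha)=\alpha z$) or $x-\alpha z<0$ (the projection returns $0$, so $g(\alpha)=x$), and verify the same bounds in these two subcases.
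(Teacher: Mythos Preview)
Your proof is correct and follows essentially the same approach as the paper: compute $x-\lceil x-\alpha z\rceil^+$ explicitly and then verify each inequality by a case split on the sign of $z$ (equivalently, on whether the projection truncates). The only cosmetic difference is that the paper writes out piecewise closed forms $\phi_1,\phi_2$ depending on whether $x\ge z$ or $x\ge\beta z$, whereas you package the same computation into the single identity $x-\lceil x-\alpha z\rceil^+=\min\{\alpha z,x\}$, which makes the three verifications slightly more uniform.
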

\begin{IEEEproof}
   We first prove~\eqref{eq:LemPro-1}.
   Direct inspection shows that 
  \begin{align}
    \phi_1(x,z,\beta) :=
     \beta |x-\lceil x-z\rceil^+ |  &=\begin{cases} \beta |z| & \text{ if } x\geq z \\ \beta x & \text{ if } x \leq  z \end{cases} \label{eq:ClosedForm-1} \\
    \phi_2(x,z,\beta) :=
     |x-\lceil x- \beta z\rceil^+ | &=\begin{cases} \beta |z| & \text{ if } x\geq \beta z \\ x & \text{ if } x \leq \beta z. \end{cases} \label{eq:ClosedForm-2}
  \end{align}
  Therefore, for  $z\in \R_+$ we have $\phi_1(x,z,\beta)= \beta |z| = \phi_2(x,z,\beta)$  if  $x \in [z,\infty)$, 
  $\phi_1(x,z,\beta) = \beta x \leq \beta |z| =\phi_2(x,z,\beta)$  if $x \in [\beta z,z]$, 
   $\phi_1(x,z,\beta) = \beta x \leq x =\phi_2(x,z,\beta)$ if $x \in [0,\beta z]$.
  So $ \phi_1(x,z,\beta)\leq  \phi_2(x,z,\beta)$ for all $x,z\in \R_+$ and $\beta\in [0,1]$ which yields~\eqref{eq:LemPro-1}. 
  
  Equation~\eqref{eq:LemPro-2}  follows directly from using~\eqref{eq:ClosedForm-2}. 
%
     To prove~\eqref{eq:LemPro-3}, we use the fact that
     $\sign(z) \lceil x-\alpha_1 z\rceil^+ \leq \sign(z) x$
   or by rearranging 
    $0 \leq \sign(z) (x - \lceil x-\alpha_1 z\rceil^+).$
    By multiplying $|z|$ on both sides we obtain~\eqref{eq:LemPro-3}.
\end{IEEEproof}

 \begin{lemma}  \label{lemma:bounded_Xeps}
   Suppose $\mathcal{X}^{\star}$ is bounded. 
   Then:
    (i) There exists $\kappa{>}0$ such that $\mathcal{X}(\epsilon)$ defined in Equation~\eqref{eq:inProP-FSS-SC-Type-1-MCXep} is bounded for all $\epsilon {<}\kappa$. 
    (ii) If $||\nabla f(\vec{x})||{\leq} B$ for all $\vec{x}{\in} \R_+^N$, then there exists $\kappa{>}0$ such that $\bar{\mathcal{X}}_{\alpha}(\epsilon)$ in Equation~\eqref{eq:inProP-FSS-SC-Type-1-MCXep-Proj} is bounded for all $\epsilon {<}\kappa$.
 \end{lemma}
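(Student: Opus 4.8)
The plan is to derive both parts from two ingredients: (a) a coercivity fact — since $f$ is convex and $\mathcal{X}^{\star}$ is nonempty and bounded, the sublevel set $\mathcal{S}:=\{\vec{x}\in\mathcal{X} : f(\vec{x})\le f^{\star}+1\}$ is bounded — and (b) elementary first-order inequalities that bound $\|\nabla f(\vec{x})\|$, respectively $L_{\alpha}(\vec{x})$, from below by $f(\vec{x})-f^{\star}$ (up to a constant times the distance to a fixed optimizer). Since the sets in \eqref{eq:inProP-FSS-SC-Type-1-MCXep} and \eqref{eq:inProP-FSS-SC-Type-1-MCXep-Proj} are nondecreasing in $\epsilon$, it suffices to exhibit a single $\epsilon_{0}>0$ for which the relevant set is bounded and then take $\kappa=\epsilon_{0}$. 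Throughout, $\vec{x}^{\star}\in\mathcal{X}^{\star}$ is fixed and $\mathcal{X}\in\{\R^{N},\R_{+}^{N}\}$ is closed and convex.

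First I would establish (a). If $\mathcal{S}$ were unbounded, pick $\vec{z}_{n}\in\mathcal{S}$ with $\|\vec{z}_{n}\|\to\infty$; for every $t>0$ and every $n$ with $\|\vec{z}_{n}-\vec{x}^{\star}\|>t$, the point $\vec{x}^{\star}+t(\vec{z}_{n}-\vec{x}^{\star})/\|\vec{z}_{n}-\vec{x}^{\star}\|$ lies on the segment $[\vec{x}^{\star},\vec{z}_{n}]\subseteq\mathcal{X}$, and convexity bounds $f$ there by $f^{\star}+t/\|\vec{z}_{n}-\vec{x}^{\star}\|$. Passing to a subsequence along which the unit vectors $(\vec{z}_{n}-\vec{x}^{\star})/\|\vec{z}_{n}-\vec{x}^{\star}\|$ converge to some $\vec{u}$ (compactness of the sphere), continuity of $f$ and closedness of $\mathcal{X}$ give $f(\vec{x}^{\star}+t\vec{u})\le f^{\star}$, hence $\vec{x}^{\star}+t\vec{u}\in\mathcal{X}^{\star}$ for all $t\ge 0$ — a ray in $\mathcal{X}^{\star}$, contradicting boundedness. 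So $\mathcal{S}\subseteq\mathcal{B}^{N}(\vec{0},\rho)$ for some $\rho$. A one-dimensional restriction then upgrades this to a linear lower bound: with $g(t)=f(\vec{x}^{\star}+t(\vec{x}-\vec{x}^{\star}))$ (convex, minimized at $t=0$) and $\tau\in(0,1]$ the last time $g(\tau)\le f^{\star}+1$, the point $\vec{x}^{\star}+\tau(\vec{x}-\vec{x}^{\star})\in\mathcal{S}$ forces $\tau\le C/\|\vec{x}-\vec{x}^{\star}\|$ with $C:=\rho+\|\vec{x}^{\star}\|$, and convexity of $g$ gives $g(1)-g(0)\ge 1/\tau$ when $f(\vec{x})>f^{\star}+1$; the case $\vec{x}\in\mathcal{S}$ is immediate. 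Hence
\begin{equation}\label{eq:plan-growthlb}
   f(\vec{x})-f^{\star} \;\ge\; \frac{\|\vec{x}-\vec{x}^{\star}\|}{C} - 1 \qquad\text{for all } \vec{x}\in\mathcal{X}.
\end{equation}

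For part (i) ($\mathcal{X}=\R^{N}$), convexity gives the complementary upper bound $f(\vec{x})-f^{\star}\le\langle\nabla f(\vec{x}),\vec{x}-\vec{x}^{\star}\rangle\le\|\nabla f(\vec{x})\|\,\|\vec{x}-\vec{x}^{\star}\|$; together with \eqref{eq:plan-growthlb} this yields $\|\vec{x}-\vec{x}^{\star}\|\big(1/C-\|\nabla f(\vec{x})\|\big)\le 1$, so $\mathcal{X}(\epsilon)$ is contained in a ball as soon as $\epsilon<1/C$, and $\kappa=1/(2C)$ works. For part (ii) ($\mathcal{X}=\R_{+}^{N}$, $\|\nabla f\|\le B$), I would instead use the variational characterization of $\vec{x}^{+}:=\lceil\vec{x}-\alpha\nabla f(\vec{x})\rceil^{+}$ tested at $\vec{x}^{\star}$, which gives $\langle\nabla f(\vec{x}),\vec{x}^{+}-\vec{x}^{\star}\rangle\le\tfrac1\alpha\langle\vec{x}-\vec{x}^{+},\vec{x}-\vec{x}^{\star}\rangle\le\tfrac1\alpha L_{\alpha}(\vec{x})\|\vec{x}-\vec{x}^{\star}\|$; adding $\langle\nabla f(\vec{x}),\vec{x}-\vec{x}^{+}\rangle\le B\,L_{\alpha}(\vec{x})$ and using convexity, $f(\vec{x})-f^{\star}\le\langle\nabla f(\vec{x}),\vec{x}-\vec{x}^{\star}\rangle\le L_{\alpha}(\vec{x})\big(B+\|\vec{x}-\vec{x}^{\star}\|/\alpha\big)$. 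Feeding this into \eqref{eq:plan-growthlb} (now with the constant $C$ coming from the bound $\rho$ on $\mathcal{S}\subseteq\R_{+}^{N}$) again bounds $\|\vec{x}-\vec{x}^{\star}\|$ whenever $L_{\alpha}(\vec{x})<\alpha/(2C)$, so $\kappa=\alpha/(2C)$ suffices; note that by Lemma~\ref{Lemma:OptCon}, $\bar{\mathcal{X}}_{\alpha}(0)=\mathcal{X}^{\star}$, consistent with the claimed conclusion.

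The main obstacle is ingredient (a): deducing from mere boundedness of $\mathcal{X}^{\star}$ that the sublevel sets of $f$ over $\mathcal{X}$ are bounded. This is the only step requiring a genuine convex-analysis (recession-direction/compactness) argument — one must extract a convergent subsequence of the normalized directions $(\vec{z}_{n}-\vec{x}^{\star})/\|\vec{z}_{n}-\vec{x}^{\star}\|$ and check that the limiting ray stays in $\mathcal{X}$ (automatic here, since $\mathcal{X}$ is closed and convex and contains every segment $[\vec{x}^{\star},\vec{z}_{n}]$). Once \eqref{eq:plan-growthlb} is available, both parts are routine manipulations of the first-order inequalities above.
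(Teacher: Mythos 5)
Your proposal is correct, but it follows a genuinely different route from the paper's. The paper proves part (i) by a local-to-global gradient argument: it encloses $\mathcal{X}^{\star}$ in a sphere $\mathcal{S}^{N-1}(\vec{x}^{\star},R)$, uses compactness together with the co-coercivity inequality $\langle\nabla f(\vec{x}),\vec{x}-\vec{x}^{\star}\rangle\geq\|\nabla f(\vec{x})\|^{2}/L$ to obtain a positive constant on that sphere, and then propagates it outward along rays via monotonicity of $\nabla f$ and Cauchy--Schwarz, concluding $\|\nabla f(\vec{x})\|\geq\kappa_{1}/R$ outside the ball; part (ii) is then a six-step contradiction argument combining the KKT conditions for $\mathcal{X}=\R_{+}^{N}$, the bound $\|\nabla f\|\leq B$, a cosine estimate, and subsequence extractions in which coordinates are split according to whether they vanish. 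You instead derive both parts uniformly from coercivity: boundedness of $\mathcal{X}^{\star}$ gives bounded sublevel sets (your recession-ray argument is sound, including keeping the limiting ray inside the closed convex $\mathcal{X}$), which you upgrade to the linear growth bound $f(\vec{x})-f^{\star}\geq\|\vec{x}-\vec{x}^{\star}\|/C-1$; then the convexity inequality $f(\vec{x})-f^{\star}\leq\langle\nabla f(\vec{x}),\vec{x}-\vec{x}^{\star}\rangle$ settles (i), and the projection variational inequality tested at $\vec{x}^{\star}$, yielding $f(\vec{x})-f^{\star}\leq L_{\alpha}(\vec{x})\left(B+\|\vec{x}-\vec{x}^{\star}\|/\alpha\right)$, settles (ii); I checked both estimates and your thresholds $\kappa=1/(2C)$ and $\kappa=\alpha/(2C)$, and they work. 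What your route buys: it treats (i) and (ii) symmetrically, avoids the Lipschitz constant $L$ entirely and the KKT/subsequence machinery of the paper's proof of (ii), and gives explicit radius bounds for $\mathcal{X}(\epsilon)$ and $\bar{\mathcal{X}}_{\alpha}(\epsilon)$. What the paper's route buys is a threshold for (i) expressed through the minimum gradient norm on a sphere around $\mathcal{X}^{\star}$ (hence tied to $L$ and local geometry), and, in (ii), intermediate facts (a uniform lower bound on $\|\nabla f\|$ and a cosine bound away from $\mathcal{X}^{\star}$) that it reuses inside the same argument; as a proof of the lemma itself, your version is shorter and fully rigorous.
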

 \begin{IEEEproof} (i)  
   Take any $\vec{x}^{\star}\in \mathcal{X}^{\star}$ and choose $R>0$ so that $\mathcal{X}^{\star}\subseteq \mathcal{B}^N(\vec{x}^{\star},R)$. 
   Take $\kappa_1>0$ given by
   \begin{align} \label{inLemma:kappa}
       \kappa_1 = \frac{1}{L} ~\underset{\vec{x} \in \mathcal{S}^{N-1}(\vec{x}^{\star},R)}{\mbox{minimize}}~ ||\nabla f(\vec{x}) ||^2.
   \end{align}
   Note that such a $\kappa_1$ exists since $ \mathcal{S}^{N-1}(\vec{x}^{\star},R)$ is compact and $\kappa_1>0$ since   $ \mathcal{S}^{N-1}(\vec{x}^{\star},R) \cap \mathcal{X}^{\star}$ is empty.
   Moreover, using~\cite[(2.1.8) in Theorem 2.1.5]{Book_Nesterov_2004}, that $\nabla f(\vec{x}^{\star})=\vec{0}$, and~\eqref{inLemma:kappa}, we have for all $\vec{x}\in  \mathcal{S}^{N-1}(\vec{x}^{\star},R)$ that
    \begin{align} \label{inLemma:kappaineq}
     \langle \nabla f(\vec{x}), \vec{x}-\vec{x}^{\star}\rangle \geq (1/L) ||\nabla f(\vec{x})||^2 \geq \kappa_1.
   \end{align}

   We now show that for all $\vec{x}\in \R^N\setminus \mathcal{B}^N(\vec{x}^{\star},R)$ we have $||\nabla f(\vec{x})||\geq \kappa$, where $\kappa=\kappa_1 /R$.
   Take some $\vec{x}\in \R^N\setminus \mathcal{B}^N(\vec{x}^{\star},R)$ and let $\bec{x}$ denote the unique point in the intersection of the line segment $[\vec{x}^{\star},\vec{x}]$ and  $\mathcal{S}^{N-1}(\vec{x}^{\star},R)$, such a  $\bec{x}$ exists because $\vec{x}\in \R^N\setminus \mathcal{B}^N(\vec{x}^{\star},R)$ and $\vec{x}^{\star} \in \mathcal{B}^N(\vec{x}^{\star},R)$.
   Consider now the function $G:[0,\infty) \rightarrow \R^N$ with
   \begin{align} \label{eq:defin_G}
      G(\tau) = \nabla f(\vec{x}^{\star}+ \tau (\bec{x}-\vec{x}^{\star})).
   \end{align}
   Clearly, $G(0)=\nabla f(\vec{x}^{\star})=\vec{0}$, $G(1)=\nabla f(\bec{x})$ and there exists $\hat{\tau}\geq 1$ such that $G(\hat{\tau})=\nabla f(\vec{x})$. 
   By using that gradients of convex functions are monotone, i.e., for all $\vec{x}_1,\vec{x}_2\in \R^N$ it holds that
   $\langle \nabla f(\vec{x}_1)-\nabla f(\vec{x}_2), \vec{x}_1-\vec{x}_2\rangle \geq 0$,
   we conclude that  for  $\tau_1, \tau_2\in \R_+$ with $\tau_1\geq \tau_2 $ it holds that $\langle G(\tau_1)-G(\tau_2),(\tau_1-\tau_2) (\bec{x}-\vec{x}^{\star}) \rangle \geq 0$.
   Rearranging  this,
   \begin{align}\label{inLemma:kappaineq2}
         \langle G(\tau_1), (\bec{x}-\vec{x}^{\star}) \rangle \geq  \langle G(\tau_2), (\bec{x}-\vec{x}^{\star}) \rangle,~~\text{ for } \tau_1\geq \tau_2. 
   \end{align}
    By  combining~\eqref{inLemma:kappaineq} and~\eqref{inLemma:kappaineq2} we get that
      \begin{align} \label{eq:lemma-kappa3}
         \langle G(\hat{\tau}), (\bec{x}-\vec{x}^{\star}) \rangle \geq  \langle G(1), (\bec{x}-\vec{x}^{\star}) \rangle \geq \kappa_1.  
   \end{align}
   Hence, by  the Cauchy-Schwarz inequality we have
       $|| \nabla f(\vec{x})|| R =   || G(\hat{\tau})|| R \geq \kappa_1$
   and by rearranging we get $|| \nabla f(\vec{x})||  \geq \kappa_1/R =\kappa$. 
   Since $|| \nabla f(\vec{x})||\geq \kappa$ holds for all $\vec{x}\in  \R^N\setminus \mathcal{B}^N(\vec{x}^{\star},R)$ we
   can conclude that  $\mathcal{X}(\epsilon)$ is bounded for $\epsilon <\kappa$.

 (ii) 
 We prove the result by contradiction. Suppose $\bar{\mathcal{X}}_{\alpha}(\kappa)$ is unbounded for all $\kappa>0$. 
    Then there exists a sequence $\vec{x}^k\in \R_+^N$  such that $\lim_{k\rightarrow \infty}||\vec{x}^k||=\infty$ and $\lim_{k\rightarrow \infty}L_{\alpha}(\vec{x}^k)=0$. 
    We prove the contraction in the following steps: 

  \textbf{Step 1:} We will show that there exists $\bar{\kappa}>0$ and $R$ such that $ || \nabla f(\vec{x})|| \geq \bar{\kappa}$ holds for all $\vec{x}\in \R_+^N$  and $||\vec{x}-\vec{x}^{\star}||\geq R$. 
  If there exists $\vec{x}^{\star}\in \mathcal{X}^{\star}$ such that $||\nabla f(\vec{x}^{\star})||=0$, then the result follows from part (i).
   Therefore, without loss of generality, suppose we can take $\vec{x}^{\star}\in \mathcal{X}^{\star}$ with $||\nabla f(\vec{x}^{\star})||>0$. 
 Then the set   $\mathcal{J}:=\{j =1,\ldots, N, |\nabla_j f(\vec{x}^{\star}) \neq 0 \}$ is nonempty. 
  We also have, using the KKT conditions~\cite[Section 5.9.2]{convex_boyd}, that  $\vec{x}\in \mathcal{X}^{\star}$ if and only if the following three conditions hold (A) $\vec{x}\in \R_+^N$, (B) $\nabla_i f(\vec{x})\geq 0$ for $i=1,\ldots, N$, and (C) $\nabla_i f(\vec{x})\vec{x}_i=0$ for $i=1,\ldots, N$.~\footnote{The Lagrangian multiplier associated with $\vec{x}\in \mathcal{X}^{\star}$ is $\boldsymbol \lambda{=} \nabla f(\vec{x})$.}

  We first show that $\langle \nabla f(\vec{x}),\vec{x}-\vec{x}^{\star}\rangle>0$ for all $\vec{x} \in \R_{+}^N\setminus \mathcal{X}^{\star}$. 
  Consider first the case when $\vec{x} \in \R_{+}^N\setminus \mathcal{X}^{\star}$ and $\vec{x}_j>0$ for some $j\in \mathcal{J}$.
  Then we have  
  \begin{align*}  
   \langle \nabla f(\vec{x}), \vec{x}-\vec{x}^{\star}\rangle\geq  \langle \nabla f(\vec{x}^{\star}), \vec{x}-\vec{x}^{\star}\rangle 
     \geq  \sum_{i=1}^N \nabla_i f(\vec{x}^{\star}) \vec{x}_i  >0,
  \end{align*}
  where the first inequality comes by the monotonicity of $\nabla f$, the second  inequality comes by the optimality condition (C), 
   and the final inequality comes by the optimality condition (B), the fact that $\nabla_i f(\vec{x}^{\star})>0$ for all $j\in \mathcal{J}$, and that $\vec{x}_j>0$ for some $j\in \mathcal{J}$. 
   Consider next the case when $\vec{x} \in \R_{+}^N\setminus \mathcal{X}^{\star}$ and $\vec{x}_j=0$ for all $j\in \mathcal{J}$. 
   Then $\nabla_i f(\vec{x})\neq \nabla_i f(\vec{x}^{\star})$ for some $i$, because otherwise the optimality conditions (A), (B), and (C) hold for $\vec{x}$ so $\vec{x}\in \mathcal{X}^{\star}$.
   In particular, $||\nabla f(\vec{x})-\nabla f(\vec{x}^{\star})||>0$.  
   Therefore, we have~\cite[eq.~(2.1.8)]{Book_Nesterov_2004} 
   \begin{align*}
     \langle \nabla f(\vec{x}), \vec{x}{-}\vec{x}^{\star}\rangle{\geq}  \langle \nabla f(\vec{x}^{\star}), \vec{x}{-}\vec{x}^{\star}\rangle 
    {+} \frac{1}{L} ||\nabla f(\vec{x}){-}\nabla f(\vec{x}^{\star}) ||^2{>}0,
   \end{align*}
   where the final inequality comes by that $ \langle \nabla f(\vec{x}^{\star}), \vec{x}-\vec{x}^{\star}\rangle\geq 0$ for all $\vec{x}\in \R_+^N$ and that $||\nabla f(\vec{x})-\nabla f(\vec{x}^{\star})||>0$.

  Now take $R>0$ such that $\mathcal{X}^{\star}\subseteq \mathcal{B}^N(\vec{x}^{\star},R)$.  
  Then since $\mathcal{S}^{N-1}(\vec{x}^{\star},R)\cap \R_+^N$ is compact, there exists 
   $\kappa_1= \min_{\vec{x}\in \mathcal{S}^{N-1}(\vec{x}^{\star},R)\cap \R_+^N}   \langle \nabla f(\vec{x}), \vec{x}-\vec{x}^{\star}\rangle>0$.
 We can now follow  same arguments as in the proof of part (i) to show that $||\nabla f(\vec{x})||\geq \bar{\kappa}$ where $\bar{\kappa}=\kappa_1/R$.

 \textbf{Step 2:} We will show that  the following inequality holds for all $\vec{x}\in \mathcal{R}_+^N\setminus \mathcal{B}^N(\vec{x}^{\star},R)$,
 \begin{align} \label{eq:lastlemma-cosineq}
      \cos(\ang(\nabla f(\vec{x}),\vec{x}-\vec{x}^{\star})) \geq \frac{\bar{\kappa}}{B}, 
 \end{align}
 where $R$ and $\bar{\kappa}$ are defined as in \textbf{Step 1}.
Take some $\vec{x}\in \R_+^N\setminus \mathcal{B}^N(\vec{x}^{\star},R)$. 
 Similarly as in part (i), let $\bar{\vec{x}}$ denote the unique point in the intersection of the line segment $[\vec{x}^{\star},\vec{x}]$ and  $\mathcal{S}^{N-1}(\vec{x}^{\star},R)$. 
 Moreover, take $\hat{\tau}>1$ such that $\vec{x}=\vec{x}^{\star}+\hat{\tau}(\bar{\vec{x}}-\vec{x}^{\star})$ and
 define $G:[0,\infty)\rightarrow \R^N$ as in Equation~\eqref{eq:defin_G}.
 Then by rearranging~\eqref{eq:lemma-kappa3} and multiplying both sides with $1/R$  
 \begin{align} \notag
      \cos(\ang(\nabla f(\vec{x}),\vec{x}-\vec{x}^{\star})) \geq \frac{\kappa_1}{R} \frac{1}{||G(\hat{\tau})||}\geq \frac{\bar{\kappa}}{B},
 \end{align}
 where $\kappa_1$ and $\bar{\kappa}$ are defined as in part (i) and \textbf{Step 1}.

  \textbf{Step 3:} We will show that the subsequence $\vec{x}^k$ can be restricted so that (a) $\lim_{k\rightarrow \infty}  \nabla f(\vec{x}^k) = \vec{f}$ for some $\vec{f}\in\R^N$ and (b) for each component $i=1,\ldots,N$ either $\lim_{k\rightarrow \infty} \vec{x}_i^k=0$ or $\vec{x}_i\geq W$, for some $W>0$.
 We first show (a). Since  $\nabla f$ is bounded by $B$,
  the sequence $\nabla f(\vec{x}^k)$ is bounded. 
  Therefore, we can restrict the sequence $\vec{x}^k$ so that  $\nabla f(\vec{x}^k)$ is a convergent subsequence with $\lim_{k\rightarrow \infty}  \nabla f(\vec{x}^k) = \vec{f}$. 
  To show (b),
 for each component $i=1,\ldots, N$ we restrict the sequence $\vec{x}^k$ so that $\vec{x}_i^k\geq W_i$ if $\vec{x}_i^k$ does not converge to $0$ 
 and taking $W=\max W_i$. 

     \textbf{Step 4:}   
       We will prove that $\vec{f}_i=0$ for $i\notin\mathcal{I}:=\{i=1,\ldots,N \big| \lim_{k\rightarrow \infty} \vec{x}_i^k=0\}.$. We prove the result by contradiction. 
       Without loss of generality, suppose $\vec{f}_i>0$ for some  $i\notin\mathcal{I}$,  the case when $\vec{f}_i<0$ follows same arguments. 
       Then there exists $K\in \N$ such that $\nabla_i f(\vec{x}^k)\geq \eta_0:= \vec{f}_i/2>0$ for all $k\geq K$.
       This, together with that   $\vec{x}_i^k\geq W$ implies that
           $|\vec{x}_i^k{-}\lceil \vec{x}_i^k{-}\nabla_i f(\vec{x}^k)\rceil_+| {\geq} \min \{ \eta_0,W\}=: \eta{>}0$.
       Therefore, $L_{\alpha}(\vec{x}^k)\geq \eta$ for all $k\geq K$, contradicting that $\lim_{t\rightarrow \infty} L_{\alpha}(\vec{x}^k)=0$.
       
     \textbf{Step 5:} We will prove contradiction when $\mathcal{I}$ 
     is empty.
     From \textbf{Step 4}, we have $\vec{f}=\vec{0}$.    
      However, we also have from \textbf{Step 1}  that $||\nabla f(\vec{x})||\geq \bar{\kappa}$ for all $\vec{x}\in  \R_+^N \setminus \mathcal{B}^N(\vec{x}^{\star},R)$. 
        Since  $\mathcal{B}^N(\vec{x}^{\star},R)$ is bounded and $\lim_{t\rightarrow \infty}||\vec{x}(t)||=\infty$, 
        we have that $||\vec{f}||\geq \bar{\kappa}>0$, which contradicts that $\vec{f}=\vec{0}$.

     \textbf{Step 6:} We will prove contradiction when $\mathcal{I}$ is nonempty. 
     Consider the sequence
      $ \vec{z}^k=(\vec{x}^k{-}\vec{x}^{\star})/(||\vec{x}^k{-}\vec{x}^{\star}||)$. 
     Since $\vec{z}^k$ is bounded, we can restrict the subsequence $\vec{x}^k$ so that $\vec{z}^k$ has a convergent subsequence, with the limit $\vec{z}$. 
    We have $\vec{z}_i{=}0$ for $i\in \mathcal{I}$ and $||\vec{z}||{=}1$, since $||\cdot||$ is continuous function and $\vec{z}^k$ convergent sequence.
 Therefore, as both $\nabla f(\vec{x}^k)$ and $\vec{z}^k$ are convergent sequences and the inner product $\langle \cdot, \cdot \rangle$ is continuous function, the sequence  $\langle \nabla f(\vec{x}^k), \vec{z}^k \rangle$ is convergent and has the limit $ \langle \vec{f}, \vec{z} \rangle = 0$. 
 However, for all $\vec{x}^k\in \mathcal{R}_+^N\setminus \mathcal{B}^N(\vec{x}^{\star},R)$
 \begin{align}
  \langle \nabla f(\vec{x}^k), \vec{z}^k \rangle {=} ||\nabla f(\vec{x}^k)|| \cos(\ang(\nabla f(\vec{x}^k), \vec{z}^k))
      {\geq} \frac{\bar{\kappa}^2}{B}, \notag
 \end{align}
 where the inequality comes by Equation~\eqref{eq:lastlemma-cosineq} in \textbf{Step 2}.
 \end{IEEEproof}

\bibliography{refs}

\begin{thebibliography}{10}
\providecommand{\url}[1]{#1}
\csname url@samestyle\endcsname
\providecommand{\newblock}{\relax}
\providecommand{\bibinfo}[2]{#2}
\providecommand{\BIBentrySTDinterwordspacing}{\spaceskip=0pt\relax}
\providecommand{\BIBentryALTinterwordstretchfactor}{4}
\providecommand{\BIBentryALTinterwordspacing}{\spaceskip=\fontdimen2\font plus
\BIBentryALTinterwordstretchfactor\fontdimen3\font minus
  \fontdimen4\font\relax}
\providecommand{\BIBforeignlanguage}[2]{{%
\expandafter\ifx\csname l@#1\endcsname\relax
\typeout{** WARNING: IEEEtran.bst: No hyphenation pattern has been}%
\typeout{** loaded for the language `#1'. Using the pattern for}%
\typeout{** the default language instead.}%
\else
\language=\csname l@#1\endcsname
\fi
#2}}
\providecommand{\BIBdecl}{\relax}
\BIBdecl

\bibitem{Galli2011grid}
S.~Galli, A.~Scaglione, and Z.~Wang, ``For the grid and through the grid: The
  role of power line communications in the smart grid,'' \emph{Proceedings of
  the IEEE}, vol.~99, no.~6, pp. 998--1027, 2011.

\bibitem{Fischione_2011}
C.~Fischione, ``Fast-lipschitz optimization with wireless sensor networks
  applications,'' \emph{IEEE Transactions on Automatic Control}, vol.~56,
  no.~10, pp. 2319--2331, Oct 2011.

\bibitem{partan2007survey}
J.~Partan, J.~Kurose, and B.~N. Levine, ``A survey of practical issues in
  underwater networks,'' \emph{ACM SIGMOBILE Mobile Computing and
  Communications Review}, vol.~11, no.~4, pp. 23--33, 2007.

\bibitem{bertsekas1992data}
D.~P. Bertsekas, R.~G. Gallager, and P.~Humblet, \emph{Data networks}.\hskip
  1em plus 0.5em minus 0.4em\relax Prentice-Hall International New Jersey,
  1992, vol.~2.

\bibitem{Duris_2016}
G.~Durisi, T.~Koch, and P.~Popovski, ``Toward massive, ultrareliable, and
  low-latency wireless communication with short packets,'' \emph{Proceedings of
  the IEEE}, vol. 104, no.~9, pp. 1711--1726, Sept 2016.

\bibitem{Kelly_1998}
F.~P. Kelly, A.~K. Maulloo, and D.~K.~H. Tan,
  ``\BIBforeignlanguage{English}{Rate control for communication networks:
  Shadow prices, proportional fairness and stability},''
  \emph{\BIBforeignlanguage{English}{The Journal of the Operational Research
  Society}}, vol.~49, no.~3, pp. pp. 237--252, 1998.

\bibitem{low_1999}
S.~Low and D.~Lapsley, ``Optimization flow control. {I}. basic algorithm and
  convergence,'' \emph{IEEE/ACM Transactions on Networking}, vol.~7, no.~6, pp.
  861--874, Dec 1999.

\bibitem{Palomar_2006}
D.~Palomar and M.~Chiang, ``A tutorial on decomposition methods for network
  utility maximization,'' \emph{IEEE Journal on Selected Areas in
  Communications}, vol.~24, no.~8, pp. 1439--1451, Aug 2006.

\bibitem{Palomar_2007}
------, ``Alternative distributed algorithms for network utility maximization:
  Framework and applications,'' \emph{IEEE Transactions on Automatic Control},
  vol.~52, no.~12, pp. 2254--2269, Dec 2007.

\bibitem{li2011optimal}
N.~Li, L.~Chen, and S.~H. Low, ``Optimal demand response based on utility
  maximization in power networks,'' in \emph{Power and Energy Society General
  Meeting, 2011 IEEE}, 2011, pp. 1--8.

\bibitem{chen2010two}
L.~Chen, N.~Li, S.~H. Low, and J.~C. Doyle, ``Two market models for demand
  response in power networks,'' \emph{IEEE SmartGridComm}, vol.~10, pp.
  397--402, 2010.

\bibitem{Madan_2006}
R.~Madan and S.~Lall, ``Distributed algorithms for maximum lifetime routing in
  wireless sensor networks,'' \emph{IEEE Transactions on Wireless
  Communications}, vol.~5, no.~8, pp. 2185--2193, Aug 2006.

\bibitem{Zhao_2014}
C.~Zhao, U.~Topcu, N.~Li, and S.~Low, ``Design and stability of load-side
  primary frequency control in power systems,'' \emph{IEEE Transactions on
  Automatic Control}, vol.~59, no.~5, pp. 1177--1189, May 2014.

\bibitem{DallAnese_2016}
E.~Dall'Anese, S.~V. Dhople, and G.~B. Giannakis, ``Photovoltaic inverter
  controllers seeking ac optimal power flow solutions,'' \emph{IEEE
  Transactions on Power Systems}, vol.~31, no.~4, pp. 2809--2823, July 2016.

\bibitem{Lue_1987}
J.~N. Tsitsiklis and Z.~Q. Luo, ``Communication complexity of convex
  optimization,'' \emph{Journal of Complexity}, vol.~3, pp. 231–--243, 1987.

\bibitem{Rabbat_2005}
M.~Rabbat and R.~Nowak, ``Quantized incremental algorithms for distributed
  optimization,'' \emph{IEEE Journal on Selected Areas in Communications},
  vol.~23, no.~4, pp. 798--808, April 2005.

\bibitem{Nedic_2008_cdc}
A.~Nedic, A.~Olshevsky, A.~Ozdaglar, and J.~Tsitsiklis, ``Distributed
  subgradient methods and quantization effects,'' in \emph{IEEE Conference on
  Decision and Control (CDC)}, Dec 2008, pp. 4177--4184.

\bibitem{pu2015quantization}
Y.~Pu, M.~N. Zeilinger, and C.~N. Jones, ``Quantization design for
  unconstrained distributed optimization,'' in \emph{American Control
  Conference (ACC)}.\hskip 1em plus 0.5em minus 0.4em\relax IEEE, 2015, pp.
  1229--1234.

\bibitem{yi2014quantized}
P.~Yi and Y.~Hong, ``Quantized subgradient algorithm and data-rate analysis for
  distributed optimization,'' \emph{IEEE Transactions on Control of Network
  Systems}, vol.~1, no.~4, pp. 380--392, 2014.

\bibitem{Herdtner_2000}
J.~Herdtner and E.~Chong, ``Analysis of a class of distributed asynchronous
  power control algorithms for cellular wireless systems,'' \emph{IEEE Journal
  on Selected Areas in Communications}, vol.~18, no.~3, pp. 436--446, March
  2000.

\bibitem{Nedic_2001}
A.~Nedic and D.~P. Bertsekas, ``Incremental subgradient methods for
  nondifferentiable optimization,'' \emph{SIAM Journal on Optimization},
  vol.~12, no.~1, pp. 109--138, 2001.

\bibitem{Nedic_2009}
A.~Nedic and A.~Ozdaglar, ``Distributed subgradient methods for multi-agent
  optimization,'' \emph{IEEE Transactions on Automatic Control}, vol.~54,
  no.~1, pp. 48--61, Jan 2009.

\bibitem{Magnusson_2016}
S.~Magn\'{u}sson, K.~Heal, C.~Enyioha, N.~Li, C.~Fischione, and V.~Tarokh,
  ``Convergence of limited communications gradient methods,'' in \emph{American
  Control Conference (ACC)}.\hskip 1em plus 0.5em minus 0.4em\relax IEEE, 2016.

\bibitem{Enyioha_2016}
C.~Enyioha, S.~Magn\'usson, K.~Heal, N.~Li, C.~Fischione, and V.~Tarokh,
  ``Robustness analysis for an online decentralized descent power allocation
  algorithm,'' in \emph{IEEE Information Theory and Applications Workshop
  (ITA)}, February 2016.

\bibitem{Magnusson_2016_2}
S.~Magn\'usson, C.~Enyioha, K.~Heal, N.~Li, C.~Fischione, and V.~Tarokh,
  ``Distributed resource allocation using one-way communication with
  applications to power networks,'' in \emph{IEEE Annual Conference on
  Information Sciences and Systems (CISS)}, March 2015, pp. 1--6.

\bibitem{nonlinear_bertsekas}
D.~P. Bertsekas, \emph{Nonlinear Programming: 2nd Edition}.\hskip 1em plus
  0.5em minus 0.4em\relax Athena Scientific, 1999.

\bibitem{Book_Shor_1985}
N.~Z. Shor, \emph{Minimization Methods for Non-Differentiable Functions}.\hskip
  1em plus 0.5em minus 0.4em\relax Springer, 1985.

\bibitem{Wei_2013}
E.~Wei, A.~Ozdaglar, and A.~Jadbabaie, ``A distributed {N}ewton method for
  network utility maximization {I}: Algorithm,'' \emph{IEEE Transactions on
  Automatic Control}, vol.~58, no.~9, pp. 2162--2175, Sept 2013.

\bibitem{Wei_2013_2}
------, ``A distributed {N}ewton method for network utility maximization {II}:
  Convergence,'' \emph{IEEE Transactions on Automatic Control}, vol.~58, no.~9,
  pp. 2176--2188, Sept 2013.

\bibitem{Beck_2014}
A.~Beck, A.~Nedic, A.~Ozdaglar, and M.~Teboulle, ``An {O}(1/k) gradient method
  for network resource allocation problems,'' \emph{IEEE Transactions on
  Control of Network Systems}, vol.~1, no.~1, pp. 64--73, March 2014.

\bibitem{nedic2009approximate}
A.~Nedic and A.~Ozdaglar, ``Approximate primal solutions and rate analysis for
  dual subgradient methods,'' \emph{SIAM Journal on Optimization}, vol.~19,
  no.~4, pp. 1757--1780, 2009.

\bibitem{convex_boyd}
S.~Boyd and L.~Vandenberghe, \emph{Convex Optimization}.\hskip 1em plus 0.5em
  minus 0.4em\relax New York, NY, USA: Cambridge University Press, 2004.

\bibitem{Ghadimi_2013}
E.~Ghadimi, I.~Shames, and M.~Johansson, ``Multi-step gradient methods for
  networked optimization,'' \emph{IEEE Transactions on Signal Processing},
  vol.~61, no.~21, pp. 5417--5429, Nov 2013.

\bibitem{Zargham_2014}
M.~Zargham, A.~Ribeiro, A.~Ozdaglar, and A.~Jadbabaie, ``Accelerated dual
  descent for network flow optimization,'' \emph{IEEE Transactions on Automatic
  Control}, vol.~59, no.~4, pp. 905--920, April 2014.

\bibitem{Wyner_1967}
A.~D. Wyner, ``Random packings and coverings of the unit n-sphere,'' \emph{The
  Bell System Technical Journal}, vol.~46, no.~9, pp. 2111--2118, Nov 1967.

\bibitem{sole1991covering}
P.~Sol{\'e}, ``The covering radius of spherical designs,'' \emph{European
  journal of combinatorics}, vol.~12, no.~5, pp. 423--431, 1991.

\bibitem{Book_Nesterov_2004}
Y.~Nesterov, \emph{Introductory Lectures on Convex Optimization}.\hskip 1em
  plus 0.5em minus 0.4em\relax Springer, 2004.

\end{thebibliography}
\bibliographystyle{IEEEtran}

\begin{IEEEbiography}[{\includegraphics[width=1in,height=1.25in,clip,keepaspectratio]{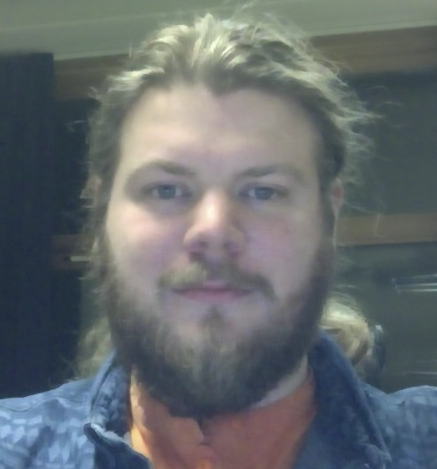}}]{Sindri Magn\'usson}
received the B.Sc. degree in Mathematics from University of Iceland, Reykjav\'ik Iceland, in 2011, the Masters degree in Mathematics from KTH Royal Institute of Technology, Stockholm Sweden, in 2013, and the PhD in Electrical Engineering from the same institution, in 2017. He is currently a postdoctoral researcher at the Department of Electrical Engineering at KTH Royal Institute of Technology, Stockholm, Sweden. He was a visiting researcher at Harvard University, Cambridge, MA, for 9 months in 2015 and 2016. His research interests include distributed optimization, both theory and applications.
\end{IEEEbiography}

  \begin{IEEEbiography}[{\includegraphics[width=1in,height=1.25in,clip,keepaspectratio]{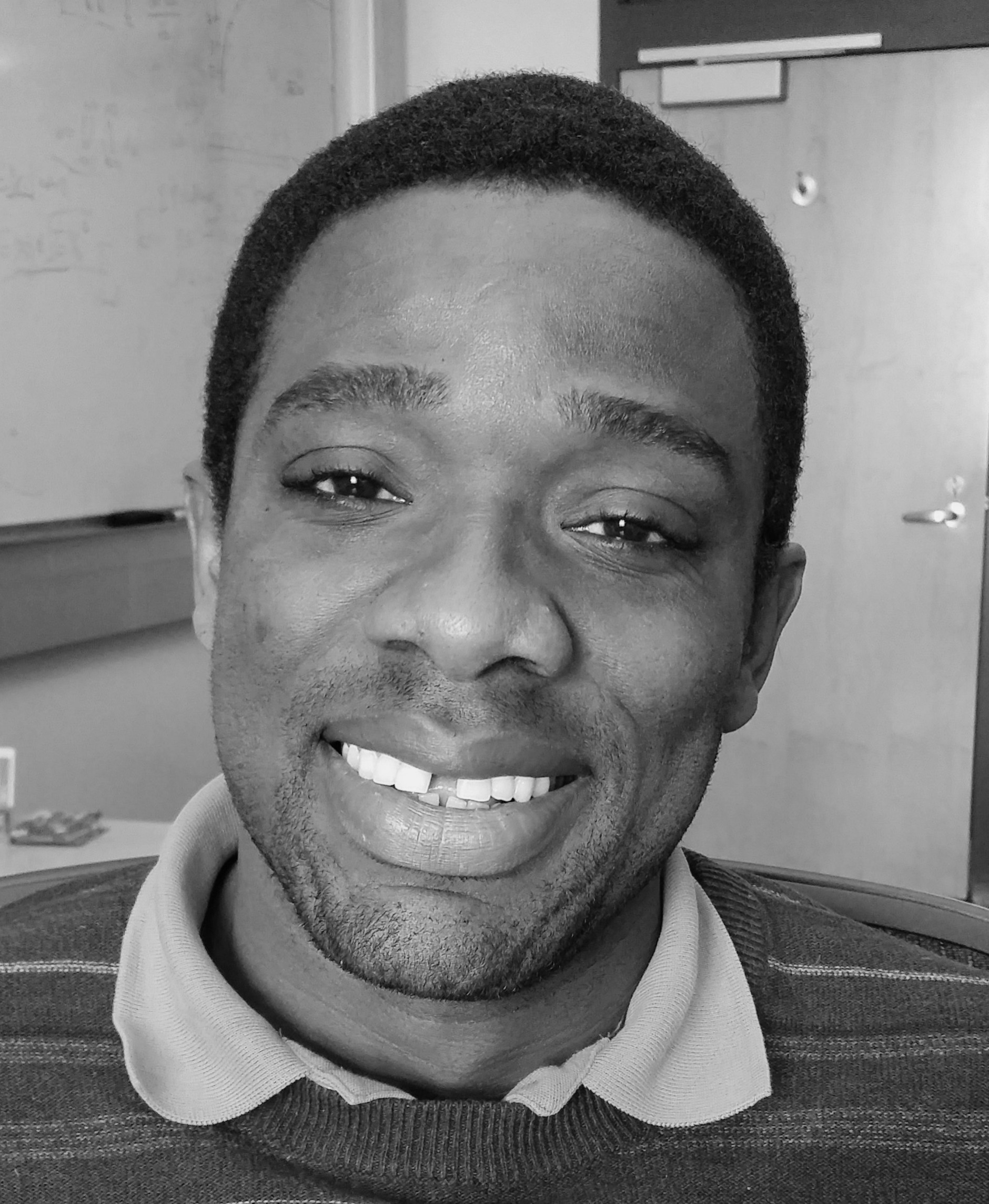}}]
{Chinwendu Enyioha} 
 received the B.Sc. degree in Mathematics from Gardner-Webb University (GWU), Boiling Springs, NC,  and the PhD degree in Electrical and Systems Engineering from the University of Pennsylvania, Philadelphia, PA, in 2014. He is currently a Postdoctoral Research Fellow in the School of Engineering and Applied Sciences at Harvard.  Prior to arriving Harvard, he was a Postdoctoral Researcher in the GRASP Lab at the University of Pennsylvania. Dr. Enyioha is a Fellow of the Ford Foundation, was named a William Fontaine Scholar at the University of Pennsylvania and has received the Mathematical Association of America Patterson award. His research lies in the area of design, optimization and control of distributed networked systems, with applications to power systems and Cyber-Physical networks.
\end{IEEEbiography}

 \begin{IEEEbiography}[{\includegraphics[width=1in,height=1.25in,clip,keepaspectratio]{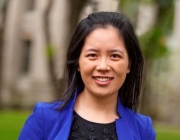}}]{Na Li} 
  received her B.S. degree in mathematics and applied mathematics from Zhejiang University in China and her PhD degree in Control and Dynamical systems from the California Institute of Technology in 2013. She is an Assistant Professor in the School of Engineering and Applied Sciences in Harvard University. She was a postdoctoral associate of the Laboratory for Information and Decision Systems at Massachusetts Institute of Technology. She was a Best Student Paper Award finalist in the 2011 IEEE Conference on Decision and Control.    
 She received NSF CAREER Award in 2016 and Air Force Young Investigator Award in 2017.
  Her research lies in the design, analysis, optimization and control of distributed network systems, with particular applications to power networks and systems biology/physiology.
\end{IEEEbiography}

 \begin{IEEEbiography}[{\includegraphics[width=1in,height=1.25in,clip,keepaspectratio]{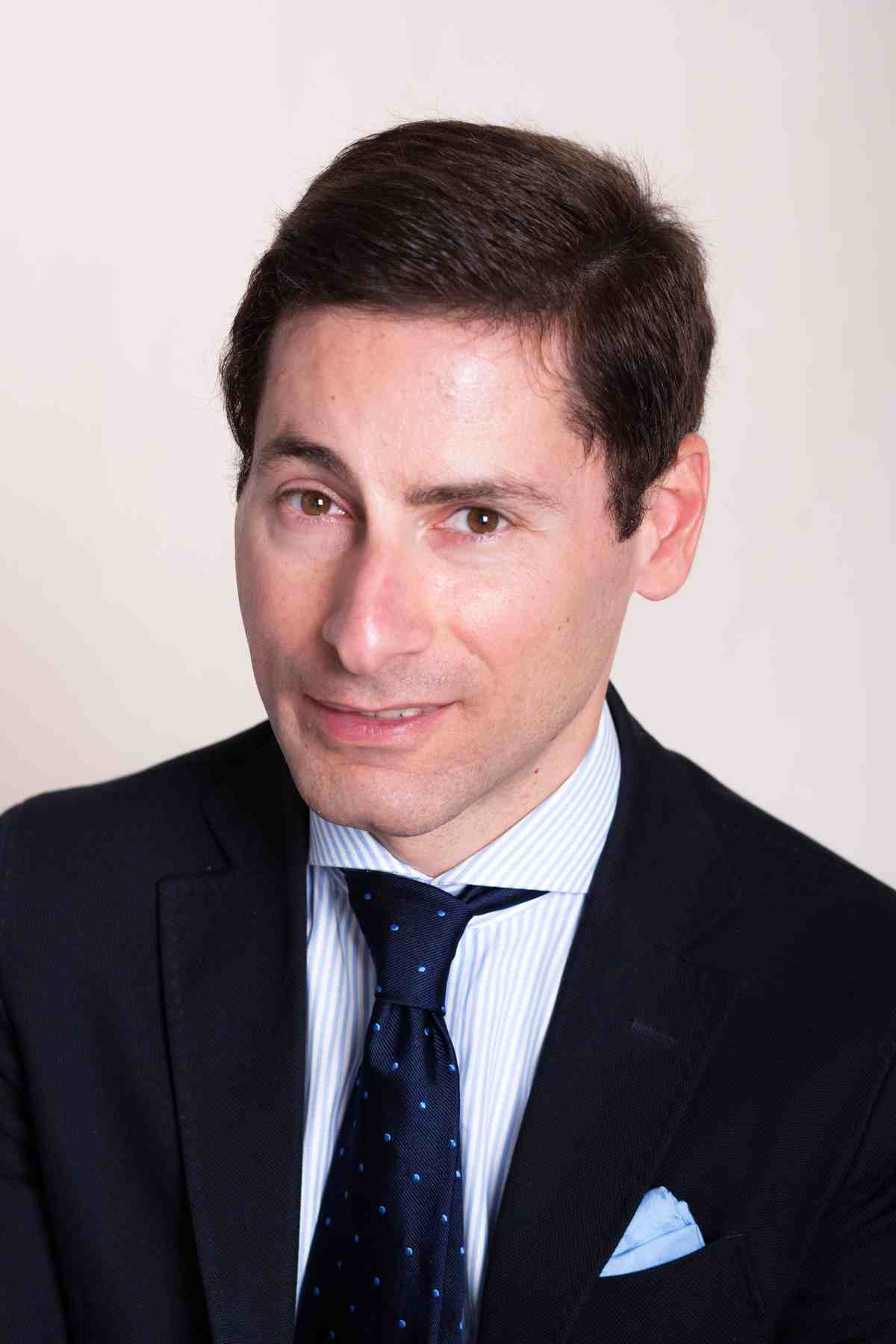}}]{Carlo Fischione} 
 is currently a tenured Associate Professor at KTH Royal Institute of Technology, Electrical Engineering, Stockholm, Sweden. He received the Ph.D. degree in Electrical and Information Engineering (3/3 years) in May 2005 from University of L’Aquila, Italy, and the Laurea degree in Electronic Engineering (Laurea, Summa cum Laude, 5/5 years) in April 2001 from the same University. He has held research positions at Massachusetts Institute of Technology, Cambridge, MA (2015, Visiting Professor); Harvard University, Cambridge, MA (2015, Associate); University of California at Berkeley, CA (2004-2005, Visiting Scholar, and 2007-2008, Research Associate). His research interests include optimization with applications to wireless sensor networks, networked control systems, wireless networks, security and privacy. He received or co-received a number of awards, including the best paper award from the IEEE Transactions on Industrial Informatics (2007).  He is Member of IEEE (the Institute of Electrical and Electronic Engineers), and Ordinary Member of DASP (the academy of history Deputazione Abruzzese di Storia Patria).
\end{IEEEbiography} 

 \begin{IEEEbiography}[{\includegraphics[width=1in,height=1.25in,clip,keepaspectratio]{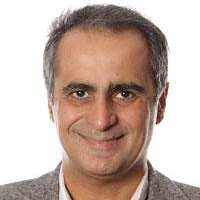}}]
{Vahid Tarokh}
 received the PhD in electrical engineering from the University of Waterloo, Ontario, Canada. He then worked at AT\&T Labs-Research and AT\&T wireless services until August 2000, where he was the head of the Department of Wireless Communications and Signal Processing.

In September 2000, he joined the Department of Electrical Engineering and
Computer Sciences (EECS) at MIT as an associate professor. In June 2002,
he joined Harvard University, where he is a Professor of Applied Mathematics.
He has received a Guggenheim Fellowship in Applied Mathematics and holds 4 honorary degrees.
\end{IEEEbiography} 



\end{document}